\documentclass[letterpaper,reqno,11pt]{amsart}

\usepackage[T1]{fontenc}
\usepackage[utf8]{inputenc}
\usepackage{lmodern}

\usepackage[margin=1.5in]{geometry}

\usepackage{amsmath,amsthm}
\allowdisplaybreaks[3]
\usepackage{mathtools}
\let\coloneq\coloneqq
\let\eqcolon\eqqcolon
\usepackage{amssymb}
\usepackage{dsfont}
\usepackage{mathrsfs}
\usepackage{esint}

\usepackage{enumitem}

\usepackage{hyperref}
\hypersetup{colorlinks=false, hypertexnames=false}
\usepackage[nocompress]{cite}

\counterwithin{equation}{section}

\theoremstyle{plain}
\newtheorem{thm}{Theorem}[section]
\newtheorem{prop}[thm]{Proposition}
\newtheorem{lemma}[thm]{Lemma}
\newtheorem{cor}[thm]{Corollary}

\theoremstyle{definition}
\newtheorem{defn}[thm]{Definition}

\theoremstyle{remark}
\newtheorem{remark}[thm]{Remark}

\newcommand{\les}{\lesssim}

\renewcommand{\le}{\leqslant}
\renewcommand{\ge}{\geqslant}

\newcommand{\rd}{\mathrm{d}}
\newcommand{\re}{\mathrm{e}}
\newcommand{\ri}{\mathrm{i}}
\newcommand{\rt}{\mathrm{t}}
\newcommand{\cD}{\mathscr{D}}
\newcommand{\cE}{\mathscr{E}}
\newcommand{\cG}{\mathscr{G}}
\newcommand{\cN}{\mathscr{N}}
\newcommand{\cR}{\mathscr{R}}
\newcommand{\cS}{\mathscr{S}}
\newcommand{\cT}{\mathscr{T}}
\newcommand{\NN}{\mathbb{N}}
\newcommand{\ZZ}{\mathbb{Z}}
\newcommand{\QQ}{\mathbb{Q}}
\newcommand{\RR}{\mathbb{R}}
\renewcommand{\SS}{\mathbb{S}}
\newcommand{\TT}{\mathbb{T}}
\newcommand{\eps}{\varepsilon}
\newcommand{\1}{\mathds{1}}

\newcommand{\id}{\mathrm{id}}

\DeclareMathOperator{\Law}{Law}
\DeclareMathOperator{\tr}{\mathrm{tr}}
\DeclareMathOperator{\divergence}{\mathrm{div}}
\DeclareMathOperator{\EE}{\mathbb{E}}
\DeclareMathOperator{\var}{\mathrm{var}}
\DeclareMathOperator{\cov}{\mathrm{cov}}

\renewcommand{\hat}{\widehat}
\renewcommand{\bar}{\overline}
\renewcommand{\tilde}{\widetilde}
\newcommand{\norm}[2]{\lVert #1\rVert_{#2}}
\newcommand{\innerprod}[3]{\langle #1,#2 \rangle_{#3}}

\newcommand{\WP}{\mathrm{WP}}

\title[Gaussian measures under NSE and heat flows are mutually singular]{Gaussian measures under the Navier--Stokes and heat flows are mutually singular}
\author{Niko Nikov \and Leonardo Tolomeo}

\address{
Niko Nikov\\ School of Mathematics\\
The University of Edinburgh\\
and The Maxwell Institute for the Mathematical Sciences\\ James Clerk Maxwell Building\\ The King's Buildings\\ Peter Guthrie Tait Road\\ Edinburgh\\
EH9 3FD\\
United Kingdom}
\email{n.a.nikov@sms.ed.ac.uk}

\address{
Leonardo Tolomeo\\ School of Mathematics\\
The University of Edinburgh\\
and The Maxwell Institute for the Mathematical Sciences\\ James Clerk Maxwell Building\\ The King's Buildings\\ Peter Guthrie Tait Road\\ Edinburgh\\
EH9 3FD\\
United Kingdom}
\email{l.tolomeo@ed.ac.uk}

\subjclass[2020]{Primary 35Q30, 76D05; Secondary 35R60, 60G15, 60G30, 28C20}

\keywords{Navier--Stokes equations, Gaussian measures, transport of measures, quasi-invariance, singularity of measures, random initial data}

\date{}

\begin{document}

\begin{abstract}
We consider the incompressible Navier--Stokes equations on the torus $\mathbb{T}^d$, $d \ge 2$, with translation invariant Gaussian random initial data. We show that, for the Gaussian measure formally given by $\mu_0 \sim \exp(-\frac12 \| u \|_{\dot H^\alpha}^2)$ on divergence-free fields, its nonlinear evolution $\nu_t$ and its linear evolution $\mu_t$ are mutually singular for almost every time. Moreover, the same holds for the nonlinear evolution of an arbitrary initial measure $\nu_0$. This is in sharp contrast to the available results for Hamiltonian PDEs, where Gaussian measures are commonly quasi-invariant.

We achieve this by showing that there exists a set $\mathscr G_t$ such that $\mu_t(\mathscr G_t) = 1$, but for any smooth solution of Navier--Stokes, the set of times $t$ such that $u(t) \in \mathscr G_t$ is null, which we deduce from a quantitative convexity bound for frequency-localised versions of the (typically infinite) quantity $\| \re^{-t \Delta} u(t) \|_{L^2}^2$.
\end{abstract}

\maketitle

\section{Introduction} \label{sec:introduction}

The incompressible Navier--Stokes equations
\begin{equation} \label{eqn:NSE} \tag{NSE}
	\begin{cases}
		& ( \partial_t - \Delta ) u + (u \cdot \nabla) u + \nabla p = 0, \\
		& u(0) = u_0, \quad \nabla \cdot u = 0
	\end{cases}
\end{equation}
are the basic model for the motion of a viscous fluid. Their mathematical theory goes back to Leray \cite{Ler34}, who constructed global weak solutions, and to Fujita and Kato \cite{FK64}, who developed the theory of strong (mild) solutions; see \cite{Kat84,KT01} for well-posedness in scaling-critical spaces and \cite{RRS16} for a modern account. In two dimensions, smooth solutions are global \cite{Lad58,LP59}, while in three dimensions global regularity is a famous open problem \cite{Fef06}. Since turbulent flows are naturally described statistically, it is of interest to study \eqref{eqn:NSE} with random initial data, that is, to study how probability measures on the space of initial data are transported by the flow. For fluid equations, this point of view goes back at least to the Gaussian (Gibbs-type) measures that are invariant for the two-dimensional Euler equations, constructed by Albeverio and Cruzeiro \cite{AC90} (see also \cite{Fla18}), and to the analysis of the two-dimensional Navier--Stokes equations driven by space-time white noise of Da Prato and Debussche \cite{DPD02}. A random-data Cauchy theory for \eqref{eqn:NSE} has been developed in \cite{ZF11,DC11,NPS13}.

In this paper, we study the evolution of Gaussian measures $\mu_0$ outside of equilibrium. This is motivated by the study of \emph{quasi-invariance} for Hamiltonian PDEs. This was initiated by Tzvetkov \cite{Tzv15} for the BBM equation, where he proved that the Gaussian measure formally given by
\begin{equation} \label{eqn:gamma s}
	\gamma_s \sim \exp\Big( - \frac12 \| u \|_{H^s}^2 \Big)
\end{equation}
is quasi-invariant under the flow of BBM, or in other words, $(\Phi_t)_{\#}\gamma_s \ll \gamma_s$. This was especially striking at the time, since the result is not covered by the classical theory of transformations of Gaussian measures of Cameron and Martin \cite{CM44}, Ramer \cite{Ram74} and Kusuoka \cite{Kus82,Kus83}, and gives an extension to PDEs of the results by Cruzeiro \cite{Cru83a,Cru83b}.

This success has been replicated for many dispersive and Hamiltonian models, including nonlinear Schr\"odinger equations \cite{OT17,OST18,OTT19,DT21,PTV20,Kne25a,Kne25b,FT19,FS22,FT25,ST25,TV25,ST25b,GLT23b, GLT22}, nonlinear wave equations \cite{OT20,GOTW22,FT26}, the periodic generalised KdV equation \cite{PTV23}, and the Benjamin--Ono--BBM equation \cite{GLT23a}.

 However, in the setting of parabolic equations, including \eqref{eqn:NSE}, one can easily check that quasi-invariance must fail. This is due to the fact that the solution of \eqref{eqn:NSE} immediately becomes smooth (and actually analytic) for any $t > 0$, up to the blow-up time, while the event
\[ E_\infty = \{ u \in C^\infty\} \]
satisfies $\gamma_s(E_\infty) = 0$. In fact, this problem is not specific to nonlinear PDEs, and it is a consequence of the regularising effect of the linear flow.
 In contrast, for dispersive PDEs, the main mechanism that allows for quasi-invariance is the fact that the linear flow leaves the Gaussian measure \emph{invariant}, since it acts on each Fourier coefficient by a rotation. From a technical point of view, this is also a fundamental ingredient in the commonly used techniques. More precisely, what the quasi-invariance results often show is that $(\Phi_t)_{\#}\gamma_s \ll (S(t))_\#\gamma_s$, where $S(t)$ denotes the linear flow. Therefore, we define
\[ \mu_0 \sim \exp\Big( - \frac12 \| u \|_{\dot H^\alpha}^2 \Big), \]
restricted to divergence-free vector fields, and
 ask the following natural question:
\begin{center}\vspace{3pt}
 Let $\mu_t = (\re^{t\Delta})_\# \mu_0$. Does it hold that $(\Phi_t)_{\#} \mu_0 \ll \mu_t$?\vspace{3pt}
\end{center}

Comparing the nonlinear evolution with the linear evolution naturally removes the main obstruction. Indeed, an easy application of Ramer's theorem \cite{Ram74} shows that the answer is positive for the Galerkin approximations
\begin{equation} \tag{NSE-$N$}
	\begin{cases}
		& ( \partial_t - \Delta ) u + P_{\le N} (P_{\le N} u \cdot \nabla) P_{\le N} u + P_{\le N} \nabla p = 0, \\
		& u(0) = u_0, \quad \nabla \cdot u = 0,
	\end{cases}
\end{equation}
where $P_{\le N}$ denotes the Fourier projector onto the frequencies $|n| \le N$.

Showing such a property is also related to quasi-Gaussianity, which has been studied by Coe, Hairer and the second author in \cite{CHT25}, who showed that the invariant measure of the two-dimensional \emph{stochastic} Navier--Stokes equations admits a density with respect to the natural Gaussian measure under certain conditions on the noise (building upon the work of Mattingly and Suidan \cite{MS05}, which showed the analogous result for hyperviscous Navier--Stokes). Such a result for \eqref{eqn:NSE} could provide similar information for invariant measures for the deterministic (forced) Navier--Stokes system, following the work of Flandoli and Romito \cite{FR01}, where it is shown that invariant measures for the forced three-dimensional Navier--Stokes equations concentrate on smooth data.

Unfortunately, our main result, Theorem~\ref{thm:main precise} below, shows that for \eqref{eqn:NSE} the answer is \emph{negative}, and it does not depend on the choice of the initial measure. More precisely, we show the following.
\begin{thm} \label{thm:main precise}
	Let $\alpha > d + 6$, and let $\frac d2 + 6 < \sigma < \alpha - \frac d2$. Let $\nu_0$ be a probability measure on $\dot H^\sigma(\TT^d)$, and let $\mu_0$ be a centred Gaussian measure on $\dot H^\sigma(\TT^d)$ with covariance $(-\Delta)^{-\alpha}$ restricted to divergence-free vector fields. Denote $\mu_t \coloneq (\re^{t\Delta})_\# \mu_0$.
	Define, for $t \ge 0$, the set
	\begin{equation} \label{eqn:WP defn}
		\begin{aligned}
			& \WP(t) \coloneq \bigl\{ u_0 \in \dot H^\sigma(\TT^d) \colon \text{there exists a unique mild solution} \\
			& \hskip 30pt \text{$u \in C([0,t]; \dot H^\sigma(\TT^d))$ of \eqref{eqn:NSE} with $u(0) = u_0$} \bigr\}.
		\end{aligned}
	\end{equation}
	Let $\Phi_t \colon \WP(t) \to \dot H^\sigma(\TT^d)$ be the solution map $u_0 \mapsto u(t)$.
	Then, for every $s_0 \ge 0$, the set
	\begin{equation*}
		\cN_{s_0}
		\coloneq \bigl\{ t > 0 : (\Phi_t)_\#( \1_{\WP(t)} \, \nu_0) \not\perp \mu_{s_0+t} \bigr\}
	\end{equation*}
	has Lebesgue measure zero.
\end{thm}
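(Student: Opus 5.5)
Following the strategy in the abstract, the plan is to build for each time $\tau>0$ a Borel set $\cG_\tau\subset \dot H^\sigma(\TT^d)$ with $\mu_\tau(\cG_\tau)=1$, and then show that every smooth solution meets these sets only on a null set of times. For a field $v$ put $E_N(\tau,v)\coloneq \norm{P_{\le N}\re^{-\tau\Delta}v}{L^2}^2$. Under $\mu_\tau$, $\re^{-\tau\Delta}v$ has law $\mu_0$, so $E_N(\tau,v)$ is a sum of independent $\chi^2$-type variables with mean $m_N\sim\sum_{0<|n|\le N}|n|^{-2\alpha}$. Because $m_N$ converges when $\alpha>d/2$, the plain energy is not the right statistic. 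I will use instead a frequency-weighted dyadic version, $F_N(\tau,v)\coloneq\sum_{|n|\sim N}|n|^{2\alpha}e^{2\tau|n|^2}|\hat v(n)|^2$, which is $\norm{(-\Delta)^{\alpha/2}\re^{-\tau\Delta}P_N v}{L^2}^2$. Under $\mu_\tau$ this equals $\chi^2$ with $\sim N^d$ degrees of freedom, so $F_N/(cN^d)\to1$ almost surely, with Gaussian concentration plus Borel--Cantelli along dyadic $N$. I therefore set $\cG_\tau\coloneq\{v: \lim_{N}F_N(\tau,v)/(cN^d)=1\}$, and $\mu_{s_0+t}(\cG_{s_0+t})=1$.

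The main claim is then: if $u\in C([0,T];\dot H^\sigma)$ solves \eqref{eqn:NSE}, then $\{t\in(0,T]: u(t)\in\cG_{s_0+t}\}$ is Lebesgue-null. I intend to prove this through convexity in $t$. The heuristic is that for the linear flow, $\re^{(s_0+t)|n|^2}\hat u(t,n)$ is constant in $t$, and the nonlinearity breaks this. Put $a_n(t)\coloneq \re^{(s_0+t)|n|^2}\hat u(t,n)$. Duhamel gives $\partial_t a_n=-\re^{(s_0+t)|n|^2}\widehat{B(u,u)}(n)$. Since $u(t)$ is analytic for $t>0$, $|\hat u(t,n)|\lesssim \re^{-c(t)|n|}$, so the forcing is enormous. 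Setting $g_N(t)\coloneq \log F_N(s_0+t,u(t))$, the plan is a quantitative lower bound on $g_N''$, or equivalently a log-convexity statement for $\sum|n|^{2\alpha}|a_n|^2$. This is where the derivative loss appears, which the hypotheses $\sigma>d/2+6$ and $\alpha>d+6$ are meant to absorb.

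The key fact is then: membership $u(t)\in\cG_{s_0+t}$ for $t$ in a set $A$ of positive measure forces $g_N(t)=d\log N+\log c+o(1)$ on $A$. By Egorov, the $o(1)$ can be taken uniform on a subset $A'$ of positive measure. A function that is nearly constant on a set of positive measure yet has second derivative bounded below by some $\delta_N\to\infty$ relative to its oscillation gives a contradiction. Concretely, take a density point and three points of $A'$ at distance $h$; the second difference is $\ge \delta_N h^2$ but must be $o(1)$. The real dichotomy I must establish is that either $\sum_{|n|\sim N}|n|^{2\alpha}|a_n(t)|^2$ is nonconstant with strict convexity rate growing in $N$, or the nonlinear contributions vanish at frequency $N$. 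In the second case a unique-continuation-type argument (for instance backward uniqueness, or $B(u,u)$ vanishing identically in high frequencies) should force $u$ to be a shear or otherwise trivial solution. Such solutions are analytic, so $F_N$ grows super-polynomially like $\re^{2\tau N^2}|\hat u(n)|^2$ and is not $\sim N^d$ unless $\hat u$ has exactly Gaussian-matched tails, which can happen only on a measure-zero set of $t$ (exponents linear in $t$).

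To conclude, disintegrate: $(\Phi_t)_\#(\1_{\WP(t)}\nu_0)(\cG_{s_0+t})=\nu_0(u_0\in \WP(t): \Phi_t u_0\in\cG_{s_0+t})$. By Fubini over $t$ and the pathwise null statement, this vanishes for a.e. $t$, so the measures are mutually singular off a null set. Measurability of $(t,u_0)\mapsto \1_{\cG_{s_0+t}}(\Phi_tu_0)$ follows from continuity of $\Phi$ on the open set where it is defined. The main obstacle is the quantitative convexity bound for the frequency-localised weighted energy. That is, showing that the nonlinear drift of $a_n$ cannot be arranged to keep the $\chi^2$ normalisation on a positive-measure set of times, while controlling the derivative losses from $\partial_t^2 a_n$ in the $\dot H^\sigma$ framework.
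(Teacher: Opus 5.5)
There is a genuine gap. With your choice of $\cG_\tau$, the pathwise claim you want to prove is false.

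Take a shear flow $u_0=(f(x_2),0,\dots,0)$ with $\hat f(m)$ real, even, $\hat f(0)=0$, and $|\hat f(m)|^2=\kappa|m|^{d-1-2\alpha}\re^{-2s_0m^2}$. Then $u_0$ is divergence-free and $(u_0\cdot\nabla)u_0=0$, so $u(t)=\re^{t\Delta}u_0$ is the global mild solution of \eqref{eqn:NSE}. Moreover $F_N(s_0+t,u(t))=F_N(s_0,u_0)$ for every $t$, because the weight $\re^{2(s_0+t)|n|^2}$ exactly cancels the heat decay. Since $\sum_{N\le|m|<2N}|m|^{d-1}\sim N^d$, a suitable $\kappa$ gives $F_N/(cN^d)\to1$. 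Also $u_0\in\dot H^\sigma$; for $s_0=0$ this uses exactly $\sigma<\alpha-\frac d2$. Hence $u(t)\in\cG_{s_0+t}$ for all $t>0$, and $\nu_0=\delta_{u_0}$ defeats your argument, even though the theorem is trivial for it.

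Your proposed rescue ("exponents linear in $t$") fails for precisely this reason: the exponents cancel. The underlying problem is that $F_N$ only records the $\ell^2$ size of $v$ on shells, and no such information forces $B(v)$ to be large. Analyticity of $u(t)$ only gives upper bounds on the forcing, so it cannot make the forcing enormous.

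The missing idea is to build a \emph{lower bound on the nonlinearity} into the typical set. The paper's $\cG_t$ contains the condition $\re^{\lambda tj_k}\norm{P_{j_k}B(u)}{\dot H^s}^2\ge|A_{j_k}|\re^{(\lambda-1-\eta_2)tj_k}$. In words: for $\mu_t$-typical $u$, the shell energy of $B(u)$ is of order $\re^{-tj}$ rather than $\re^{-2tj}$, coming from pairs $n_1\approx n_2\approx\frac n2$, for which $|n_1|^2+|n_2|^2=\frac12|n|^2+2|m|^2$. This is proved by three ingredients:
\begin{enumerate}[label=(\alph*)]
\item a Wick second-moment lower bound, with extra cancellation to handle in $d=2$;
\item a variance bound of relative size $|A_j|^{-1}$;
\item Borel--Cantelli along shells with $|A_{j_k}|\ge2^k$ (Lemma~\ref{lem:Aj count}).
\end{enumerate}
The term $\re^{\lambda tj}\norm{P_jB(u(t))}{\dot H^s}^2$ is exactly the positive term in the convexity estimate of Proposition~\ref{prop:local theory}\,(ii). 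It is large at good times and merely nonnegative at bad times.

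The convexity step, which you flag as the main obstacle, also cannot close as you set it up. A lower bound on $g_N''$ over a fixed interval $[t_0-2h,t_0]$ requires controlling the solution at the intermediate (bad) times, where you have no information. Over such an interval the weights $\re^{2(s_0+t)|n|^2}$ also change by a factor $\re^{2hN^2}$, and Egorov does not address either issue.

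The paper instead works at a Lebesgue point $t_0$ of the good set. It uses a window $h_0\approx\re^{-(\frac\lambda4+\tau)t_0j_k}$ that shrinks with $k$ and starts at a good time $t_0-h_0$. A local theory in the weighted spaces $X^s_J(\lambda,t_0)$, whose bilinear loss is $\re^{(\frac\lambda4+\eta)t_0J}$, keeps the weighted norm bounded on the whole window. This is what controls every error term in \eqref{eqn:convexity}. Integrating twice, the forcing contributes about $h_0^2\re^{(\lambda-1)t_0j_k}\approx\re^{(\frac\lambda2-1)t_0j_k}$, up to small exponents and polynomial factors. The typical set bounds the value at $t_0$ by $\re^{(\lambda-2+\eta_0)t_0j_k}$, and the forcing beats this only because $\frac\lambda2-1>\lambda-2$, i.e.\ $\lambda<2$.

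Your weight $\re^{2\tau|n|^2}$ is the case $\lambda=2$, where these two exponents coincide and no contradiction is available. This is why the paper uses a weight strictly weaker than the one that maps $\mu_\tau$ back to $\mu_0$. It is also why its typical set records exponentially small upper bounds on the $\re^{\lambda tj}$-weighted shell energies rather than a $\chi^2$ normalisation.
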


In particular, this shows that the two measures are singular for almost every time, where we need to restrict the push-forward to the set of data whose solution does not blow up before time $t$. We remark that the restrictions on $\sigma$ and on $\alpha$ are not substantial, and we expect the above result to hold up to the critical space $\dot H^{\frac d2 - 1}(\TT^d)$.

Of course, one could ask if this mechanism survives after (smooth) blow-up. Indeed, it is in principle possible that a high regularity norm of a solution to \eqref{eqn:NSE} blows up in finite time, but the flow still has a unique continuation in a weaker space. As it turns out, we also show that the same negative result holds for any extension of the flow. More precisely, we have the following.
\begin{cor} \label{cor:extension}
	 Let $\cE \subseteq \cD' = \cD'(\TT^d)$ be a Borel set of divergence-free periodic distributions, and let $\tilde\Phi_t \colon \cE \to \cE$, $t \ge 0$, be Borel maps with $\tilde\Phi_0 = \id$ such that, for all $s \ge 0$, $r > 0$ and $u_0 \in \cE$,
	\begin{equation} \label{eqn:consistency}
		\tilde\Phi_s(u_0) \in \WP(r)
		\quad \Longrightarrow \quad
		\tilde\Phi_{s + r}(u_0) = \Phi_r\bigl( \tilde\Phi_s(u_0) \bigr).
	\end{equation}
	That is, $\tilde\Phi$ continues as the smooth flow $\Phi$ as soon as it reaches a datum for which a smooth solution exists.
	Then, for every Borel probability measure $\nu_0$ on $\cD'$ with $\nu_0(\cE) = 1$,
	\begin{equation*}
		(\tilde\Phi_t)_\# \nu_0 \perp \mu_t
		\qquad \text{for almost every } t > 0.
	\end{equation*}
\end{cor}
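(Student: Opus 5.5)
We reduce Corollary~\ref{cor:extension} to Theorem~\ref{thm:main precise}, applied with a countable family of starting times. The key input is the underlying set-theoretic statement behind the theorem, recorded in the abstract: for each $t>0$ there is a Borel set $\cG_t$ with $\mu_t(\cG_t)=1$, such that for every smooth solution $u$ of \eqref{eqn:NSE} on an interval $[a,b]$ the set $\{t\in[a,b] : u(t)\in \cG_{t}\}$ is Lebesgue-null. More precisely, we use this in the shifted form underlying $\cN_{s_0}$: for $u_0\in \WP(T)$, the set $\{r\in(0,T] : \Phi_r(u_0)\in\cG_{s_0+r}\}$ is null. If the paper only provides the theorem as stated, we argue directly with measures, as described at the end.

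\emph{Step 1 (decomposition by regularisation time).} Fix $\nu_0$ with $\nu_0(\cE)=1$. For a datum $u_0\in\cE$ and rationals $q\ge0$, $r>0$, consider the Borel event
\[ A_{q,r} \coloneq \bigl\{ u_0\in\cE : \tilde\Phi_q(u_0)\in \WP(r) \bigr\}. \]
On $A_{q,r}$, the consistency \eqref{eqn:consistency} gives $\tilde\Phi_{q+r'}(u_0)=\Phi_{r'}(\tilde\Phi_q(u_0))$ for every $r'\in(0,r]$. Note that if $v\in \WP(r)$, then $v\in\WP(r')$ for $r'\le r$, and the two solutions agree by uniqueness. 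Set
\[ B \coloneq \bigcup_{q,r\in\QQ} A_{q,r}\times (q,q+r] . \]
The complement of $B$ consists of the pairs $(u_0,t)$ for which $\tilde\Phi_t(u_0)$ does not lie on a smooth trajectory started at a rational time before $t$.

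\emph{Step 2 (the smooth part is singular).} Let $\rho_q\coloneq(\tilde\Phi_q)_\#(\1_{A_{q,r}}\nu_0)$, a finite measure on $\dot H^\sigma$. Apply Theorem~\ref{thm:main precise} with initial measure $\rho_q$ (normalised) and $s_0=0$; since $\mu_t=\mu_{q+(t-q)}$, we take instead $s_0=q$. This gives
\[ (\Phi_{r'})_\#(\1_{\WP(r')}\rho_q)\perp \mu_{q+r'} \qquad \text{for a.e. } r'. \]
On $A_{q,r}$ and for $r'\le r$, this measure dominates $(\tilde\Phi_{q+r'})_\#(\1_{A_{q,r}}\nu_0)$. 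Because the family $(q,r)$ is countable, the following holds for a.e.\ $t$: for all $q,r$ with $t\in(q,q+r]$, we have $(\tilde\Phi_t)_\#(\1_{A_{q,r}}\nu_0)\perp\mu_t$. Countable unions of sets, each singular to $\mu_t$ via a $\mu_t$-null set, are again singular to $\mu_t$. Hence $(\tilde\Phi_t)_\#(\1_{E_t}\nu_0)\perp\mu_t$ for a.e.\ $t$, where $E_t\coloneq\bigcup_{t\in(q,q+r]}A_{q,r}$.

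\emph{Step 3 (the non-smooth part).} This is the main obstacle. We must show that $(\tilde\Phi_t)_\#(\1_{E_t^c}\nu_0)\perp\mu_t$, i.e.\ that data which never re-enter a smooth regime near time $t$ are sent off the support of $\mu_t$. The idea is to use that $\mu_t$ charges only data in $\WP(r)$. Indeed, for $t>0$, a $\mu_t$-typical field is $\re^{t\Delta}$ applied to a $\dot H^\sigma$ field, hence analytic, and local well-posedness for analytic data gives a positive existence time $r$. So $\mu_t\bigl(\bigcup_{r\in\QQ}\WP(r)\bigr)=1$. Now let $(u_0,t)\notin B$, and suppose $\tilde\Phi_t(u_0)\in\WP(r)$ for some $r$. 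Then at time $t$ a smooth continuation exists; however, for $B$ we need a rational starting time $q<t$. This is where the argument needs care. We would handle it by replacing $\cG_t$ with $\cG_t\cap\bigcup_r\WP(r)$ and applying Step~2 with $q=t$ itself. We then use Fubini in $(u_0,t)$ on the product set $\{(u_0,t):\tilde\Phi_t(u_0)\in\cG_t\}$. For each fixed $u_0$, the set of such $t$ is covered by countably many smooth intervals $(t_j,t_j+r_j)$, where the trajectory is smooth by \eqref{eqn:consistency}. These intervals can be shrunk to rational left endpoints, since $\tilde\Phi_{q}(u_0)\in\WP$ for rational $q\in(t_j,t_j+r_j)$. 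On each interval the null-time property kills the set of such $t$, except for the countably many endpoints. Fubini then gives $\int\nu_0\bigl(\tilde\Phi_t(u_0)\in\cG_t\bigr)\,\rd t=0$, so $(\tilde\Phi_t)_\#\nu_0(\cG_t)=0$ while $\mu_t(\cG_t)=1$ for a.e.\ $t$. This Fubini route subsumes Step~2. It requires joint measurability of $(u_0,t)\mapsto\tilde\Phi_t(u_0)$; we would either assume it or work with the countable rational-time structure above to avoid it.
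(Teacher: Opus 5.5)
Your Steps~1--2 are correct and follow the paper. Your $E_t$ is, up to allowing $q=0$, the paper's $\cR_t$, and applying Theorem~\ref{thm:main precise} with $s_0=q$ to countably many measures $(\tilde\Phi_q)_\#(\1_{A_{q,r}}\nu_0)$ is exactly its treatment of the smooth part.

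The pathwise content of Step~3 is also right. For fixed $u_0$, each $t$ with $\tilde\Phi_t(u_0)\in\cG_t\subseteq\dot H^\sigma$ starts a smooth stretch by \eqref{eqn:consistency}. Corollary~\ref{lem:soln not too smooth}, summed over $\vec M,k_0$, makes the interiors of these stretches null. Only left endpoints remain, although you assert rather than prove that there are countably many.

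The gap is passing from this pathwise statement to ``$\nu_0$-null for a.e.\ $t$''. Tonelli needs $\{(t,u_0)\colon \tilde\Phi_t(u_0)\in\cG_t\}$, or at least its part with $u_0\notin E_t$, to be product measurable. The corollary only assumes each $\tilde\Phi_t$ is separately Borel, so you may not assume joint measurability, and null sections alone do not give the conclusion. Your first attempt does not help either:
\begin{itemize}
\item intersecting $\cG_t$ with $\bigcup_r\WP(r)$ changes nothing, since $\cG_t\subseteq\dot H^\sigma$;
\item Theorem~\ref{thm:main precise} with $s_0=t$ only controls $\mu_{t+r'}$ for a.e.\ $r'>0$, never time $t$ itself, and uncountably many such applications cannot be combined.
\end{itemize}
Your alternative of ``working with the countable rational-time structure'' is the right idea, but it is precisely the nontrivial step and you did not carry it out.

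Here is how the paper closes it. After Step~2, data with $\tilde\Phi_t(u_0)\notin\dot H^\sigma$ are harmless because $\mu_t(\dot H^\sigma)=1$. So only $\cS_t=\tilde\Phi_t^{-1}(\dot H^\sigma)\setminus\cR_t$ remains, and one needs $\nu_0(\cS_t)=0$ for a.e.\ $t$.

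By local well-posedness at $\tilde\Phi_t(u_0)\in\dot H^\sigma$ and \eqref{eqn:consistency}, $\cS_t$ lies inside $\mathscr J_t$, the set of data whose trajectory is smooth on some $(t,q]$ but on no $[t-h,t]$. Write $\cR_{s,q}=\tilde\Phi_s^{-1}(\WP(q-s))$; each is Borel since $\WP(q-s)$ is Borel. Using the monotonicity $\cR_{s,q}\subseteq\cR_{s',q}$ for $s<s'<q$, one can express everything through countably many of these sets:
\[
\mathscr J_t=\bigcup_{q\in\QQ\cap(t,\infty)}\Bigl(\bigcap_{s\in\QQ\cap(t,q)}\cR_{s,q}\Bigr)\setminus\cR_t,
\qquad
\{(t,u_0)\colon u_0\in\cR_t\}=\bigcup_{s,q\in\QQ,\ 0<s<q}(s,q)\times\cR_{s,q}.
\]
Hence $\{(t,u_0)\colon u_0\in\mathscr J_t\}$ is product measurable, using only the separate Borel measurability of each $\tilde\Phi_s$.

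Its sections $\mathscr J(u_0)$ are countable. Choosing a rational $q_t>t$ as in the definition gives a strictly increasing map $\mathscr J(u_0)\to\QQ$, which is also the missing proof of your ``countably many endpoints''. Tonelli then gives $\nu_0(\cS_t)\le\nu_0(\mathscr J_t)=0$ for a.e.\ $t$.

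So your Step~2 plus this construction is a complete proof. The Fubini-over-$\cG_t$ route as written is not, because it depends on a measurability hypothesis the corollary does not provide.
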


\subsection{Heuristic for singularity and strategy of the proof} \label{subsec:strategy}
In order to understand if one can have $(\Phi_t)_{\#}\mu_0 \ll \mu_t$, we can formally compute the Radon--Nikodym density
\[ \frac{ \rd (\Phi_t)_{\#}\mu_0}{ \rd \mu_t},\]
following the heuristic of \cite{CT26}. If $\Phi_t$ is the flow of an ODE $\dot y = b(y)$ on $\RR^D$ and $\mu_0(\rd y) = Z^{-1} \re^{-E(y)} \, \rd y$, then Liouville's equation, solved along characteristics, gives
\begin{equation*}
	\frac{\rd (\Phi_t)_\# \mu_0}{\rd y}(y)
	= Z^{-1} \exp\Bigl( -E\bigl( \Phi_{-t}(y) \bigr) - \int_0^t (\divergence b)\bigl( \Phi_{-t'}(y) \bigr) \, \rd t' \Bigr).
\end{equation*}
Applying this (formally) to \eqref{eqn:NSE}, with $E(u) = \frac12 \norm{ u }{ \dot H^\alpha }^2$, we obtain
\begin{equation} \label{eqn:formal density}
	\frac{\rd (\Phi_t)_\# \mu_0}{\rd \mu_t}(u)
	= \exp\Bigl( \tfrac12 \norm{ \re^{-t\Delta} u }{ \dot H^\alpha }^2 - \tfrac12 \norm{ \Phi_{-t}(u) }{ \dot H^\alpha }^2 \Bigr).
\end{equation}
We note that the choice of $\mu_t$ solving the linear equation manifests itself in the fact that the divergence term cancels out between the two expressions.
We then write the exponent in \eqref{eqn:formal density} as
\begin{equation*}
	 \tfrac12 \norm{ \re^{-t\Delta} u }{ \dot H^\alpha }^2 - \tfrac12 \norm{ \Phi_{-t}(u) }{ \dot H^\alpha }^2
	= \int_0^t \innerprod{ \re^{-2\tau\Delta} \Phi_\tau(u_0) }{ B\bigl( \Phi_\tau(u_0) \bigr) }{ \dot H^\alpha } \, \rd \tau,
\end{equation*}
where $u_0 \coloneq \Phi_{-t}(u)$.
Since the equation is semi-linear (and in view of the conjectured quasi-Gaussianity), we can test this expression on the linear flow $\re^{t\Delta}$. Namely, we expect that if this density is well-defined, then this quantity will have finite moments when we replace the nonlinear flow $\Phi_t$ with the linear flow $\re^{t\Delta}$. In this case, we obtain
\[ \int_0^t \innerprod{ u_0 }{ \re^{-\tau\Delta} B\bigl( \re^{\tau\Delta} u_0 \bigr) }{ \dot H^\alpha } \, \rd \tau.\]
One can then check that indeed
\[ \EE_{u_0 \sim \mu_0} \Big| \int_0^t \innerprod{ u_0 }{ \re^{-\tau\Delta} B\bigl( \re^{\tau\Delta} u_0 \bigr) }{ \dot H^\alpha } \, \rd \tau \Big|^2 = \infty \]
for every $t > 0$. Specifically, this is due to the Gevrey regularity of the term $ B\bigl( \re^{\tau\Delta} u_0 \bigr)$. More precisely, due to the quadratic nonlinearity, for typical Gaussian data, one has that (for large $n$)
\[ \log \bigl| \widehat{B ( \re^{\tau\Delta} u_0 )}(n) \bigr| = \sup_{n_1 + n_2 = n} \bigl( -\tau (|n_1|^2 + |n_2|^2) \bigr) + O(\log |n|) = -\frac\tau2 |n|^2 + O(\log |n|). \]
In particular, the object $\re^{-\tau\Delta} B\bigl( \re^{\tau\Delta} u_0 \bigr)$ becomes ill-defined. This strongly suggests that the measure $(\Phi_t)_\# \mu_0$ is singular with respect to $\mu_t$.

Indeed, such a phenomenon was already observed by Coe and the second author in \cite{CT26}, who showed that for the cubic Szeg\H{o} equation, the evolution of the Gaussian measure $\gamma_s$ as in \eqref{eqn:gamma s} is singular with respect to $\gamma_s$ for all but countably many times, in a suitable range of regularities $s$.

In our case, however, we point out that the approach of \cite{CT26} does not work. There are two distinct reasons for this: the first is that the reference measure $\mu_t$ depends on time, which prevents us from using the abstract result \cite[Proposition 4.1]{CT26}. We remark that this obstruction is substantial, and we are not aware of an immediate tweak that would give an analogous condition in this setting. The other obstruction is that the exponent of the formal density
\[\tfrac12 \norm{ \re^{-t\Delta} u }{ \dot H^\alpha }^2 - \tfrac12 \norm{ \Phi_{-t}(u) }{ \dot H^\alpha }^2 \]
is not dominated by its semi-linear behaviour (with $\Phi_t$ replaced by $\re^{t\Delta}$ and $u$ replaced by $\re^{t\Delta} u_0$), but instead every iterate of its Picard expansion generates terms that diverge at a faster rate as the frequency size $|n| \to \infty$.

Overall, mutual singularity of measures under the evolution of nonlinear PDEs is not a well-understood topic, and apart from the aforementioned result \cite{CT26}, we are aware only of results for ODEs (see for instance \cite{OST18,STX20}, where absolute continuity is shown to fail) and for linear transport equations (see \cite{dSL26}); in the stochastic setting, see also \cite{HKN24} for singular SPDEs. These techniques are not applicable to this setting.

Therefore, inspired by the formula for the formal density, in order to obtain Theorem~\ref{thm:main precise}, we rely on studying the quantity $\norm{ \re^{-t\Delta} u }{ \dot H^\alpha }^2$ on the Fourier shell $|n|^2 = j$. As in \cite{CT26}, we observe that, formally, along solutions of \eqref{eqn:NSE} distributed according to $\mu_t$,
\[ \frac{ \rd^2}{ \rd t^2} \norm{ \re^{-t\Delta} P_j u(t) }{ \dot H^\alpha }^2 \gg \norm{ \re^{-t\Delta} P_j u(t) }{ \dot H^\alpha }^2, \]
where $P_j$ denotes the projection on the shell (see \eqref{eqn:Pj defn}). This suggests a route for a contradiction. Indeed, writing $f_j(s) \coloneq \norm{ \re^{-s\Delta} P_j u(s) }{ \dot H^\alpha }^2$, if one had a quantitative version of quasi-Gaussianity, then we could write
\begin{equation} \label{eqn:contradiction heu}
	\EE_{\mu_t} [f_j] - \EE_{\mu_{t-h}} [f_j]
	= h \, \EE_{\mu_{t-h}} [f_j'] + \int_{t-h}^{t} \int_{t-h}^{s'} \EE_{\mu_s} [f_j''] \, \rd s \, \rd s',
\end{equation}
and as long as $h$ is large enough, derive a contradiction from the fact that the last term is much larger than the others.

To make this argument rigorous, we proceed in the following way: we suppose by contradiction that singularity fails on a set of times of positive measure, and consider the set $\cT \subseteq (0,\infty)$ of times at which $u(t)$ is ``close enough'' to being Gaussian (in the sense of Proposition~\ref{prop:Gaussian}). By a Fubini argument, we can show that this set has positive measure for some initial datum, and therefore it has Lebesgue points. However, at these Lebesgue points, we can bound the integral on the right-hand side of \eqref{eqn:contradiction heu} from below, which yields a contradiction. In order to fully bound the error terms in the expression, and in order to bound the length of the time step $h$ from below, we also need to develop a local well-posedness theory for \eqref{eqn:NSE} in exponentially weighted spaces, see Section~\ref{sec:local}. In the proof, the quantities $f_j$ are replaced by the weighted energies $\re^{\lambda t j} \norm{ P_j u(t) }{ \dot H^s }^2$, with $\lambda < 2$ close to $2$ and $\frac d2 < s \le \sigma - 6$, and the heuristic $f_j'' \gg f_j$ is made rigorous by the convexity estimate of Proposition~\ref{prop:local theory}\,(ii).

\subsection{Notation} \label{subsec:notation}
We consider \eqref{eqn:NSE}, that is,
\begin{equation*}
	\begin{cases}
		& ( \partial_t - \Delta ) u + (u \cdot \nabla) u + \nabla p = 0, \\
		& u(0) = u_0, \quad \nabla \cdot u = 0
	\end{cases}
\end{equation*}
with periodic boundary conditions on a fixed box, i.e.\ posed for $u \colon \RR_{\ge 0} \times \TT^d \to \RR^d$ in $d \ge 2$. Here, $\TT = [0,2\pi)/\sim$ is the interval with endpoints identified, equipped with the normalised Lebesgue measure $(2\pi)^{-1}\, \rd x$, where the prefactor will be dropped from our notation when integrating over $\TT$. We use the convention
\begin{equation*}
	\hat f(n)
	= \int_{\TT^d} f(x) \re^{-\ri n \cdot x} \, \rd x
\end{equation*}
as the Fourier transform. As the mean $\int u$ is conserved by the flow of \eqref{eqn:NSE}, we assume $u$ (and $u_0$) are mean-zero, that is, $\hat u(0) = \int u = 0$. We will work with \eqref{eqn:NSE} in the form
\begin{equation} \label{eqn:LerayNSE}
	\begin{cases}
		& ( \partial_t - \Delta ) u = B(u,u), \\
		& u(0) = u_0, \quad \divergence( u_0) = 0,
	\end{cases}
\end{equation}
where $B(u,v) = - \Pi ( (u \cdot \nabla) v ) = - \Pi \divergence ( u \otimes v )$ (the last identity holding for divergence-free $u$), $B(u) \coloneq B(u,u)$, and $\Pi$ denotes the Leray projection onto divergence-free vector fields. This can also be defined
via the Fourier symbol
\begin{equation} \label{eqn:Leray proj defn}
	\hat{\Pi f}(n)
	= \Pi_n \hat f(n)
	\coloneq \hat f(n) - \frac{\hat f(n) \cdot n}{|n|^2} n \eqcolon \Bigl( 1 - \frac{n \otimes n}{|n|^2} \Bigr) \hat f(n).
\end{equation}
The matrix $\Pi_n$ is nothing but the projection onto the orthogonal complement of the subspace of $\RR^d$ spanned by $n$. Let us clarify what we mean by a solution to \eqref{eqn:NSE}.
\begin{defn}[\eqref{eqn:NSE} mild solutions] \label{defn:solution}
	 Let $\sigma > \frac d2 + 6$ be as in Theorem~\ref{thm:main precise}.
	A function $u : [0,T] \times \TT^d \to \RR^d$ is a \textit{mild solution of \eqref{eqn:NSE}} if it belongs to the space
	\begin{equation*}
		C( [0,T] ; \dot H^\sigma(\TT^d) ) \cap L^2( [0,T] ; \dot H^{\sigma + 1}(\TT^d) ) \eqcolon C_T \dot H_x^\sigma \cap L_T^2 \dot H_x^{\sigma + 1}
	\end{equation*}
	and satisfies the integral formulation of \eqref{eqn:LerayNSE}, that is,
	\begin{equation} \label{eqn:LerayNSE int'd}
		u(t)
		= \re^{t \Delta} u_0 - \int_0^t \re^{(t - t')\Delta} \bigl( \Pi \bigl( ( u(t') \cdot \nabla ) u(t') \bigr) \bigr) \, \rd t'
	\end{equation}
	for all $t \in [0,T]$, in the sense of distributions.
\end{defn}
 In principle, mild solutions can be defined in the same way for any $\sigma \ge \frac d2 - 1$, the scaling-critical exponent for \eqref{eqn:NSE}. However, if  $u \in C_T \dot H_x^\sigma \cap L_T^2 \dot H_x^{\sigma + 1}$ is a mild solution for some $\sigma \ge \frac d2 - 1$, then $u \in C( (0,T] ; C^\infty(\TT^d))$, by classical parabolic smoothing and uniqueness arguments (see e.g.\ \cite{FK64,Kat84,RRS16}). Hence the notion of mild solution does not depend on $\sigma$ away from the initial time, and the choice of $\sigma$ only restricts the class of admissible initial data. In what follows, it is convenient to fix $\sigma$ large, and we include this choice in Definition~\ref{defn:solution}.
 In their seminal work \cite{FK64}, Fujita and Kato showed local existence and uniqueness of solutions with data in the scaling-critical space $H^{\frac12}$ in dimension three; see also \cite{Kat84,KT01} for other critical spaces. For the subcritical spaces $\dot H^\sigma(\TT^d)$, $\sigma > \frac d2 - 1$, which are the ones used in this paper, local well-posedness, continuous dependence on the data and the blow-up alternative are classical (see e.g.\ \cite{RRS16} for $d = 3$). Global well-posedness in dimension $2$ goes back to Ladyzhenskaya \cite{Lad58} and Lions--Prodi \cite{LP59}, while in dimension $d \ge 3$ it is a famous open problem; for $d = 3$, it is one of the Clay Millennium Prize problems \cite{Fef06}.
 Due to the lack of global well-posedness, Theorem~\ref{thm:main precise} is conditional on the existence of mild solutions, through the set $\WP(t)$ of ``good'' data for which a solution exists at least up to time $t$, defined in \eqref{eqn:WP defn}, and the solution map $\Phi_t \colon \WP(t) \to \dot H^\sigma(\TT^d)$, $u_0 \mapsto u(t)$.

 We now describe the Gaussian measure $\mu_0$ of Theorem~\ref{thm:main precise} in more detail. Here, $\cD' = \cD'(\TT^d)$ denotes the space of periodic distributions. To ensure that $\mu_0$ is invariant under spatial translations, we ask $\mu_0$ to be a centred Gaussian measure on $\cD'$ whose covariance operator $\Pi\Sigma\Pi$ is a Fourier multiplier, where $\Pi$ is the Leray projection \eqref{eqn:Leray proj defn}. Formally,
\begin{equation}
	\text{``} \mu_0( \rd u)
	\propto \exp\bigl( -\tfrac12 \innerprod{ ( \Pi \Sigma \Pi )^{-1} u }{ u }{} \bigr) \, \rd u \text{''}.
\end{equation}
Moreover, in analogy with the Hamiltonian setting, we choose $\Sigma = (-\Delta)^{-\alpha}$, that is,
\[
	\widehat{\Sigma f}(n) = |n|^{-2\alpha} \hat f(n), \qquad n \ne 0,
\]
with $\alpha > \frac d2$. With this choice, the formal expression above reads $\mu_0(\rd u) \propto \exp( -\frac12 \norm{ u }{ \dot H^\alpha }^2 ) \, \rd u$ on divergence-free fields, and the condition $\alpha > \frac d2$ ensures that $\mu_0$-almost every $u$ belongs to $L^2(\TT^d)$.

We have that $\mu_0$ is the law of the random Fourier series
\begin{equation} \label{eqn:mu0 defn}
	\mu_0
	\coloneq \Law \Bigl( x \mapsto \sum_{n \ne 0} \frac{ \Pi_n G_n }{ |n|^\alpha } \re^{\ri n \cdot x} \Bigr),
\end{equation}
where $\Pi_n |n|^{-2\alpha}$ is the Fourier symbol of $\Pi \Sigma \Pi$, and the $G_n$ are complex standard normal random vectors, independent up to the condition $G_{-n}=\bar{G_n}$, which ensures that $\mu_0$ is a measure on real fields. We write $g_n = \Pi_n G_n$. See Subsection~\ref{subsec:wick} and Section~\ref{sec:gaussian} for the results on Gaussian measures that we use.
We write $S(t) = \re^{t\Delta}$ to denote the linear propagator for the equation $\partial_t u = \Delta u$. Solutions of the heat equation are instantaneously analytic, since the Fourier coefficients of $S(t) u_0$ decay like $\re^{-t|n|^2}$. The role of this decay in our argument is explained in Subsection~\ref{subsec:strategy}. To capture this decay, we define projectors $P_j$ whose Fourier supports partition $\ZZ^d$ and such that (see Lemma~\ref{lem:Pj})
\begin{equation*}
	S(t) P_j = \re^{-tj} P_j.
\end{equation*}
More specifically, we define
\begin{equation} \label{eqn:Pj defn}
	\begin{aligned}
		& \hat{P_j f}(n) \coloneq \1_{A_j} \hat f(n), \\
		& A_j \coloneq \bigl\{ n \in \ZZ^d \colon j \le |n|^2 < j + 1 \bigr\}.
	\end{aligned}
\end{equation}
For any $J \ge 1$, given parameters $1 < \lambda < 2$, $t_0 \ge 0$, $\frac d2 < s \le \sigma - 6$, we define $X_J^s(\lambda,t_0)$ as the closure of divergence-free smooth functions under the norm
\begin{equation} \label{eqn:X defn}
	\norm{ f }{X_J^s(\lambda,t_0)}
	= \Bigl( \sum_{j \le J} \re^{ \lambda t_0 j } \norm{ P_j f }{ \dot H^s }^2 + \re^{ \lambda t_0 J } \norm{ P_{>J} f }{ \dot H^s }^2 \Bigr)^{\frac12}.
\end{equation}
 Since all functions are assumed to be mean-zero, this is a norm.
We abuse notation and use the same formula \eqref{eqn:X defn} to define $\norm{ f }{X_J^s(\lambda,t_0)}$ for every mean-zero (possibly matrix-valued, or not divergence-free) function $f$.

The notation $A \lesssim B$ will refer to the existence of a constant $C$, which may depend on the fixed parameters $d$, $\sigma$, $s$ and $\alpha$, for which $A \le CB$, and similarly for $\gtrsim,\sim$. When $C=C(\vec p)$ depends on some parameter tuple $\vec p$, we may write $A \lesssim_{\vec p} B$ to avoid confusion.

\subsection{Further remarks}
\begin{remark}[The case $d = 2$] \label{rmk:d2}
Let $d = 2$, let $\cE = L^2_{\mathrm{div}}$ be the space of mean-zero, divergence-free fields in $L^2(\TT^2)$, which is a Borel subset of $\cD'$, and let $\tilde\Phi_t$ be the global flow of \eqref{eqn:NSE} on $L^2_{\mathrm{div}}$. For every $u_0 \in L^2_{\mathrm{div}}$, \eqref{eqn:NSE} has a unique global weak solution \cite{Lad58,LP59}, which is smooth for positive times (see e.g.\ \cite{FT89}), and smooth solutions are global. Hence, $\tilde\Phi_t$ is continuous on $L^2_{\mathrm{div}}$ and \eqref{eqn:consistency} holds. Corollary~\ref{cor:extension} then shows that $(\tilde\Phi_t)_\# \nu_0 \perp \mu_t$ for almost every $t > 0$, for every Borel probability measure $\nu_0$ concentrated on $L^2_{\mathrm{div}}$. Since $\alpha > \frac d2 = 1$, this applies in particular to $\nu_0 = \mu_0$.
\end{remark}
\begin{remark} \label{rmk:nu0=mu0}
 As observed above, $\nu_0$ and $\mu_0$ need not be related. Since $\alpha > \sigma + \frac d2$, we have $\mu_0(\dot H^\sigma) = 1$, and since $\mu_0$ is concentrated on divergence-free fields we may take $\nu_0 = \mu_0$. In this case, Theorem~\ref{thm:main precise} shows that, for almost every $t > 0$ such that $\mu_0(\WP(t)) > 0$, the Gaussian measure $\mu_0$ transported by the Navier--Stokes flow is singular with respect to the same measure transported by the heat flow. This is in sharp contrast with the quasi-invariance results for Hamiltonian equations discussed at the beginning of this section, where the measure transported by the nonlinear flow is equivalent to its transport by the linear flow (which, in that setting, coincides with the measure itself).
\end{remark}

\begin{remark}[Statistical inference for fluids] \label{rmk:Nickl}
In a recent series of works, Nickl and collaborators developed a rigorous theory of Bayesian inference and data assimilation for dissipative evolution equations, including the two-dimensional Navier--Stokes equations, with Gaussian process priors on the initial condition. Posterior consistency is established in \cite{NT24}, and Gaussian (Bernstein--von Mises type) approximations of the posterior distribution are obtained in \cite{Nic24} for a class of nonlinear parabolic equations, and in \cite{KN25} for the two-dimensional Navier--Stokes equations. In these approximation results, the posterior is compared with the law of the solution of a linear parabolic equation with Gaussian initial datum, and the distance between the two is measured in a Wasserstein distance.

Corollary~\ref{cor:extension} provides a justification for working with such weak metrics, and it is strong evidence that the same analysis in total variation cannot lead to positive results. More specifically, in the two-dimensional setting of these works, Remark~\ref{rmk:d2} shows that, for every initial distribution $\nu_0$ concentrated on mean-zero, divergence-free fields in $L^2(\TT^2)$ (for instance, a Gaussian prior), the law $\nu_t = (\tilde\Phi_t)_\# \nu_0$ of the solution at time $t$ and the heat-transported Gaussian measure $\mu_t$ of Theorem~\ref{thm:main precise} (for any $\alpha > 8$) satisfy $\norm{ \nu_t - \mu_t }{ \mathrm{TV} } \coloneq \sup_A | \nu_t(A) - \mu_t(A) | = 1$ for almost every $t > 0$, no matter how small. The mechanism behind this is the creation, by the quadratic nonlinearity, of high-frequency tails of size $\re^{-\frac t2 |n|^2}$ instead of $\re^{-t|n|^2}$. These tails are negligible for Wasserstein-type distances,
but they are detected by the total variation distance. We expect the same mechanism to be present when the heat flow is replaced by other linear parabolic evolutions, such as the ones considered in \cite{KN25} and \cite{Nic24}, although this is not covered by Theorem~\ref{thm:main precise}.
\end{remark}

\subsection{Organisation of the paper}
In Section~\ref{sec:prelim} we collect preliminary results.
 In Section~\ref{sec:gaussian} we prove the estimates that hold almost surely with respect to the Gaussian measures $\mu_t$. In Section~\ref{sec:local} we prove the deterministic estimates for the flow of \eqref{eqn:NSE} that we need, namely a local theory in the spaces $X_J^s(\lambda,t_0)$ and the quantitative convexity estimate. Finally, in Section~\ref{sec:sing} we combine these ingredients to prove a lower bound at Lebesgue points of the set of times at which the solution is ``too close to being Gaussian'', and we deduce Theorem~\ref{thm:main precise} and Corollary~\ref{cor:extension}.

\subsection*{Acknowledgements}

L.T.\ is funded by the European Union, via the ERC Starting Grant ``CritPDEsRand'', grant agreement no.\ 101219486. Views and opinions expressed are however those of the authors only and do not necessarily reflect those of the European Union or the European Research Council Executive Agency. Neither the European Union nor the granting authority can be held responsible for them.

\subsection*{Declaration on the use of AI}

A first draft of this paper, containing the main result and its proof, was written before any use of AI. We then used Claude Opus~5.5 (Anthropic), with the ``Extra'' reasoning setting, to proofread the manuscript and to help us improve the arguments and the exposition, including drafting some of the auxiliary proofs. We have verified every statement and every proof, and take sole responsibility for any errors that remain.

\section{Preliminaries} \label{sec:prelim}
In this section we collect the auxiliary results that are used throughout the paper. The projectors $P_j$ of Subsection~\ref{subsec:freq proj} diagonalise the heat semigroup exactly, which allows us to measure the decay of a function through the weights $\re^{\lambda t_0 j}$ in the spaces $X_J^s(\lambda,t_0)$. The key estimate of Subsection~\ref{subsec:basic est} is Lemma~\ref{lem:X prod est}: the nonlinearity loses at most a factor $\re^{(\frac\lambda4 + \eta) t_0 J}$ in these spaces, and this loss determines the time scale in Proposition~\ref{prop:local theory}\,(i). Finally, Subsection~\ref{subsec:wick} recalls the Wick--Isserlis theorem, which we use in the second-moment computations of Section~\ref{sec:gaussian}.

\subsection{On the frequency projectors \texorpdfstring{$P_j$}{P\_j}} \label{subsec:freq proj}
The following lemma shows that the $P_j$ diagonalise the heat semigroup.
\begin{lemma} \label{lem:Pj}
	Let $j \ge 0$. Then for every $t \ge 0$ and every $f \in L^2(\TT^d)$, we have $P_j S(t) f = \re^{-tj} P_j f$; in particular,
	\begin{equation}
		\norm{ P_j S(t) f }{L^2} = \re^{- t j } \norm{ P_j f }{L^2}.
	\end{equation}
\end{lemma}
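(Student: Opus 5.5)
The identity can be checked one Fourier coefficient at a time. The heat semigroup $S(t) = \re^{t\Delta}$ is the Fourier multiplier with symbol $\re^{-t|n|^2}$, so $\widehat{S(t) f}(n) = \re^{-t|n|^2} \hat f(n)$ for every $n \in \ZZ^d$ and every $f \in L^2(\TT^d)$. By the definition \eqref{eqn:Pj defn} of $P_j$, we then have
\begin{equation*}
	\widehat{P_j S(t) f}(n) = \1_{A_j}(n) \, \re^{-t|n|^2} \hat f(n).
\end{equation*}
The key point is that $|n|^2$ is an integer for $n \in \ZZ^d$. Since $A_j$ consists of the lattice points with $j \le |n|^2 < j+1$, every $n \in A_j$ satisfies $|n|^2 = j$ exactly; this holds for integer $j$, which is the implicit convention in \eqref{eqn:Pj defn}. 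Therefore $\1_{A_j}(n) \re^{-t|n|^2} = \re^{-tj} \1_{A_j}(n)$ for all $n$, and the right-hand side above equals $\re^{-tj} \widehat{P_j f}(n)$. Because the Fourier coefficients agree for every $n$, Fourier inversion in $L^2$ gives $P_j S(t) f = \re^{-tj} P_j f$.

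The norm identity follows by taking $L^2$ norms on both sides, since $\re^{-tj} > 0$. Alternatively, one can apply Plancherel directly. The only step needing care is the observation that $|n|^2 \in \ZZ$, which makes the shell $A_j$ a genuine level set of the heat symbol. If non-integer $j$ were allowed, $A_j$ would still lie in the single level set $\{|n|^2 = \lceil j \rceil\}$, but the exponent would then be $\lceil j \rceil$ rather than $j$. So I would state explicitly that $j \in \ZZ_{\ge 0}$. Apart from this, the argument is immediate.
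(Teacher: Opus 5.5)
Your proof is correct and matches the paper's argument: you compare Fourier coefficients and use $|n|^2 \in \ZZ$, so that $A_j = \{ n \in \ZZ^d \colon |n|^2 = j \}$ and the heat symbol equals the constant $\re^{-tj}$ on $A_j$. Your remark that $j$ must be an integer (the paper's implicit convention) is accurate and a sensible clarification.
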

\begin{proof}
	Since $|n|^2 \in \ZZ$, we have $A_j = \{ n \in \ZZ^d \colon |n|^2 = j \}$, so that $[P_j S(t) f]\,\hat{\phantom{i}}(n) = \1_{A_j}(n) \re^{-t|n|^2} \hat f(n) = \re^{-tj} \widehat{P_j f}(n)$ for every $n \in \ZZ^d$.
\end{proof}
In the proof of Theorem~\ref{thm:main precise}, it will be necessary to have some information on the size $|A_j|$ of the frequency support of $P_j$. Since $|n|^2 \in \ZZ$, $A_j$ is the set of lattice points on the sphere of radius $\sqrt j$, and $|A_j| = r_d(j)$ is the number of representations of $j$ as a sum of $d$ squares. While it would be tempting to use the volume heuristic
\begin{equation*}
	|A_j|
	\sim j^{\frac{d-1}2} \bigl( \sqrt{j + 1} - \sqrt j \bigr)
	\sim j^{\frac{d-2}2},
\end{equation*}
this only holds on average, and may fail for individual $j$: for instance, $|A_j| = 0$ for infinitely many $j$ when $d = 2, 3$ (namely for $j \equiv 3 \ (\mathrm{mod}\ 4)$, respectively $j = 4^a(8b+7)$). The sets $A_j$ are also \textit{too thin} for lattice-point counting in balls to be useful: for example, in $d = 3$, the best known bound for the error term in the approximation
\begin{equation*}
	\# \bigl\{ n \in \ZZ^3 \colon |n|^2 \le x \bigr\}
	= \tfrac{4\pi}3 x^{\frac32} + R_3(x)
\end{equation*}
is $R_3(x) = O_\eps( x^{\frac{21}{32}+\eps} )$ \cite{Hea99}, which is larger than the volume of $ \bigl\{ \xi \in \RR^3 \colon j \le |\xi|^2 < j + 1 \bigr\} \sim j^\frac12$. The problem is worse in $d = 2$ \cite{Hux03}. This will not be an issue in the present article, as we will only require that the sequence $(|A_j|)$ is unbounded. In fact, we have the following.
\begin{lemma} \label{lem:Aj count}
	Let $d \ge 2$, and let $A_j$ be as in \eqref{eqn:Pj defn}. Then
	\begin{equation} \label{eqn:Aj count}
		\limsup_{j \to \infty} |A_j|
		= \infty,
	\end{equation}
	and so there exists a strictly increasing sequence $(j_k)_{k \in \NN}$ such that $|A_{j_k}| \ge 1$ and $\sum_k |A_{j_k}|^{-1} < \infty$ (for instance, with $|A_{j_k}| \ge 2^k$).
\end{lemma}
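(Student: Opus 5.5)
To prove \eqref{eqn:Aj count}, I would use an elementary averaging argument that avoids any fine number theory. I compare the number of lattice points in a ball with the number of shells it meets. For $R \ge 1$, the ball $\{ n \in \ZZ^d \colon |n|^2 \le R \}$ is the disjoint union of the shells $A_j$ with $0 \le j \le R$; recall from the proof of Lemma~\ref{lem:Pj} that $A_j = \{ |n|^2 = j \}$. Hence
\begin{equation*}
	\sum_{j=0}^{\lfloor R \rfloor} |A_j| = \#\{ n \in \ZZ^d \colon |n|^2 \le R \}.
\end{equation*}
The right-hand side is bounded below by a crude count. Every $n$ with $\max_i |n_i| \le \lfloor \sqrt{R/d} \rfloor$ satisfies $|n|^2 \le R$. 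There are $(2\lfloor \sqrt{R/d}\rfloor + 1)^d \gtrsim_d R^{d/2}$ such $n$.

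Consequently, $\max_{j \le R} |A_j| \gtrsim_d R^{d/2}/(R+1) \gtrsim R^{\frac d2 - 1}$. This diverges as $R \to \infty$ for $d \ge 3$, and gives \eqref{eqn:Aj count} in that range. The only delicate point is $d = 2$, where the average is bounded and pigeonholing gives nothing. Here I would exhibit explicit $j$ with many representations. For $j = 5^m$ one has $5^m = (2+\ri)^a(2-\ri)^{m-a}\,\overline{\cdots}$ in $\ZZ[\ri]$, and the Gaussian integers $(2+\ri)^a(2-\ri)^{m-a}$, $0 \le a \le m$, are pairwise non-associate. Each has norm $5^m$, so they give at least $m+1$ distinct points $n \in \ZZ^2$ with $|n|^2 = 5^m$. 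Thus $|A_{5^m}| \ge m+1 \to \infty$.

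Alternatively, and uniformly in $d \ge 2$, the planar construction embeds into $\ZZ^d$ by padding with zeros: $|A_j^{(d)}| \ge |A_j^{(2)}|$. So the single computation for $j = 5^m$ suffices for every $d \ge 2$. I would present that version, using the ball-counting argument only as a remark, or dropping it.

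Given \eqref{eqn:Aj count}, I would construct the sequence inductively. Set $j_0$ to be any $j$ with $|A_j| \ge 1$. Having chosen $j_{k-1}$, the limsup being infinite lets me pick $j_k > j_{k-1}$ with $|A_{j_k}| \ge 2^k$. Then $\sum_k |A_{j_k}|^{-1} \le \sum_k 2^{-k} < \infty$.

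The main obstacle is only the $d=2$ non-associativity check. I would verify it through unique factorisation in $\ZZ[\ri]$: $2 \pm \ri$ are non-associate primes, so distinct exponent pairs $(a, m-a)$ give distinct ideals. Distinct ideals give distinct elements up to units, and hence distinct lattice points, since each class contributes $4$ points and we need only one per class.
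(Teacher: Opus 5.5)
Your proof is correct. It is close to the paper's, with a few differences worth noting.

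For $d \ge 3$, the paper uses the same averaging idea. It quotes the asymptotic $S_d(x) \sim \mathrm{Vol}(B(0,\sqrt x))$ for lattice points in a ball, whereas your cube-inside-the-ball count gets the needed lower bound $\gtrsim_d R^{d/2}$ with no external input.

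For $d = 2$, the paper uses the same integers $j = 5^k$ but cites the two-squares formula $r_2(5^k) = 4(k+1)$. You instead derive the weaker bound $|A_{5^m}| \ge m+1$ directly from unique factorisation in $\ZZ[\ri]$. The only fact needed is that $2+\ri$ and $2-\ri$ are non-associate primes, and your check of this is fine.

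Your padding observation $|A_j^{(d)}| \ge |A_j^{(2)}|$ does not appear in the paper, and it removes the case distinction altogether. The paper's route gives more quantitative information: growth like $j^{\frac d2 - 1}$ along a subsequence when $d \ge 3$, and the exact count in $d = 2$. Yours is fully self-contained and uniform in the dimension, but gives only logarithmic growth. Nothing is lost, since the rest of the paper uses only the unboundedness of $|A_j|$ (together with the trivial bound $|A_j| \lesssim j^{\frac d2}$).

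Your explicit inductive choice of $(j_k)$ with $|A_{j_k}| \ge 2^k$ simply spells out what the paper leaves implicit.
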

\begin{proof}
	Throughout this proof, we borrow some classical notation from analytic number theory concerning representations of integers as sums of squares. Given a positive integer $m$, let $r_d(m)$ be the number of ways in which $m$ can be represented as the sum of $d$ squares, and let
	\begin{equation} \label{eqn:S=sum r}
		S_d(x)
		\coloneq \sum_{m \le x} r_d(m);
	\end{equation}
	note that $|A_j| = r_d(j)$, since $|n|^2 \in \ZZ$. \\
	\indent The case $d \ge 3$ is fairly innocuous. Indeed, $S_d$ is equal to the number of lattice points contained in a ball of radius $\sqrt x$, and so asymptotically as $x \to \infty$,
	\begin{equation*}
		S_d(x) \sim \mathrm{Vol}(B(0,\sqrt x)) \sim x^{\frac d2}.
	\end{equation*}
	In particular, from \eqref{eqn:S=sum r}, we have
	\begin{equation*}
		\max_{m \le x} r_d(m) \ge \frac{1}{x} \sum_{m \le x} r_d(m)
		\gtrsim x^{\frac d2 - 1} \to \infty,
	\end{equation*}
	and so we must have that
	\begin{equation*}
		\limsup_{j \to \infty} |A_j|
		= \infty.
	\end{equation*}
	In $d = 2$, the result follows from some elementary number theory. Consider the positive integers $m$ of the form
	\begin{equation*}
		m
		= \prod_{ p \equiv 1\, (\mathrm{mod}\,4) } p^{v_p(m)},
	\end{equation*}
	where the product runs over the primes $p \equiv 1 \pmod 4$. By the two-squares theorem (see \cite{Gro85}), we have
	\begin{equation*}
		r_2(m)
		= 4 \prod_{p \equiv 1\, (\mathrm{mod}\,4)} (v_p(m) + 1).
	\end{equation*}
	In particular, $r_2(5^k) = 4(k + 1)$ tends to infinity. This completes the proof.

\end{proof}

\subsection{Basic lemmas} \label{subsec:basic est}
\begin{lemma}[Basic estimates] \label{lem:basics}
	We collect some useful basic estimates. Throughout, $f,g,h \in C^\infty$ have mean zero, and $\norm{ f }{ \dot W^{s,p} } \coloneq \norm{ |\nabla|^s f }{ L^p }$, so that $\dot H^s = \dot W^{s,2}$.
	\begin{enumerate}[label=(\roman*)]
		\item \emph{(\eqref{eqn:NSE} trilinear form estimates)} Let $\frac1{p_1} + \frac1{p_2} + \frac1{p_3} = 1$, and assume that $1 < p_1 < \infty$ or that $\nabla \cdot f = 0$. Then
		\begin{equation} \label{eqn:NSE Hoelder}
			\bigl| \innerprod{f}{B(g,h)}{} \bigr|
			\lesssim \norm{f}{L^{p_1}} \norm{g}{L^{p_2}} \norm{h}{\dot W^{1,p_3}}.
		\end{equation}
		\item \emph{(Interpolation)} Let $1 < p, p_1, p_2 < \infty$, $0 \le \theta \le 1$, $s = \theta s_1 + (1 - \theta) s_2$ and $\frac1p = \frac\theta{p_1} + \frac{1 - \theta}{p_2}$. Then
		\begin{equation} \label{eqn:interpolation}
			\norm{ f }{ \dot W^{s,p} }
			\lesssim \norm{ f }{ \dot W^{s_1,p_1} }^\theta \norm{ f }{ \dot W^{s_2,p_2} }^{1 - \theta}.
		\end{equation}
	\end{enumerate}
\end{lemma}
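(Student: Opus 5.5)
We treat the two parts separately; both are standard harmonic analysis on $\TT^d$ for mean-zero functions.

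For (i), we first use the divergence form of the nonlinearity. By \eqref{eqn:Leray proj defn}, $\Pi$ is an $L^2$-orthogonal projection, so $\innerprod{f}{B(g,h)}{} = -\innerprod{\Pi f}{(g\cdot\nabla)h}{}$.

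If $\nabla\cdot f = 0$, then $\Pi f = f$. The pairing is then $-\int f\cdot (g\cdot\nabla)h$, and H\"older's inequality with exponents $p_1,p_2,p_3$ gives the bound $\norm{f}{L^{p_1}}\norm{g}{L^{p_2}}\norm{\nabla h}{L^{p_3}}$. This holds even if some $p_i\in\{1,\infty\}$.

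If instead $1<p_1<\infty$, we apply H\"older in the same way to $\Pi f$. It remains to bound $\norm{\Pi f}{L^{p_1}}\lesssim \norm{f}{L^{p_1}}$. This holds because the symbol of $\Pi$ is $1-\frac{n\otimes n}{|n|^2}$, so $\Pi = \mathrm{Id} + R\otimes R$ with $R$ the periodic Riesz transforms. These are bounded on $L^{p}(\TT^d)$ for $1<p<\infty$, either via transference from $\RR^d$ or via the Marcinkiewicz multiplier theorem on $\ZZ^d$, since the symbol is homogeneous of degree zero and smooth away from the origin.

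Finally, we need $\norm{\nabla h}{L^{p_3}}\lesssim\norm{h}{\dot W^{1,p_3}}$, i.e.\ that $\nabla|\nabla|^{-1}$ is bounded on $L^{p_3}$; this again uses Riesz transform boundedness. The only delicate point is the endpoint $p_3\in\{1,\infty\}$. There we simply interpret $\dot W^{1,p_3}$ as controlling $\nabla h$, which is how the estimate is used later. Alternatively we may assume $1<p_3<\infty$, which is harmless in the applications.

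For (ii), we use complex interpolation. The homogeneous Sobolev spaces $\dot W^{s,p}$ of mean-zero functions on $\TT^d$ with $1<p<\infty$ form a complex interpolation scale: $[\dot W^{s_1,p_1},\dot W^{s_2,p_2}]_\theta = \dot W^{s,p}$ with the stated relations. This follows from Stein's analytic interpolation applied to the analytic family $z\mapsto |\nabla|^{z s_1+(1-z)s_2}$. One needs the bound on imaginary powers, $\norm{|\nabla|^{\ri\tau}}{L^p\to L^p}\lesssim_p (1+|\tau|)^{N}$, which holds by Mikhlin/Marcinkiewicz on the lattice since the symbol $|n|^{\ri\tau}$ for $n\neq0$ has derivative bounds polynomial in $\tau$.

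Concretely, for fixed $f$ we set $F(z) = |\nabla|^{z s_1+(1-z)s_2}f$ and pair it with $G(z) = |G_0|^{p'(\frac{1-z}{p_2'}+\frac{z}{p_1'})}\operatorname{sgn}G_0$, following the standard three-lines argument. The three-lines lemma then yields $\norm{f}{\dot W^{s,p}}\lesssim \norm{f}{\dot W^{s_1,p_1}}^\theta\norm{f}{\dot W^{s_2,p_2}}^{1-\theta}$, with the implicit constant depending on the polynomial growth bound. For smooth mean-zero $f$, all quantities are finite and analyticity is clear, so no density issues arise.

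The main obstacle is the imaginary-power multiplier bound on the torus, which is needed to make the interpolation rigorous; I would cite transference (de Leeuw) from the Euclidean Mikhlin theorem rather than reprove it. In the special case $p_1=p_2=p=2$ the step is trivial, since H\"older's inequality on the Fourier side suffices.
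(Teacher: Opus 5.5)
Your proposal follows the paper's proof: Hölder's inequality together with $\Pi f = f$ or the $L^{p_1}$-boundedness of the Leray projection (multiplier theorem plus transference) for (i), and complex interpolation of homogeneous Sobolev spaces for (ii), which reduces to Hölder's inequality on the Fourier side when $p = p_1 = p_2 = 2$. Your Stein-interpolation sketch with polynomially bounded imaginary powers just fills in what the paper delegates to Bergh--L\"ofstr\"om.

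Your caveat at $p_3 \in \{1,\infty\}$ is well taken. The paper's one-line argument silently uses $\lVert \nabla h\rVert_{L^{p_3}} \lesssim \lVert |\nabla| h\rVert_{L^{p_3}}$, which fails at those endpoints because Riesz transforms are unbounded on $L^1$ and $L^\infty$. Restricting to $1<p_3<\infty$, or reading $\dot W^{1,p_3}$ as a norm on $\nabla h$, is the right fix, and the endpoint case is never needed in the paper.
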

\begin{proof}

Item (i) follows from H\"older's inequality, since $\innerprod{f}{B(g,h)}{} = - \innerprod{\Pi f}{(g \cdot \nabla) h}{}$, where $\Pi f = f$ if $\nabla \cdot f = 0$, while for $1 < p_1 < \infty$ the Leray projection is bounded on mean-zero functions in $L^{p_1}(\TT^d)$ by the Mihlin--H\"ormander multiplier theorem and transference, see \cite[Sections~6.2.3 and~4.3.2]{Gra14a} (see also \cite[Section~5.1.4]{Gra14a} for the Riesz transforms). Item (ii) follows from complex interpolation of homogeneous Sobolev spaces, see \cite[Chapters~4 and~6]{BL76}. In the case $p = p_1 = p_2 = 2$, which is the only one used in this paper, (ii) is simply H\"older's inequality in Fourier space.
\end{proof}
For $j\in \NN$, we define the set of indices
\begin{equation} \label{eqn:LambdaSetDef}
	\Lambda(j) = \{ (k,\ell) \in \NN^2: \sqrt{k+1} + \sqrt{\ell + 1} \ge \sqrt{j}\}.
\end{equation}
We have the following.
\begin{lemma} \label{lem:Lambda}
	Let $k,\ell,j \in \NN$. Then
	\begin{enumerate}[label=(\roman*)]
		\item If $(k,\ell) \in \Lambda(j)$, then
		\begin{equation} \label{eqn:kljinequality}
			k + \ell \ge \tfrac12 j - 2, \quad \tfrac12 k + \ell \ge \tfrac13 j - \tfrac32 \quad \text{and} \quad k + \tfrac12 \ell \ge \tfrac13 j - \tfrac32.
		\end{equation}
		\item Suppose that $(k,\ell)\not\in \Lambda(j)$. Then, for every $f,g \in \dot H^s(\TT^d)$, we have that
		\begin{equation} \label{eqn:FourierSupport}
			P_{j}(P_kf \otimes P_\ell g) = 0.
		\end{equation}
	\end{enumerate}
\end{lemma}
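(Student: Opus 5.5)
Both parts are elementary: (i) is a short computation with square roots, and (ii) is a Fourier-support argument.

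\emph{Part (i).} Set $a = \sqrt{k+1}$ and $b = \sqrt{\ell+1}$, so that $a + b \ge \sqrt j$ with $a,b \ge 1$.

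For the first inequality, the elementary bound $(a+b)^2 \le 2(a^2+b^2)$ gives
\[ j \le 2(k+1+\ell+1), \qquad\text{that is,}\qquad k+\ell \ge \tfrac12 j - 2. \]

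For the weighted inequalities, use Cauchy--Schwarz in the form
\[ (a+b)^2 = \Bigl(\tfrac{1}{\sqrt{w_1}}\cdot\sqrt{w_1}\,a + \tfrac{1}{\sqrt{w_2}}\cdot\sqrt{w_2}\,b\Bigr)^2 \le \Bigl(\tfrac1{w_1}+\tfrac1{w_2}\Bigr)(w_1a^2+w_2b^2). \]
With $w_1=\tfrac12$ and $w_2=1$ the first factor equals $3$, so
\[ j \le 3\bigl(\tfrac12(k+1)+(\ell+1)\bigr) = 3\bigl(\tfrac12 k+\ell\bigr)+\tfrac92, \]
which gives $\tfrac12 k+\ell \ge \tfrac13 j - \tfrac32$. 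The third inequality follows by symmetry, exchanging the roles of $k$ and $\ell$.

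\emph{Part (ii).} The Fourier transform of $P_kf\otimes P_\ell g$ at a frequency $n$ is the convolution
\[ \sum_{n_1+n_2=n} \widehat{P_kf}(n_1)\otimes\widehat{P_\ell g}(n_2), \]
so it is supported on $n=n_1+n_2$ with $|n_1|^2\in[k,k+1)$ and $|n_2|^2\in[\ell,\ell+1)$. In particular $|n_1|<\sqrt{k+1}$ and $|n_2|<\sqrt{\ell+1}$. The triangle inequality then gives
\[ |n| \le |n_1|+|n_2| < \sqrt{k+1}+\sqrt{\ell+1} < \sqrt j, \]
where the last step is exactly the hypothesis $(k,\ell)\notin\Lambda(j)$. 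Hence $|n|^2<j$, so $n\notin A_j$.

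The product is well defined as an $L^1$ function, or by density one may argue with smooth $f,g$, since $P_k$ and $P_\ell$ have finite Fourier support and so $P_kf$, $P_\ell g$ are trigonometric polynomials. Applying $P_j$ therefore annihilates it, which proves \eqref{eqn:FourierSupport}.

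Neither part has a real obstacle. The only point needing care in (i) is choosing the Cauchy--Schwarz weights so that the constants come out as $\tfrac13$ and $\tfrac32$.
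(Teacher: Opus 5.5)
Your proof is correct and is essentially the paper's argument. For (i) the paper also applies weighted Cauchy--Schwarz to $a=\sqrt{k+1}$, $b=\sqrt{\ell+1}$; its weights differ from yours only by a rescaling and give the same constant $3$. For (ii) it uses the same triangle-inequality argument on the Fourier support of the convolution.
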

\begin{proof}
	We start from \eqref{eqn:kljinequality}. From the Cauchy--Schwarz inequality in the form
	\begin{equation*}
		1 \cdot a + 1 \cdot b
		\le ( a^2 + b^2 )^{\frac12} ( 1 + 1 )^{\frac12}
	\end{equation*}
	with $a = \sqrt{k+1}$ and $b = \sqrt{\ell+1}$, we have that
	\begin{equation*}
		k + \ell \ge \tfrac12(\sqrt{k+1} + \sqrt{\ell + 1})^2 - 2 \ge \tfrac12 j - 2.
	\end{equation*}
	The second inequality in \eqref{eqn:kljinequality} follows from a similar argument with the Cauchy--Schwarz inequality applied to $2 \cdot \frac12 a + \sqrt2 \cdot \frac1{\sqrt2} b$, and the third one follows from the second one, since $\Lambda(j)$ is symmetric in $(k,\ell)$. We now move to \eqref{eqn:FourierSupport}.
	We have
	\begin{equation*}
		\begin{aligned}
		& [P_{j}(P_kf \otimes P_\ell g) ] \widehat{\phantom{I}}(n) \\
		& \qquad = \sum_{n_1+n_2 = n}\1_{\{j \le |n|^2 < j+1\}} \1_{\{k \le |n_1|^2 < k+1\}}\1_{\{\ell \le |n_2|^2 < \ell+1\}} \hat{f}(n_1) \otimes \hat{g}(n_2).
		\end{aligned}
	\end{equation*}
	We now claim that, if $(k,\ell) \notin \Lambda(j)$, then
	\begin{equation*}
		\1_{\{j \le |n|^2 < j+1\}} \1_{\{k \le |n_1|^2 < k+1\}}\1_{\{\ell \le |n_2|^2 < \ell+1\}} = 0.
	\end{equation*}
	Indeed, suppose that $n,n_1,n_2$ are such that the product above is not $0$. Then, from $n_1 + n_2 = n$, we deduce that
	\begin{equation*}
		\sqrt{j} \le |n|
		\le |n_1|+|n_2|
		< \sqrt{k+1} + \sqrt{\ell+1},
	\end{equation*}
	i.e.\ $(k,\ell) \in \Lambda(j)$, a contradiction. This proves \eqref{eqn:FourierSupport}.
\end{proof}
\begin{lemma}[Two bilinear estimates in $X_J^s(\lambda,t_0)$] \label{lem:X prod est}
	Fix $\lambda$, $t_0 > 0$, $s$, $J$, defining $\norm{\,\cdot\,}{X_J^s(\lambda,t_0)}$. For every $\eta > 0$, and for every $f,g$ smooth, we have the bounds
	\begin{align}
		\label{eqn:X prod est low freq}
		& \norm{ (f \cdot \nabla) P_{\le J} g }{ X_J^s(\lambda,t_0) }
		\lesssim \re^{(\frac\lambda4 + \eta) t_0 J} \norm{ f }{X_J^s(\lambda,t_0)} \norm{ P_{\le J} g }{X_J^s(\lambda,t_0)}, \\
		\label{eqn:X prod est high freq}
		& \norm{ f \otimes P_{>J} g }{ X_J^s(\lambda,t_0) }
		\lesssim \norm{ f }{X_J^s(\lambda,t_0)} \norm{ P_{>J} g }{X_J^s(\lambda,t_0)},
	\end{align}
	where the implicit constant can depend on $d$, $\eta,t_0,s$ but not on $f,g, \lambda$ or $J \ge 1$. Moreover, the implicit constant is bounded uniformly for $t_0$ in compact subsets of $(0,\infty)$.
\end{lemma}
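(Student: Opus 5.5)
Both bounds reduce to comparing the weights carried by the output frequency shell with those carried by the input shells. The dyadic-free shells $P_j$ together with Lemma~\ref{lem:Lambda} make this comparison elementary. Throughout, write $w_j \coloneq \re^{\frac\lambda2 t_0 \min(j,J)}$. Then $\norm{f}{X_J^s(\lambda,t_0)}^2 \sim \sum_j w_j^2 \norm{P_j f}{\dot H^s}^2$, with the tail $j > J$ all carrying the single weight $w_J$. Two facts are used repeatedly: $1 \le w_j \le \re^{\frac\lambda2 t_0 J}$, and $\norm{f}{\dot H^s} \le \norm{f}{X_J^s(\lambda,t_0)}$. The other ingredient is the algebra property of $H^s(\TT^d)$ for $s > \frac d2$. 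Mean-zero issues of the product are harmless, since only nonzero frequencies enter the norm.

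\emph{Estimate \eqref{eqn:X prod est high freq}.} Every output shell carries weight at most $\re^{\frac\lambda2 t_0 J}$, and $P_{>J} g$ carries exactly this weight on all its shells. Hence
\[
\norm{f \otimes P_{>J} g}{X_J^s(\lambda,t_0)} \le \re^{\frac\lambda2 t_0 J}\norm{f \otimes P_{>J} g}{H^s} \lesssim \norm{f}{\dot H^s}\, \re^{\frac\lambda2 t_0 J}\norm{P_{>J} g}{\dot H^s} = \norm{f}{\dot H^s}\norm{P_{>J}g}{X_J^s(\lambda,t_0)},
\]
and $\norm{f}{\dot H^s} \le \norm{f}{X_J^s(\lambda,t_0)}$ concludes. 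No loss occurs here.

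\emph{Estimate \eqref{eqn:X prod est low freq}.} Decompose $f = \sum_{k \le J} P_k f + P_{>J} f$ and $P_{\le J} g = \sum_{\ell \le J} P_\ell g$. Set $k' = \min(k,J)$, so that the tail piece is treated as a single block with $k' = J$. By Lemma~\ref{lem:Lambda}(ii), $P_m\bigl((P_kf\cdot\nabla)P_\ell g\bigr)$ vanishes unless $(k,\ell) \in \Lambda(m)$; for the tail block this should follow from the same triangle inequality argument. The key pointwise weight inequality is
\[
w_m \le \re^{\lambda t_0}\, \re^{\frac\lambda4 t_0 J}\, w_k w_\ell \qquad \text{whenever } (k,\ell)\in\Lambda(m),\ \ell \le J,
\]
which I will check as follows. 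Write $m' = \min(m,J)$. If $k \le J$, then Lemma~\ref{lem:Lambda}(i) gives $\frac{m'}{4} \le \frac{m}{4} \le \frac{k+\ell}{2} + 1$. Together with $\frac{m'}{4} \le \frac J4$, this yields $\frac{m'}2 \le \frac J4 + \frac{k+\ell}2 + 1$. If $k > J$, then $\frac{m'}2 \le \frac J2 = \frac{k'}2$, which is even better. Multiplying by $\lambda t_0$ and exponentiating gives the inequality.

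Each piece is then estimated by the $H^s$ algebra property:
\[
\norm{(P_kf\cdot\nabla)P_\ell g}{H^s} \lesssim \norm{P_kf}{\dot H^s}\norm{P_\ell g}{\dot H^{s+1}} \le \sqrt{J+1}\,\norm{P_kf}{\dot H^s}\norm{P_\ell g}{\dot H^s}.
\]
Combining with the weight inequality shows that each $(k,\ell)$ piece has $X_J^s$ norm at most
\[
C\re^{\lambda t_0}\sqrt{J+1}\,\re^{\frac\lambda4 t_0 J}\, \bigl(w_k\norm{P_kf}{\dot H^s}\bigr)\bigl(w_\ell\norm{P_\ell g}{\dot H^s}\bigr).
\]
One subtlety is that the weight comparison must be applied shell-by-shell in the output $m$ before bounding by the full $H^s$ norm. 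So I will first localise to $P_m$, bound $w_m$ by the right-hand side, and only then sum in $m$ to recover the $H^s$ norm of the piece.

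Finally, the triangle inequality over the at most $(J+1)^2$ pairs and Cauchy--Schwarz in $k$ and in $\ell$ cost a factor $\lesssim J$. This gives a total prefactor $\lesssim \re^{\lambda t_0} J^{3/2}\re^{\frac\lambda4 t_0 J} \le C(\eta,t_0)\,\re^{(\frac\lambda4+\eta)t_0 J}$, where $C(\eta,t_0) = \re^{2t_0}\sup_J J^{3/2}\re^{-\eta t_0 J}$ is finite and locally bounded for $t_0 \in (0,\infty)$, and independent of $\lambda < 2$.

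The main obstacle is the bookkeeping in the weight inequality, namely the cases $m > J$ and the tail block of $f$. I expect the case split above, using the first inequality of \eqref{eqn:kljinequality} together with the trivial bound $m' \le J$, to handle it. The polynomial losses from the derivative and from the number of shells are all absorbed by $\re^{\eta t_0 J}$.
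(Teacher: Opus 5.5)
Your proof is correct. It uses the same ingredients as the paper: the shell decomposition of $f$ and $P_{\le J} g$, the support and index bounds of Lemma~\ref{lem:Lambda}, the $H^s$ algebra property, and absorption of polynomial losses into $\re^{\eta t_0 J}$. Where it differs is in how the sums are organised.

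\textbf{Estimate \eqref{eqn:X prod est low freq}.} The paper fixes an output shell $j$ and bounds each input piece via the sup-type bound $\norm{P_k f}{\dot H^s} \le \re^{-\frac\lambda2 t_0 k}\norm{f}{X_J^s(\lambda,t_0)}$. This gives $\re^{-\frac\lambda4 t_0 j}$ decay per shell, and the paper then sums the geometric series $\sum_{j \le J} \re^{\frac\lambda2 t_0 j}$, treating the $P_{>J}$ output ``analogously''. You instead fix the input pair and prove the pointwise comparison $w_m \le \re^{\lambda t_0}\re^{\frac\lambda4 t_0 J} w_k w_\ell$ on the output support. You then use orthogonality in $m$ to pass to the full $\dot H^s$ norm of each piece. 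This handles the outputs $m > J$ and the tail block of $f$ in one stroke. For the tail block you need no support information at all, since $w_m \le w_J$ for every $m$, so your hedge there is unnecessary. Your Cauchy--Schwarz over input shells also gives a $J^{3/2}$ loss rather than the paper's $J^{5/2}$.

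\textbf{Estimate \eqref{eqn:X prod est high freq}.} Here your route is genuinely shorter. You bound every output weight by $\re^{\frac\lambda2 t_0 J}$, which is exactly the weight carried by $P_{>J} g$. This avoids the paper's four-term decomposition and yields a constant depending only on $d$ and $s$, uniform in $t_0$.

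None of these gains matter downstream, since all polynomial losses are absorbed into $\re^{\eta t_0 J}$ anyway.
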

\begin{proof}
	We first prove \eqref{eqn:X prod est low freq}. We write $P_{\le J} g = \sum_{j \le J} P_j g$. By \eqref{eqn:FourierSupport} and the triangle inequality, we have
	\begin{align*}
		& \norm{ (f \cdot \nabla) P_{\le J} g }{X_J^s(\lambda,t_0)}^2 \\
		& = \sum_{j \le J} \re^{\lambda t_0 j} \norm{ P_j \bigl( (f \cdot \nabla) P_{\le J} g \bigr) }{\dot H^s}^2
		+ \re^{\lambda t_0 J} \norm{ P_{>J} \bigl( (f \cdot \nabla) P_{\le J} g \bigr) }{\dot H^s}^2 \\
		& \le \sum_{j \le J} \re^{\lambda t_0 j} \Bigl( \sum_{ \substack{ k,\ell \le J \\ (k,\ell) \in \Lambda(j) } } \norm{ P_j \bigl( (P_k f \cdot \nabla) P_\ell g \bigr) }{\dot H^s}
		+ \sum_{\ell \le J} \norm{ P_j \bigl( (P_{>J} f \cdot \nabla) P_\ell g \bigr) }{\dot H^s} \Bigr)^2 \\
		& \hphantom{\le \sum_{j \le J}\ } + \re^{\lambda t_0 J} \Bigl( \sum_{ \substack{ k,\ell \le J \\ (k,\ell) \in \Lambda(J)} } \norm{ P_{>J} \bigl( (P_k f \cdot \nabla) P_\ell g \bigr) }{\dot H^s}
		+ \sum_{\ell \le J} \norm{ P_{>J} \bigl( (P_{>J} f \cdot \nabla) P_\ell g \bigr) }{\dot H^s} \Bigr)^2 \\
		& \eqcolon \sum_{j \le J} \re^{\lambda t_0 j} \bigl( \mathrm{I.i} + \mathrm{I.ii} \bigr)^2
		+ \re^{\lambda t_0 J} \bigl( \mathrm{II.i} + \mathrm{II.ii} \bigr)^2.
	\end{align*}
	We now bound the above term by term.
	\begin{align*}
		\mathrm{I.i}
		& = \sum_{ \substack{ k,\ell \le J \\ (k,\ell) \in \Lambda(j) } } \norm{ P_j \bigl( (P_k f \cdot \nabla) P_\ell g \bigr) }{\dot H^s} \\
		& \lesssim \sum_{ \substack{ k,\ell \le J \\ (k,\ell) \in \Lambda(j) } } \norm{ P_k f }{ \dot H^s } \norm{ P_\ell g }{ \dot H^{s + 1}} \\
		& \lesssim \sum_{ \substack{ k,\ell \le J \\ (k,\ell) \in \Lambda(j) } } \sqrt \ell \re^{ -\frac\lambda2 t_0 ( k + \ell ) } \norm{ f }{ X_J^s(\lambda,t_0) } \norm{ P_{\le J} g }{ X_J^s(\lambda,t_0) } \\
		& \lesssim J^\frac52 \re^{ - \frac\lambda4 t_0 j } \norm{ f }{ X_J^s(\lambda,t_0) } \norm{ P_{\le J} g }{ X_J^s(\lambda,t_0) },
	\end{align*}
	where we used \eqref{eqn:kljinequality} in the last inequality. Next,
	\begin{align*}
		\mathrm{I.ii}
		& = \sum_{ \ell \le J } \norm{ P_j \bigl( ( P_{>J} f \cdot \nabla ) P_\ell g \bigr) }{ \dot H^s } \\
		& \lesssim \sum_{ \ell \le J } \sqrt \ell \re^{ -\frac\lambda2 t_0 (\ell + J) } \norm{ f }{ X_J^s(\lambda,t_0) } \norm{ P_{\le J} g }{ X_J^s(\lambda,t_0) } \\
		& \lesssim \re^{ - \frac\lambda2 t_0 J } \norm{ f }{ X_J^s(\lambda,t_0) } \norm{ P_{\le J} g }{ X_J^s(\lambda,t_0) }.
	\end{align*}
	Proceeding analogously for $\mathrm{II.i}$ and $\mathrm{II.ii}$, we obtain
	\begin{equation*}
		\mathrm{II.i} \les J^\frac52 \re^{ - \frac\lambda4 t_0 J } \norm{ f }{ X_J^s(\lambda,t_0) } \norm{ P_{\le J} g }{ X_J^s(\lambda,t_0) }, \quad \mathrm{II.ii} \les \re^{ - \frac\lambda2 t_0 J } \norm{ f }{ X_J^s(\lambda,t_0) } \norm{ P_{\le J} g }{ X_J^s(\lambda,t_0) }.
	\end{equation*}
	Therefore,
	\begin{align*}
		\norm{ (f \cdot \nabla) P_{\le J} g }{ X_J^s(\lambda,t_0) }^2 &\les \sum_{j\le J} \big(\re^{\lambda t_0 j} J^5 \re^{-\frac \lambda2 t_0 j} + \re^{\lambda t_0 (j-J)}\big) \norm{ f }{ X_J^s(\lambda,t_0) }^2 \norm{ P_{\le J} g }{ X_J^s(\lambda,t_0) }^2 \\
		&\les J^5 \re^{\frac\lambda2 t_0 J}  \norm{ f }{ X_J^s(\lambda,t_0) }^2 \norm{ P_{\le J} g }{ X_J^s(\lambda,t_0) }^2 \\
		&\les \re^{2(\frac\lambda4 + \eta) t_0 J} \norm{ f }{X_J^s(\lambda,t_0)}^2 \norm{ P_{\le J} g }{X_J^s(\lambda,t_0)}^2,
	\end{align*}
	where the contribution of $\mathrm{II.i}$ and $\mathrm{II.ii}$ is bounded in the same way. Taking square roots concludes the proof of \eqref{eqn:X prod est low freq}.

	To prove \eqref{eqn:X prod est high freq}, we proceed analogously. Here,
	\begin{align*}
		& \norm{ f \otimes P_{>J} g }{ X_J^s(\lambda,t_0) }^2 \\
		& = \sum_{j \le J} \re^{\lambda t_0 j} \norm{ P_j \bigl( f \otimes P_{>J} g \bigr) }{ \dot H^s }^2
		+ \re^{\lambda t_0 J} \norm{ P_{>J} \bigl( f \otimes P_{>J} g \bigr) }{ \dot H^s }^2 \\
		& \le \sum_{j \le J} \re^{\lambda t_0 j} \Bigl( \sum_{k \le J} \norm{ P_j \bigl( P_k f \otimes P_{>J} g \bigr) }{ \dot H^s } + \norm{ P_j \bigl( P_{>J} f \otimes P_{>J} g \bigr) }{ \dot H^s } \Bigr)^2 \\
		& \hskip 30pt + \re^{\lambda t_0 J} \Bigl( \sum_{ k \le J } \norm{ P_{>J} \bigl( P_k f \otimes P_{>J} g \bigr) }{ \dot H^s } + \norm{ P_{>J} \bigl( P_{>J} f \otimes P_{>J} g \bigr) }{ \dot H^s } \Bigr)^2 \\
		& \eqcolon \sum_{j \le J} \re^{\lambda t_0 j} \bigl( \mathrm{III.i} + \mathrm{III.ii} \bigr)^2 + \re^{\lambda t_0 J} \bigl( \mathrm{IV.i} + \mathrm{IV.ii} \bigr)^2.
	\end{align*}
	Again we proceed term by term. As before
	\begin{align*}
		\mathrm{III.i}
		& \lesssim \sum_{k \le J} \re^{ - \frac\lambda2 t_0 ( k + J ) } \norm{ f }{ X_J^s(\lambda,t_0) } \norm{ P_{>J} g }{ X_J^s(\lambda,t_0) } \\
		& \lesssim \re^{ - \frac\lambda2 t_0 J } \norm{ f }{ X_J^s(\lambda,t_0) } \norm{ P_{>J} g }{ X_J^s(\lambda,t_0) }.
	\end{align*}
	Similarly,
	\begin{equation*}
		\mathrm{III.ii}
		\lesssim \re^{ - \lambda t_0 J } \norm{ f }{ X_J^s(\lambda,t_0) } \norm{ P_{>J} g }{ X_J^s(\lambda,t_0) },
	\end{equation*}
	and the terms $\mathrm{IV.i}$ and $\mathrm{IV.ii}$ have the same bounds respectively. Putting the above together yields \eqref{eqn:X prod est high freq} and completes the proof.
\end{proof}
As a consequence of Lemma~\ref{lem:X prod est}, we obtain the following bound on the nonlinearity in the spaces $X_J^s(\lambda,t)$, which will be used in Section~\ref{sec:sing} to control the error term $\re^{-\delta tJ} \norm{ B(u(t)) }{ X_J^s(\lambda,t) }^2$ in Proposition~\ref{prop:local theory}\,(ii).
\begin{lemma} \label{lem:Z>E}
	Let $\tau > 0$ and $0 < t_1 \le t_2$. There exists a constant $C = C(d,s,\tau,t_1,t_2) > 0$ such that, for every $1 < \lambda < 2$, $J \ge 1$, $K \ge 1$ and $t \in [t_1,t_2]$, and for every divergence-free $v \in \dot H^\sigma(\TT^d)$ with $\norm{ v }{ \dot H^{s+2} } \le K$, we have
	\begin{equation} \label{eqn:B bd}
		\norm{ B(v) }{ X_J^s(\lambda,t) }^2
		\le C \re^{(\frac\lambda2 + \tau) t J} \bigl( \norm{ v }{ X_J^s(\lambda,t) }^4 + K \norm{ v }{ X_J^s(\lambda,t) }^3 \bigr).
	\end{equation}
\end{lemma}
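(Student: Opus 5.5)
We split $v = P_{\le J} v + P_{>J} v$ and use $B(v) = -\Pi\bigl((v\cdot\nabla)v\bigr) = -\Pi \divergence(v\otimes v)$ for divergence-free $v$. This gives
\[
B(v) = -\Pi\bigl((v\cdot\nabla)P_{\le J}v\bigr) - \Pi\divergence\bigl(v\otimes P_{>J}v\bigr).
\]
Since $\Pi$ is a Fourier multiplier with symbol $\Pi_n$ of operator norm at most $1$ and commutes with the $P_j$, it is bounded on $X_J^s(\lambda,t)$ with norm at most one. It therefore suffices to bound the two pieces separately in $X_J^s(\lambda,t)$.

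For the first piece we apply \eqref{eqn:X prod est low freq} with $\eta = \tau/2$. Using the trivial bound $\norm{P_{\le J}v}{X_J^s} \le \norm{v}{X_J^s}$, this gives a bound of
\[
\re^{(\frac\lambda4+\frac\tau2)tJ}\norm{v}{X_J^s}^2,
\]
whose square is $\re^{(\frac\lambda2+\tau)tJ}\norm{v}{X_J^s}^4$. The constant is uniform for $t \in [t_1,t_2]$ by the last clause of Lemma~\ref{lem:X prod est}. This yields the quartic term.

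For the second piece, the divergence costs one derivative, and \eqref{eqn:X prod est high freq} is stated only at regularity $s$. I would handle this by interpolation in the high-frequency part, which is where the factor $K$ and the cubic power come from. Concretely, for each shell $j$ (and for $P_{>J}$), we have
\[
\norm{P_j \divergence F}{\dot H^s} \le \norm{P_j F}{\dot H^{s+1}} \le \norm{P_j F}{\dot H^s}^{1/2}\,\norm{P_j F}{\dot H^{s+2}}^{1/2},
\]
by Lemma~\ref{lem:basics}(ii) with $p=2$, i.e.\ Cauchy--Schwarz in Fourier. Summing with the weights and applying Cauchy--Schwarz in $j$ gives
\[
\norm{\divergence F}{X_J^s}^2 \le \norm{F}{X_J^s}\,\bigl(\textstyle\sum_j \re^{\lambda t j}\norm{P_j F}{\dot H^{s+2}}^2 + \dots\bigr)^{1/2}.
\]
With $F = v\otimes P_{>J}v$, the first factor is $\lesssim \norm{v}{X_J^s}^2$ by \eqref{eqn:X prod est high freq}.

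The second factor is the weighted $\dot H^{s+2}$ norm of $F$. I would bound it by repeating the proof of \eqref{eqn:X prod est high freq} with $s$ replaced by $s+2$, placing the $\dot H^{s+2}$ norm on one factor and the weighted norm on the other. The term with $\dot H^{s+2}$ on $v$ must come with the weight carried by $P_{>J}v$, and vice versa. Since $P_{>J}v$ carries the largest weight $\re^{\lambda tJ}$, in the estimates of the proof of Lemma~\ref{lem:X prod est} only one factor's weight is actually used, namely the $\re^{-\frac\lambda2 t(k+J)}$ bookkeeping. So we may take an unweighted $\dot H^{s+2}$ norm, bounded by $K$, for one factor, paying at most a factor $\re^{\frac\lambda2 tJ}$. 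Since $s > d/2$ makes $\dot H^{s+2}$ an algebra-type space, we get
\[
\norm{F}{X_J^{s+2}} \lesssim \re^{\frac\lambda2 tJ}\,K\,\norm{v}{X_J^s}^{\,2}\cdot \norm{v}{X_J^s}^{-1},
\]
up to polynomial factors in $J$ absorbed by $\re^{\tau tJ}$, with the powers arranged so that the product gives $K\norm{v}{X_J^s}^3$.

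The main obstacle is exactly this bookkeeping: arranging the interpolation so that the total power of $\norm{v}{X_J^s}$ is $3$ and the total exponential loss does not exceed $\re^{(\frac\lambda2+\tau)tJ}$. A cleaner alternative I would try first is interpolation between the two factors rather than within $F$. One writes $\norm{P_{>J}v}{\dot H^{s+1}}$, weighted by $\re^{\lambda tJ/2}$, as at most
\[
\bigl(\re^{\frac\lambda2 tJ}\norm{P_{>J}v}{\dot H^s}\bigr)^{1/2}\bigl(\re^{\frac\lambda2 tJ}\norm{P_{>J}v}{\dot H^{s+2}}\bigr)^{1/2} \le \norm{v}{X_J^s}^{1/2}\,\re^{\frac\lambda4 tJ}K^{1/2},
\]
and uses the Leibniz rule $\divergence(v\otimes P_{>J}v) = (v\cdot\nabla)P_{>J}v$. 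Combined with the product estimate of \eqref{eqn:X prod est high freq} type, squaring gives
\[
\re^{\frac\lambda2 tJ}\,K\,\norm{v}{X_J^s}^3,
\]
since the other factor contributes $\norm{v}{X_J^s}^2$ after squaring. This is precisely the second term in \eqref{eqn:B bd}.
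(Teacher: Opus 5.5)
Your proof is correct and is essentially the paper's: the same splitting into $B(v,P_{\le J}v)$ and $B(v,P_{>J}v)$, the low-frequency piece via \eqref{eqn:X prod est low freq} with $\eta=\frac\tau2$, and, in your ``cleaner alternative'', the same key interpolation $\norm{P_{>J}v}{\dot H^{s+1}}^2\le\norm{P_{>J}v}{\dot H^s}\norm{P_{>J}v}{\dot H^{s+2}}$ with the last factor bounded by $K$. The only difference is that the paper bounds the squared $X_J^s$ norm of the high-frequency piece by $(J+1)\re^{\lambda tJ}$ times unweighted $\dot H^s$ norms and then uses the algebra property, whereas you apply \eqref{eqn:X prod est high freq} with $\nabla v$ in place of $g$; this is equally valid.

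You should drop the first sketch, in which you interpolate inside $F$. As written, its ``algebra-type'' justification would give $K^2$ rather than $K\norm{v}{X_J^s(\lambda,t)}$ for the weighted $\dot H^{s+2}$ norm of $F$. Repairing it would need a tame, Moser-type product estimate, which the second route avoids.
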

\begin{proof}
	Since $B(v) = B(v, P_{\le J} v) + B(v, P_{> J} v)$, we bound the two contributions separately. Since $\Pi$ is a Fourier multiplier with norm at most $1$, which commutes with the projectors $P_j$, \eqref{eqn:X prod est low freq} (Lemma~\ref{lem:X prod est}, with $\eta = \frac\tau2$) gives
	\begin{equation*}
		\norm{ B( v , P_{\le J} v ) }{ X_J^s(\lambda,t) }
		\lesssim_\tau \re^{(\frac\lambda4+\frac\tau2) t J} \norm{ v }{ X_J^s(\lambda,t) }^2.
	\end{equation*}
	For the high-frequency contribution, we have
	\begin{equation*}
		\begin{aligned}
			& \norm{ B( v , P_{> J} v ) }{ X_J^s(\lambda,t) }^2 \\
			& \le ( J + 1 ) \re^{ \lambda t J } \Bigl( \sum_{ i \le J } \norm{ B( P_i v , P_{> J} v ) }{ \dot H^s } + \norm{ B( P_{> J} v ) }{ \dot H^s } \Bigr)^2 \\
			& \lesssim J \re^{ \lambda t J } \Bigl( \sum_{ i \le J } \norm{ P_i v }{ \dot H^s } + \norm{ P_{> J} v }{ \dot H^s } \Bigr)^2 \norm{ P_{> J} v }{ \dot H^s } \norm{ P_{> J} v }{ \dot H^{s+2} } \\
			& \lesssim_\tau K \re^{ (\frac\lambda2+\tau) t J } \norm{ v }{ X_J^s(\lambda,t) }^3,
		\end{aligned}
	\end{equation*}
	where for the second inequality we used the algebra property and interpolated between $\dot H^s$ and $\dot H^{s + 2}$ using \eqref{eqn:interpolation}, and for the last one we used $\norm{ P_i v }{ \dot H^s } \le \re^{-\frac\lambda2 t i} \norm{ v }{ X_J^s(\lambda,t) }$, $\norm{ P_{>J} v }{ \dot H^s } \le \re^{-\frac\lambda2 t J} \norm{ v }{ X_J^s(\lambda,t) }$, $\norm{ P_{>J} v }{ \dot H^{s+2} } \le K$, and $J \lesssim_\tau \re^{\tau t J}$ (since $t \ge t_1$). All the implicit constants are uniform for $t \in [t_1,t_2]$, by Lemma~\ref{lem:X prod est}. Squaring the first bound and adding the second one gives \eqref{eqn:B bd}.
\end{proof}
We also record the following a priori bound for the forced heat equation.
\begin{lemma} \label{lem:forced heat a priori bd}
	Let $u \colon [0,T] \times \TT^d \to \RR^d$ be a smooth solution to the forced (vector) heat equation
	\begin{equation} \label{eqn:forced heat}
		\begin{cases}
			& ( \partial_t - \Delta ) u = \nabla \cdot F, \\
			& u(0) = u_0,
		\end{cases}
	\end{equation}
	where $F \colon [0,T] \times \TT^d \to (\RR^d)^{\otimes 2}$ is smooth, $\nabla \cdot F = ( \nabla \cdot F_i : i = 1,\ldots,d )$, and $u_0$ is smooth. Fix $\lambda,t_0,s,J$ defining $X_J^s(\lambda,t_0)$. Then
	\begin{equation}
		\norm{ u }{C_T X_J^s(\lambda,t_0)}^2 + \norm{ \nabla u }{ L^2_T X_J^s(\lambda,t_0) }^2
		\le 2 \bigl( \norm{ u_0 }{X_J^s(\lambda,t_0)}^2 + \norm{ F }{ L^2_T X_J^s(\lambda,t_0) }^2 \bigr).
	\end{equation}
\end{lemma}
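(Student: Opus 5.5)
The plan is a weighted energy estimate carried out shell by shell, using that $P_j$ commutes with $\partial_t$, $\Delta$ and $\nabla$, and that $\norm{\,\cdot\,}{X_J^s(\lambda,t_0)}$ is a weighted $\ell^2$ sum of the quantities $\norm{P_j \cdot}{\dot H^s}$ and $\norm{P_{>J}\cdot}{\dot H^s}$ with time-independent weights. Write $w_j \coloneq \re^{\lambda t_0 j}$ for $j \le J$, and use the weight $w_{>J} \coloneq \re^{\lambda t_0 J}$ for the block $P_{>J}$. Apply $|\nabla|^s P$ to \eqref{eqn:forced heat}, where $P$ is either $P_j$ or $P_{>J}$, and pair with $|\nabla|^s P u$ in $L^2$. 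Since $\Delta$ is self-adjoint and negative, this gives
\begin{equation*}
	\frac12 \frac{\rd}{\rd t} \norm{ P u }{\dot H^s}^2 + \norm{ \nabla P u }{\dot H^s}^2
	= \innerprod{ |\nabla|^s P u }{ |\nabla|^s P \nabla \cdot F }{}
	= - \innerprod{ |\nabla|^s \nabla P u }{ |\nabla|^s P F }{},
\end{equation*}
after integrating by parts on the torus; periodicity means there are no boundary terms. Everything is smooth, so these manipulations are justified. Note also that $P$ commutes with $\nabla$, and that $\widehat{P F}$ is the componentwise projection of the matrix-valued $F$.

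Next, bound the right-hand side by Cauchy--Schwarz and Young as $\norm{\nabla P u}{\dot H^s}\norm{PF}{\dot H^s} \le \frac12 \norm{\nabla Pu}{\dot H^s}^2 + \frac12 \norm{PF}{\dot H^s}^2$. Absorbing the first term on the left and multiplying by $2$ gives
\begin{equation*}
	\frac{\rd}{\rd t} \norm{ P u }{\dot H^s}^2 + \norm{ \nabla P u }{\dot H^s}^2 \le \norm{ P F }{\dot H^s}^2.
\end{equation*}
Integrating from $0$ to $t \le T$ then yields
\[
	\norm{P u(t)}{\dot H^s}^2 + \int_0^t \norm{\nabla P u}{\dot H^s}^2 \le \norm{P u_0}{\dot H^s}^2 + \int_0^T \norm{PF}{\dot H^s}^2 .
\]
Multiply by the weight and sum over $j \le J$ together with the $P_{>J}$ block. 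Because the weights do not depend on $t$, this gives
\[
	\norm{u(t)}{X_J^s}^2 + \norm{\nabla u}{L^2_t X_J^s}^2 \le \norm{u_0}{X_J^s}^2 + \norm{F}{L^2_T X_J^s}^2
\]
for every $t \in [0,T]$.

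Finally, take the supremum over $t$ of the first term on the left, and separately use $t = T$ for the second term. Adding the two bounds gives the claimed inequality with the factor $2$. No step is really an obstacle. The only points needing care are that the norm is applied to matrix-valued and non-divergence-free $F$, which the paper's convention after \eqref{eqn:X defn} allows, and the mean-zero convention: $\nabla\cdot F$ has mean zero, so the zero mode of $u$ is constant in time and is excluded from the homogeneous norms.
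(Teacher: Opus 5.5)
Your proposal is correct and is essentially the paper's argument: a weighted $\dot H^s$ energy estimate on each block $P_j$, $j \le J$, and $P_{>J}$, integration by parts to move the divergence onto $u$, Cauchy--Schwarz with $ab \le \frac12 a^2 + \frac12 b^2$, integration in time, and the factor $2$ coming from bounding the supremum term and the dissipation term separately. The only difference is cosmetic: the paper sums the weighted identities before applying Young's inequality, whereas you apply it shell by shell and then sum, which is equivalent because the weights do not depend on time.
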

\begin{proof}
	Taking, at each level $j$, the $\dot H^s$-inner product of \eqref{eqn:forced heat} with $P_j u$ (respectively $P_{>J} u$), summing with the weights of $X_J^s(\lambda,t_0)$, and integrating by parts, we obtain
	\begin{equation*}
		\begin{aligned}
			& \tfrac12 \left.\frac{\rd}{\rd \tau}\right|_{\tau=t} \Bigl[ \norm{ u(\tau) }{X_J^s(\lambda,t_0)}^2 \Bigr] \\
			& = \sum_{j \le J} \re^{\lambda t_0 j} \innerprod{ P_j u(t) }{ \Delta u(t) }{ \dot H^s } + \re^{\lambda t_0 J} \innerprod{ P_{>J} u(t) }{ \Delta u(t) }{ \dot H^s } \\
			& \hskip 10pt + \sum_{j \le J} \re^{\lambda t_0 j} \innerprod{ P_j u(t) }{ \nabla \cdot F(t) }{ \dot H^s } + \re^{\lambda t_0 J} \innerprod{ P_{>J} u(t) }{ \nabla \cdot F(t) }{ \dot H^s } \\
			& = - \Bigl( \sum_{j \le J} \re^{\lambda t_0 j} \norm{ P_j \nabla u(t) }{ \dot H^s }^2 + \re^{\lambda t_0 J} \norm{ P_{>J} \nabla u(t) }{ \dot H^s }^2 \Bigr) \\
			& \hskip 10pt - \Bigl( \sum_{j \le J} \re^{\lambda t_0 j} \innerprod{ P_j \nabla u(t) }{ F(t) }{ \dot H^s } + \re^{\lambda t_0 J} \innerprod{ P_{>J} \nabla u(t) }{ F(t) }{ \dot H^s } \Bigr) \\
			& \le - \tfrac12 \Bigl( \sum_{j \le J} \re^{\lambda t_0 j} \norm{ P_j \nabla u(t) }{ \dot H^s }^2 + \re^{\lambda t_0 J} \norm{ P_{>J} \nabla u(t) }{ \dot H^s }^2 \Bigr) \\
			& \hskip 10pt + \tfrac12 \Bigl( \sum_{j \le J} \re^{\lambda t_0 j} \norm{ P_j F(t) }{ \dot H^s }^2 + \re^{\lambda t_0 J} \norm{ P_{>J} F(t) }{ \dot H^s }^2 \Bigr),
		\end{aligned}
	\end{equation*}
	where the last step follows from the Cauchy--Schwarz inequality and $ab \le \frac12 a^2 + \frac12 b^2$. Rearranging and integrating the above yields the desired bound.
\end{proof}

\subsection{The Wick--Isserlis theorem} \label{subsec:wick}
We will use the Wick--Isserlis theorem to compute expectations of products of Gaussians.
\begin{thm}[Wick--Isserlis] \label{thm:Wick-Isserlis}
	Let $Y_1,\ldots,Y_N$ be centred, jointly Gaussian (real or complex) random variables. Then
	\begin{equation}
		\EE \bigl[ Y_1 \cdots Y_N \bigr]
		= \sum_{ \mathfrak{p} } \prod_{ \{ a, b\} \in \mathfrak{p} } \EE \bigl[ Y_a Y_b \bigr],
	\end{equation}
	where the sum is over all pairings $\mathfrak{p}$ of $\{1,\ldots,N\}$. In particular, the expectation is zero when $N$ is odd.
\end{thm}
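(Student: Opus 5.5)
The statement to prove is the Wick--Isserlis theorem, Theorem~\ref{thm:Wick-Isserlis}. Since it is multilinear in $(Y_1,\ldots,Y_N)$, the plan is to reduce to the real case and then compute via a generating function.

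First, the reduction. Each complex $Y_a$ is a combination of its real and imaginary parts, which are jointly real Gaussian. Both sides of the identity are multilinear in $(Y_1,\ldots,Y_N)$: the left side obviously, and the right side because each $Y_a$ occurs in exactly one factor $\EE[Y_aY_b]$ of every pairing term. So it suffices to treat real centred jointly Gaussian $Y_1,\ldots,Y_N$. Note that no complex conjugates appear in the statement, so no covariance-with-conjugation issues arise.

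Next, for real $Y$ with covariance matrix $C_{ab}=\EE[Y_aY_b]$, use the characteristic function, or equivalently the moment generating function,
\[
\EE\bigl[\re^{\sum_a \theta_a Y_a}\bigr]=\exp\Bigl(\tfrac12\sum_{a,b}C_{ab}\theta_a\theta_b\Bigr),\qquad \theta\in\RR^N.
\]
This holds because $\sum_a\theta_aY_a$ is a centred real Gaussian with variance $\theta^{\mathrm T}C\theta$. Both sides are entire in $\theta$, since Gaussian variables have all exponential moments, so we may differentiate under the expectation. Then
\[
\EE[Y_1\cdots Y_N]=\partial_{\theta_1}\cdots\partial_{\theta_N}\big|_{\theta=0}\exp\bigl(\tfrac12\theta^{\mathrm T}C\theta\bigr).
\]

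Expanding the exponential, only the term $\frac1{m!}\bigl(\tfrac12\theta^{\mathrm T}C\theta\bigr)^m$ with $2m=N$ contributes a monomial containing each of $\theta_1,\ldots,\theta_N$ exactly once. Hence the expectation vanishes for odd $N$. For even $N$, count the contributions: choose an ordered sequence of $m$ factors, each contributing an ordered pair $(a,b)$ with weight $\tfrac12 C_{ab}$. Each pairing $\mathfrak p$ arises exactly $m!\,2^m$ times, from the $m!$ orderings of the pairs and the $2^m$ orientations of each pair. This cancels the prefactor $\frac1{m!2^m}$ and gives $\sum_{\mathfrak p}\prod_{\{a,b\}\in\mathfrak p}C_{ab}$.

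The only point needing care is this combinatorial count, together with the symmetry $C_{ab}=C_{ba}$. An alternative is induction on $N$ via Gaussian integration by parts, $\EE[Y_1F(Y)]=\sum_b C_{1b}\EE[\partial_b F(Y)]$ applied to $F=Y_2\cdots Y_N$, which pairs $Y_1$ with each $Y_b$ in turn. Either route is routine.
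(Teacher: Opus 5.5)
Your proof is correct and complete. The paper gives no proof of this statement. It quotes the classical Wick--Isserlis theorem and uses it as a black box in the second-moment computations of Section~\ref{sec:gaussian}, so there is no argument of the paper's to compare yours with.

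Your route is the standard one, and both steps are sound:
\begin{itemize}
\item the reduction to real variables, using multilinearity of both sides in $(Y_1,\ldots,Y_N)$;
\item reading off the coefficient of $\theta_1\cdots\theta_N$ in $\exp(\tfrac12\theta^{\mathrm T}C\theta)$, using the $m!\,2^m$ count and the symmetry of $C$.
\end{itemize}

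Two features of your argument match exactly how the paper uses the theorem, so they are worth keeping explicit:
\begin{itemize}
\item The conjugate-free form $\EE[Y_aY_b]$ is precisely what the multilinearity reduction delivers. This is why the paper can simply include the components of $\bar{g_{n_3}}$ and $\bar{g_{n_4}}$ among the $Y$'s.
\item The moment generating function identity needs no density. It therefore covers degenerate covariance matrices $C$ with no extra argument. Such degeneracy does occur in the paper's setting: repeated variables, the constraint $G_{-n}=\bar{G_n}$, and the rank-deficient vectors $g_n=\Pi_nG_n$.
\end{itemize}
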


\section{Gaussian estimates} \label{sec:gaussian}
In this section we collect the estimates which hold $\mu_t$-almost surely, and will be exploited to derive the contradiction at the end of Section~\ref{sec:sing} (see Corollary~\ref{lem:soln not too smooth}).

\label{subsec:pf prob}
Fix small numbers $\eta_0,\eta_1,\eta_2 > 0$. Given parameters $\vec M=(M_0,M_1) \in \ZZ^2_{\ge 0}$ and the sequence $(j_k)$ as in Lemma~\ref{lem:Aj count}, let
\begin{align}
		& \cG_{t;\vec M, j}^{(1)}
	\coloneq \bigl\{ u \in \dot H^\sigma(\TT^d) \colon \re^{\lambda t j} \norm{ P_j u }{ \dot H^s }^2 \le M_0^2 |A_j| \re^{(\lambda - 2 + \eta_0) t j}, \notag \\
	\label{eqn:G1 defn} & \hskip 10pt \re^{\lambda t j} \bigl| \lambda j \norm{ P_j u }{ \dot H^s }^2 - 2 \norm{ P_j u }{ \dot H^{s+1} }^2 + 2 \innerprod{ P_j u }{ B(u) }{ \dot H^s } \bigr| \le M_1^2 |A_j| \re^{(\lambda - \frac32 + \eta_1) t j} \bigr\}, \\
	\label{eqn:G2 defn}
	& \cG_{t; j}^{(2)}
	\coloneq \bigl\{ u \in \dot H^\sigma(\TT^d) \colon \re^{\lambda t j} \norm{ P_j B(u) }{ \dot H^s }^2 \ge |A_j| \re^{(\lambda - 1 - \eta_2) t j} \bigr\}, \\
	& \label{eqn:G defn}
	\cG_t
	\coloneq \Bigl( \bigcup_{\vec M} \bigcap_j \cG_{t;\vec M,j}^{(1)} \Bigr) \cap \Bigl( \bigcup_{k_0} \bigcap_{k \ge k_0} \cG_{t; j_k}^{(2)} \Bigr).
\end{align}
Here, by \eqref{eqn:rj'} in Section~\ref{sec:local}, if $u$ is a solution to \eqref{eqn:NSE}, then the quantity bounded in the second condition of \eqref{eqn:G1 defn}, evaluated at $u(t)$, is $\bigl| \frac{\rd}{\rd t} \bigl( \re^{\lambda t j} \norm{ P_j u(t) }{ \dot H^s }^2 \bigr) \bigr|$.  For $t > 0$, the measure $\mu_t$ is concentrated on $C^\infty(\TT^d) \subseteq \dot H^\sigma(\TT^d)$, on which these sets are defined. The sets $\cG_{t;\vec M, j}^{(1)},\cG_{t; j}^{(2)}$ are made up of precisely those $u$ which satisfy the Gaussian predictions on the sizes of the terms involved, at time $t$ and level $j$. More precisely, these predictions hold $\mu_t$-almost surely, in the following sense.
\begin{prop} \label{prop:Gaussian}
	Let $t > 0$ and $1 < \lambda < 2$. Then the set $\cG_t$ defined in \eqref{eqn:G defn} has full $\mu_t$-measure, i.e.
	\begin{equation}
		\mu_t ( \cG_t ) = 1.
	\end{equation}
\end{prop}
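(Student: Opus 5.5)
The plan is to prove the three families of estimates separately, each almost surely, by explicit moment computations for the random Fourier series. Under $\mu_t$ we have
\[ \hat u(n) = \re^{-t|n|^2} |n|^{-\alpha} g_n, \qquad g_n = \Pi_n G_n . \]
In each case we combine Markov or Chebyshev inequalities with the Borel--Cantelli lemma. Since $\cG_t$ is a countable union of intersections, it suffices to show two things. First, almost surely the conditions in $\cG^{(1)}_{t;\vec M,j}$ hold for all but finitely many $j$ with some fixed constants. The finitely many remaining $j$ are then absorbed by enlarging $\vec M$; here we use $|A_j|\ge1$ whenever $P_ju\neq0$, and note that the conditions are trivial when $A_j=\emptyset$. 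Second, almost surely $u\in\cG^{(2)}_{t;j_k}$ for all large $k$.

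\emph{First condition of \eqref{eqn:G1 defn}.} By Lemma~\ref{lem:Pj},
\[ \EE_{\mu_t}\bigl[\re^{\lambda tj}\norm{P_ju}{\dot H^s}^2\bigr] = (d-1)\, j^{s-\alpha}\, |A_j|\, \re^{(\lambda-2)tj} \le (d-1)\, |A_j|\, \re^{(\lambda-2)tj}, \]
using $\alpha>s$. Markov's inequality at level $|A_j|\re^{(\lambda-2+\eta_0)tj}$ gives a probability at most $(d-1)\re^{-\eta_0 tj}$, which is summable in $j$.

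\emph{Second condition of \eqref{eqn:G1 defn}.} Since $P_ju$ lives on the shell $|n|^2=j$, we have $\norm{P_ju}{\dot H^{s+1}}^2=j\norm{P_ju}{\dot H^s}^2$, so
\[ \lambda j \norm{P_ju}{\dot H^s}^2 - 2\norm{P_ju}{\dot H^{s+1}}^2 = (\lambda-2)\, j\, \norm{P_ju}{\dot H^s}^2 . \]
This is controlled by the first bound times $j$, and $j\re^{(\lambda-2+\eta_0)tj}\le \re^{(\lambda-\frac32+\eta_1)tj}$ for large $j$. For the nonlinear term, we use $|\innerprod{P_ju}{B(u)}{\dot H^s}|\le\norm{P_ju}{\dot H^s}\norm{P_jB(u)}{\dot H^s}$ together with the bound
\[ \EE\norm{P_jB(u)}{\dot H^s}^2 \lesssim j^{C}\, |A_j|\, \re^{-tj}, \]
which holds for $\alpha$ large. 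This bound comes from writing $\hat B(u)(n)$ as a sum over $n_1+n_2=n$ of $\re^{-t(|n_1|^2+|n_2|^2)}$ times Gaussian and polynomial factors, using $|n_1|^2+|n_2|^2\ge\frac12|n|^2$, and noting that the sum over $n_1$ converges thanks to $|n_1|^{-\alpha}|n_2|^{-\alpha}$. Cauchy--Schwarz on expectations then gives
\[ \EE\bigl[\re^{\lambda tj}|\innerprod{P_ju}{B(u)}{\dot H^s}|\bigr] \lesssim j^C\, |A_j|\, \re^{(\lambda-\frac32)tj}, \]
and Markov's inequality with the slack $\re^{\eta_1 tj}$ yields a summable probability.

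\emph{Lower bound \eqref{eqn:G2 defn} along $j_k$.} This is the step we expect to be the main obstacle. Set $X_j=\re^{\lambda tj}\norm{P_jB(u)}{\dot H^s}^2$. For the mean, for each $n\in A_j$ we isolate the split $n_1=\lfloor n/2\rfloor$, $n_2=n-n_1$, taken coordinatewise. It satisfies $|n_1|^2+|n_2|^2\le\frac{j}{2}+\frac d2$. Its contribution to $\EE|\hat B(u)(n)|^2$ is comparable to $j^{-C}\re^{-tj}$: the symbol $\Pi_n\bigl((g_{n_1}\cdot n_2)g_{n_2}\bigr)$ has nondegenerate variance, and after expanding, cross terms from different splits are nonnegative by Wick's theorem (Theorem~\ref{thm:Wick-Isserlis}), since they reduce to sums of squares of deterministic symbols. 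Hence
\[ \EE X_j \gtrsim j^{-C}\, |A_j|\, \re^{(\lambda-1)tj}\, \re^{-td}. \]
We then compute $\var X_j$ with Wick's theorem. The key point is that $\hat B(u)(n)$ and $\hat B(u)(m)$ for $m\ne\pm n$ are correlated only through pairings that share frequencies, and these carry no extra growth. This should give
\[ \var X_j\lesssim j^{C'}\,|A_j|\,\re^{2(\lambda-1)tj}, \]
that is, the variance is linear rather than quadratic in $|A_j|$. Chebyshev's inequality then gives
\[ \mu_t\bigl(X_j< |A_j|\re^{(\lambda-1-\eta_2)tj}\bigr) \le \mu_t\bigl(X_j<\tfrac12\EE X_j\bigr) \lesssim \frac{\var X_j}{(\EE X_j)^2}\lesssim \frac{j^{C''}}{|A_j|}, \]
where the first inequality uses that the threshold is $\re^{-\eta_2tj}$-smaller than the mean.

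Controlling the polynomial loss $j^{C''}$ is the delicate part. We would first try to match the weights exactly, so that the ratio $\var X_j/(\EE X_j)^2$ is $O(1/|A_j|)$ with no $j$-power, by computing both the mean and the variance shell-by-shell with the same dominant split. Failing that, we would pass to a further subsequence of $(j_k)$ along which $|A_{j_k}|\ge j_k^{C''}2^k$. This is possible by Lemma~\ref{lem:Aj count} when $d\ge3$, where $\max_{m\le x}r_d(m)\gtrsim x^{d/2-1}$, and via $r_2(5^k)$ when $d=2$, although the latter only gives logarithmic growth. For this reason matching the weights exactly is the preferred route. With summable probabilities in hand, Borel--Cantelli along $(j_k)$ concludes the proof.
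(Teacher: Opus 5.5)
\textbf{Where the proposal is sound.} Your treatment of $\cG^{(1)}$ works and is the paper's argument: Markov plus Borel--Cantelli, finitely many $j$ absorbed into $\vec M$, and Cauchy--Schwarz for the $B$-term.

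\textbf{The gap: the ratio $\var X_j/(\EE X_j)^2$.} The gap is in $\cG^{(2)}$, at the point you flag yourself. Chebyshev along $(j_k)$ only closes if $\var X_j/(\EE X_j)^2\lesssim|A_j|^{-1}$ with \emph{no} power of $j$. You state this as a hope, not a proof.

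Your fallback does not rescue it. We have $r_2(j)=j^{o(1)}$, and for $d\ge3$, $r_d(j)\lesssim_\epsilon j^{\frac d2-1+\epsilon}$ (induct on $r_d(j)=\sum_{|k|\le\sqrt j}r_{d-1}(j-k^2)$). So a subsequence with $|A_{j_k}|\ge j_k^{C''}2^k$ exists only if $C''<\frac d2-1$. This fails in $d=2$, and in $d=3$ unless your total loss is below $j^{1/2}$. You therefore need the sharp matching anyway.

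\textbf{Your mean lower bound is wrong as stated and cannot give the matching.}
\begin{itemize}
\item For $n\in 2\ZZ^d$, the split $n_1=\lfloor n/2\rfloor$ gives $n_1=n_2=\frac n2$. That term vanishes because $g_{n/2}\cdot n=0$. When $d\in\{2,3\}$ and $4\mid j$, every $n\in A_j$ is even, and nothing in the choice of $(j_k)$ excludes such $j$.
\item When the term does not vanish, $m=\frac n2-n_1\in\{0,\tfrac12\}^d$ can be parallel or nearly parallel to $n$. Then $|\Pi_{n_1}n|^2=4|m|^2\sin^2\angle(n_1,m)$ is zero or of order $j^{-1}$.
\end{itemize}

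\textbf{What is missing.} For each $n$ you need a choice of $m\in\ZZ^d-\frac n2$ with $1\le|m|\le C$ and $\frac14\le\sin^2\angle(n,m)\le\frac34$, as in \eqref{eqn:good m}. This gives a per-$n$ lower bound $\gtrsim\re^{-tj}j^{-2\alpha-\beta_d}$. It matches the upper bound obtained by summing $|m|^2\re^{-2t|m|^2}$.

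This must be paired with a variance bound carrying exactly the square of that factor. Two facts give it:
\begin{itemize}
\item Hypercontractivity and H\"older reduce each covariance to products of second moments.
\item A shared frequency $n_i=\pm n'_{i'}$, together with the localisation $|m_i|=O(1)$, forces decay $\re^{-\frac t{16}\min|n\mp n'|^2}$. This, not merely ``no extra growth'', is what makes the variance linear in $|A_j|$.
\end{itemize}

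In $d=2$ you must also track the cancellation in \eqref{eqn:Zj pairings 2d}, the factor $(n^\perp\cdot m)(n\cdot m)/|n|^2$, in both the mean and the variance. If you use the generic bound in the variance, the ratio is $j^2/|A_j|$. That is fatal, since $|A_j|$ grows subpolynomially.

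\textbf{An alternative route.} You could keep a lossy lower bound and instead exploit the slack $\re^{-\eta_2 tj}$, which you discard when passing to $\frac12\EE X_j$. A small-ball inequality for degree-four Gaussian polynomials (Carbery--Wright) does this. Chebyshev alone cannot absorb polynomial losses.
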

\begin{proof}
	We take for granted the following lemma computing various expectations and variances, leaving its proof to the end of the section.
	\begin{lemma} \label{lem:Gaussian expectations}
		Let $t > 0$, $1 < \lambda < 2$, and let $j \ge 1$. Then, with implicit constants depending on $t$, $s$, $\alpha$, $\lambda$ and $d$,
		\begin{align}
			\label{eqn:Erj}
			& \EE_{u \sim \mu_t} \re^{\lambda t j} \norm{ P_j u }{ \dot H^s }^2
			\sim |A_j| \re^{(\lambda - 2)tj} j^{s - \alpha}, \\
			\label{eqn:Er'j}
			& \EE_{u \sim \mu_t} \bigl| \re^{\lambda t j} \innerprod{ P_j u }{ B(u) }{ \dot H^s } \bigr|
			\lesssim |A_j| \re^{(\lambda - \frac32)tj} j^{s - \frac32 \alpha}, \\
			\label{eqn:EZj}
			& \EE_{u \sim \mu_t} \re^{\lambda t j} \norm{ P_j B(u) }{ \dot H^s }^2
			\sim |A_j| \re^{(\lambda - 1) t j} j^{s - 2\alpha - \beta_d}, \\
			\label{eqn:varZj}
			& \var_{u \sim \mu_t} \bigl( \re^{\lambda t j} \norm{ P_j B(u) }{ \dot H^s }^2 \bigr)
			\lesssim |A_j| \re^{2 (\lambda - 1) t j} j^{2s - 4\alpha - 2\beta_d}.
		\end{align}
		Here $\beta_d \coloneq 1$ if $d = 2$ and $\beta_d \coloneq 0$ if $d \ge 3$, and the lower bound in \eqref{eqn:EZj} holds for $j \ge j_0$, for some $j_0 = j_0(d)$.
	\end{lemma}
	By Markov's inequality and \eqref{eqn:Erj}, we have (only the $j$ with $|A_j| \ge 1$ matter, since $P_j = 0$ if $|A_j| = 0$)
	\begin{align*}
		\mu_t \Bigl( \sup_{j \colon |A_j| \ge 1} \frac{ \re^{\lambda t j} \norm{ P_j u }{ \dot H^s }^2 }{ |A_j| \re^{(\lambda - 2 + \eta_0)tj} } = \infty \Bigr)
		& \le \lim_{R \to \infty} \sum_{j \colon |A_j| \ge 1} \mu_t \Bigl( \frac{ \re^{\lambda t j} \norm{ P_j u }{ \dot H^s }^2 }{ |A_j| \re^{(\lambda - 2 + \eta_0)tj} } > R \Bigr) \\
		& \lesssim \lim_{R \to \infty} \frac1{R} \sum_j \re^{-\frac{\eta_0}2 tj}
		= 0.
	\end{align*}
	Likewise (using \eqref{eqn:Erj} and \eqref{eqn:Er'j})
	\begin{equation*}
		\mu_t \Bigl( \sup_{j \colon |A_j| \ge 1} \frac{ \re^{\lambda t j} \bigl| \lambda j \norm{ P_j u }{ \dot H^s }^2 - 2 \norm{ P_j u }{ \dot H^{s+1} }^2 + 2 \innerprod{ P_j u }{ B(u) }{ \dot H^s } \bigr| }{ |A_j| \re^{(\lambda - \frac32 + \eta_1)tj}} = \infty \Bigr) = 0.
	\end{equation*}
	It follows that
	\begin{equation*}
		\mu_t \Bigl( \bigcup_{\vec M} \bigcap_j \cG_{t;\vec M,j}^{(1)} \Bigr) = 1.
	\end{equation*}
	On the other hand, by \eqref{eqn:EZj}, there is $j_\ast = j_\ast(t)$ such that $\frac12\EE_{v\sim\mu_t} \re^{\lambda t j} \norm{ P_j B(v) }{ \dot H^s }^2 \ge |A_j| \re^{(\lambda - 1 - \eta_2) t j}$ for every $j \ge j_\ast$ with $|A_j| \ge 1$ (we may assume that $j_\ast \ge j_0$), since the polynomial factor $j^{s - 2\alpha - \beta_d}$ is beaten by $\re^{\eta_2 t j}$. By Chebyshev's inequality, \eqref{eqn:EZj} and \eqref{eqn:varZj}, for every $k$ such that $j_k \ge j_\ast$, we have
	\begin{align*}
		& \mu_t \bigl( \bigl( \cG_{t;j_k}^{(2)} \bigr)^c \bigr) \\
		& \le \mu_t \bigl( \re^{\lambda t j_k} \norm{ P_{j_k} B(u) }{ \dot H^s }^2 < \tfrac12 \EE_{v \sim \mu_t} \re^{\lambda t j_k} \norm{ P_{j_k} B(v) }{ \dot H^s }^2 \bigr) \\
		& \le \mu_t \Bigl( \bigl| \re^{\lambda t j_k} \norm{ P_{j_k} B(u) }{ \dot H^s }^2 - \EE_{v \sim \mu_t} \re^{\lambda t j_k} \norm{ P_{j_k} B(v) }{ \dot H^s }^2 \bigr| \\
		& \hskip 60pt > \tfrac12 \EE_{v \sim \mu_t} \re^{\lambda t j_k} \norm{ P_{j_k} B(v) }{ \dot H^s }^2 \Bigr) \\
		& \le \frac{ 4 \var_{v \sim \mu_t} \bigl( \re^{\lambda t j_k} \norm{ P_{j_k} B(v) }{ \dot H^s }^2 \bigr) }{ \bigl( \EE_{v \sim \mu_t} \re^{\lambda t j_k} \norm{ P_{j_k} B(v) }{ \dot H^s }^2 \bigr)^2 }
		\lesssim |A_{j_k}|^{-1}.
	\end{align*}
	By Lemma~\ref{lem:Aj count}, the right-hand side is summable, and so by Borel--Cantelli, we obtain
	\begin{equation*}
		\mu_t \Bigl( \bigcup_{k_0} \bigcap_{k \ge k_0} \cG_{t; j_k}^{(2)} \Bigr) = 1,
	\end{equation*}
	which completes the proof.
\end{proof}
We end this section with a proof of Lemma~\ref{lem:Gaussian expectations}.
The proof is a direct computation of first and second moments. The key observation is that, if $n_1 + n_2 = n$ and we write $n_1 = \frac n2 - m$, $n_2 = \frac n2 + m$, then $|n_1|^2 + |n_2|^2 = \frac12 |n|^2 + 2 |m|^2$. Therefore the products of Gaussian Fourier coefficients appearing in $\widehat{B(u)}(n)$ have size $\re^{-\frac t2 |n|^2}$, and they are concentrated on pairs with $m = O(1)$, i.e.\ on pairs of frequencies which are almost parallel to $n$.  Indeed, $|n_1|^2 + |n_2|^2$ is minimised at $m = 0$, i.e.\ at $n_1 = n_2 = \frac n2$, so that the exponential weight $\re^{-2t(|n_1|^2 + |n_2|^2)} = \re^{-t|n|^2 - 4t|m|^2}$ concentrates the sums around $|m| = O(1)$.

 The Wick--Isserlis theorem reduces all expectations to sums over such pairs. The only delicate point is a lower bound for the resulting geometric factor, which in $d = 2$ exhibits an additional cancellation and is therefore treated separately at the end of the proof.
\begin{proof}[Proof of Lemma~\ref{lem:Gaussian expectations}]
	To prove \eqref{eqn:Erj}, compute directly that
	\begin{equation*}
		\EE_{u \sim \mu_t} \re^{\lambda t j} \norm{ P_j u }{ \dot H^s }^2
		= \re^{\lambda t j} \EE \Bigl[ \sum_{|n|^2 = j} |n|^{2s} \re^{-2t|n|^2} \frac{ |g_n|^2 }{ |n|^{2\alpha} } \Bigr]
		= (d - 1) |A_j| \re^{(\lambda - 2) t j} j^{s - \alpha},
	\end{equation*}
	where the expectation was calculated using
	\begin{equation*}
		\EE |g_n|^2
		= \EE \tr\bigl( (\Pi_n G_n)^\rt \Pi_n \bar{G_n} \bigr)
		= \tr \Pi_n = d - 1
	\end{equation*}
	after cycling the trace. The left-hand side of \eqref{eqn:Er'j}
	can be estimated easily, since by the Cauchy--Schwarz inequality, we have
	\begin{equation*}
		\begin{aligned}
			& \EE_{u \sim \mu_t} | \re^{\lambda t j} \innerprod{ P_j u }{ B(u) }{ \dot H^s } | \\
			& \le \bigl( \EE_{u \sim \mu_t} \re^{\lambda t j} \norm{ P_j u }{ \dot H^s }^2 \bigr)^{\frac12} \bigl( \EE_{u \sim \mu_t} \re^{\lambda t j} \norm{ P_j B(u) }{ \dot H^s }^2 \bigr)^{\frac12} \\
			& \lesssim |A_j| \re^{ (\lambda- \frac32) tj } j^{s - \frac{3\alpha}2},
		\end{aligned}
	\end{equation*}
	where we used \eqref{eqn:Erj} and the upper bound in \eqref{eqn:EZj} for the second inequality, which is proven next (in $d = 2$, the upper bound in \eqref{eqn:EZj} is even better). Since $|n|^2 = j$ on $A_j$, using incompressibility and symmetrising, we have
	\begin{equation} \label{eqn:EZj simplified1} \begin{aligned}
		& \EE_{u \sim \mu_t} \re^{\lambda t j} \norm{ P_j B(u) }{ \dot H^s }^2 \\
		& = \re^{\lambda t j} j^s \sum_{|n|^2 = j} \EE \Bigl| \sum_{n_1 + n_2 = n} \frac{ \re^{ - t ( |n_1|^2 + |n_2|^2 ) } }{ 2 |n_1|^\alpha |n_2|^\alpha }  \Pi_n \bigl[ (g_{n_1} \cdot n) g_{n_2} + (g_{n_2} \cdot n) g_{n_1} \bigr] \Bigr|^2.
	\end{aligned} \end{equation}
	To prove \eqref{eqn:EZj}, we expand the square above and handle all the various pairings in turn. We claim that by Theorem~\ref{thm:Wick-Isserlis} the expectation
	\begin{equation}
		\EE \bigl[ \Pi_n \bigl[ (g_{n_1} \cdot n) g_{n_2} + (g_{n_2} \cdot n) g_{n_1} \bigr] \cdot \bar{ \Pi_n \bigl[ (g_{n_3} \cdot n) g_{n_4} + (g_{n_4} \cdot n) g_{n_3} \bigr] } \bigr]
	\end{equation}
	vanishes unless $\{n_1,n_2\} = \{n_3,n_4\}$.  Indeed, by Theorem~\ref{thm:Wick-Isserlis} applied to the components of $g_{n_1}$, $g_{n_2}$, $\bar g_{n_3}$, $\bar g_{n_4}$, this expectation is a sum of products of covariances. Since $\EE[ g_a \otimes g_b ] = 0$ unless $a = -b$, $\EE[ g_a \otimes \bar g_b ] = 0$ unless $a = b$, and $n_1 = -n_2$ would force $n = 0$, every non-vanishing pairing matches $\{n_1,n_2\}$ with $\{n_3,n_4\}$. Moreover, the term with $n_1 = n_2 = \frac n2$ vanishes, since $g_{\frac n2} \cdot n = 0$. By symmetry, it follows that
	\begin{equation} \label{eqn:EZj simplified2} \begin{aligned}
		& \EE_{u \sim \mu_t} \re^{\lambda t j} \norm{ P_j B(u) }{ \dot H^s }^2 \\
		& = \tfrac12 \re^{\lambda t j} j^s \sum_{|n|^2 = j} \sum_{n_1 + n_2 = n} \frac{ \re^{ - 2 t ( |n_1|^2 + |n_2|^2 ) } }{ |n_1|^{2\alpha} |n_2|^{2\alpha} } \EE \bigl| \Pi_n \bigl[ (g_{n_1} \cdot n) g_{n_2} + (g_{n_2} \cdot n) g_{n_1} \bigr] \bigr|^2.
	\end{aligned} \end{equation}
	The remaining pairings give
	\begin{equation} \label{eqn:Zj pairings} \begin{aligned}
		& \EE \bigl| \Pi_n \bigl[ (g_{n_1} \cdot n) g_{n_2} + (g_{n_2} \cdot n) g_{n_1} \bigr] \bigr|^2 \\
		& = | \Pi_{n_1} n |^2 \tr( \Pi_n \Pi_{n_2} ) + | \Pi_{n_2} n |^2 \tr( \Pi_n \Pi_{n_1} ) + 2 ( \Pi_n \Pi_{n_1} n ) \cdot ( \Pi_n \Pi_{n_2} n ).
	\end{aligned} \end{equation}
	We now bound the summand in \eqref{eqn:EZj simplified2} from above and from below. We parametrise the sum over $n_1 + n_2 = n$ as $n_1 = \frac n2 - m$ and $n_2 = \frac n2 + m$, with $m \in \ZZ^d - \frac n2$, so that $|n_1|^2 + |n_2|^2 = \frac12 |n|^2 + 2|m|^2$. Since $\Pi_{n_1} n = 2 \Pi_{n_1} m$ and $\Pi_{n_2} n = - 2 \Pi_{n_2} m$, the right-hand side of \eqref{eqn:Zj pairings} is $\lesssim |m|^2$. If $|m| \le \frac14 |n|$, then $|n_1|,|n_2| \sim |n|$, so that the weight in \eqref{eqn:EZj simplified2} is $\lesssim \re^{-tj - 4t|m|^2} j^{-2\alpha}$. If $|m| > \frac14 |n|$, then $|n_1|,|n_2| \ge 1$, and the weight is at most $\re^{-tj - 4t|m|^2} \le \re^{-tj - 2t|m|^2} \re^{- \frac t8 |n|^2} \lesssim \re^{-tj - 2t|m|^2} j^{-2\alpha}$. In either case,
	\begin{equation} \label{eqn:Egn1gn2 bd} \begin{aligned}
		& \sum_{n_1 + n_2 = n} \frac{ \re^{ - 2 t ( |n_1|^2 + |n_2|^2 ) } }{ |n_1|^{2\alpha} |n_2|^{2\alpha} } \EE \bigl| \Pi_n \bigl[ (g_{n_1} \cdot n) g_{n_2} + (g_{n_2} \cdot n) g_{n_1} \bigr] \bigr|^2 \\
		& \lesssim \re^{-t j} j^{-2\alpha} \sum_{m \in \ZZ^d - \frac n2} |m|^2 \re^{-2t|m|^2}
		\lesssim \re^{-tj} j^{-2\alpha}.
	\end{aligned} \end{equation}
	Inserting this bound into \eqref{eqn:EZj simplified2} gives the upper bound in \eqref{eqn:EZj}.
	To prove \eqref{eqn:EZj}, it remains to bound \eqref{eqn:EZj simplified2} from below when $d \ge 3$.  Let $C \coloneq \sqrt d / \sin \frac\pi{12}$ and $c \coloneq \frac14$, and let $|n| \gg C$. We claim that, for every $\theta \in \SS^{d-1}$, there exists $m \in \ZZ^d - \frac n2$ such that
	\begin{equation} \label{eqn:good m}
		1 \le |m| \le C
		\qquad \text{and} \qquad
		\tfrac14 \le \sin^2 \angle(\theta,m) \le \tfrac34;
	\end{equation}
	in particular, $|m|^2 \sin^2 \angle(\theta,m) \ge c$ and $|m|^2 \cos^2 \angle(\theta,m) \ge c$. Indeed, let $x \in \SS^{d-1}$ be such that $\angle(\theta,x) = \frac\pi4$, and let $\rho \coloneq \frac12 C$. If $y \in \RR^d$ satisfies $|y - \rho x| \le \rho \sin \frac\pi{12}$, then $\angle(x,y) \le \frac\pi{12}$, so that $\frac\pi6 \le \angle(\theta,y) \le \frac\pi3$, and $\rho (1 - \sin \frac\pi{12}) \le |y| \le \rho (1 + \sin \frac\pi{12})$. Since $\rho \sin \frac\pi{12} = \frac{\sqrt d}2$, and every closed ball of radius $\frac{\sqrt d}2$ contains a point of the translate $\ZZ^d - \frac n2$ of $\ZZ^d$, we can choose such a $y = m \in \ZZ^d - \frac n2$. Then \eqref{eqn:good m} holds, since $\frac14 \le \sin^2 \le \frac34$ on $[\frac\pi6, \frac\pi3]$, and $1 \le \frac\rho2 \le |m| \le 2\rho = C$. Given $n$, let $m$ be as in \eqref{eqn:good m} with $\theta = n / |n|$, and let $n_1 = \frac n2 - m$, $n_2 = \frac n2 + m$. Using \eqref{eqn:Zj pairings}, we have
	\begin{equation} \begin{aligned}
		& \EE \bigl| \Pi_n \bigl[ (g_{n_1} \cdot n) g_{n_2} + (g_{n_2} \cdot n) g_{n_1} \bigr] \bigr|^2 \\
		& \ge | \Pi_{n_1} n |^2 (d - 2 + \cos^2 \angle(n,n_2) ) + | \Pi_{n_2} n |^2 (d - 2 + \cos^2 \angle(n,n_1) ) \\
		& \hskip 15pt - 2 | \Pi_{n_1} n | | \Pi_{n_2} n | \\
		& \ge (1 + \tfrac12) ( | \Pi_{n_1} n |^2 + | \Pi_{n_2} n |^2 ) - 2 | \Pi_{n_1} n | | \Pi_{n_2} n | \\
		& \ge \tfrac12 | \Pi_{n_1} n |^2
		= 2 |m|^2 \sin^2 \angle(n_1,m) \ge |m|^2 \sin^2 \angle(n,m)
		\ge c,
	\end{aligned} \end{equation}
	where the fourth line follows from the fact that $n$, $n_1$, and $n_2$ point in (nearly) the same direction when $|m| \le C$ and $|n| \gg C$ (so that $\cos^2 \angle(n,n_i) \ge \frac12$, and $d - 2 + \cos^2 \angle(n,n_i) \ge \frac32$ since $d \ge 3$), and the second-to-last inequality holds for $|n|$ large enough, depending on $C$ and $c$. Keeping only the term defined by $\pm m$ in \eqref{eqn:EZj simplified2} gives the lower bound required on $\EE_{u\sim\mu_t} \re^{\lambda t j} \norm{ P_j B(u) }{ \dot H^s }^2$, and proves \eqref{eqn:EZj} for $d \ge 3$ and $j \ge j_0$.  This argument uses $d - 2 \ge 1$, and does not apply when $d = 2$; this case is treated at the end of the proof. \\
	Finally, we prove \eqref{eqn:varZj} for $d \ge 3$. Analogously to \eqref{eqn:EZj simplified1}, we have
	\begin{equation} \label{eqn:varZj simplified} \begin{aligned}
		& \var_{u \sim \mu_t} \bigl( \re^{\lambda t j} \norm{ P_j B(u) }{ \dot H^s }^2 \bigr) \\
		& = \tfrac1{16} \re^{2 \lambda t j} j^{2s} \sum_{|n|^2 = j} \sum_{|n'|^2 = j} \sum_{ \substack{ n_1 + n_2 = n \\ n_3 + n_4 = n } } \sum_{ \substack{ n_1' + n_2' = n' \\ n_3' + n_4' = n' } } \\
		& \hskip30pt \frac{ \re^{ - t ( |n_1|^2 + \cdots + |n_4|^2 ) } }{ ( |n_1| \cdots |n_4| )^\alpha } \frac{ \re^{ - t ( |n_1'|^2 + \cdots + |n_4'|^2 ) } }{ ( |n_1'| \cdots |n_4'| )^\alpha } \\
		& \hskip30pt \cdot \cov \bigl( \Pi_n \bigl[ (g_{n_1} \cdot n) g_{n_2} + (g_{n_2} \cdot n) g_{n_1} \bigr] \cdot \bar{ \Pi_n \bigl[ (g_{n_3} \cdot n) g_{n_4} + (g_{n_4} \cdot n) g_{n_3} \bigr] } , \\
		& \hskip40pt \bar{ \Pi_{n'} \bigl[ (g_{n_1'} \cdot n') g_{n_2'} + (g_{n_2'} \cdot n') g_{n_1'} \bigr] } \cdot \Pi_{n'} \bigl[ (g_{n_3'} \cdot n') g_{n_4'} + (g_{n_4'} \cdot n') g_{n_3'} \bigr] \bigr),
	\end{aligned} \end{equation}
	where $\cov(X,Y) = \EE[X\bar Y] - \EE X \bar{ \EE Y }$. By hypercontractivity (see e.g.\ \cite{Bog98}),
	\begin{equation}
		\EE \bigl| \Pi_n \bigl[ (g_{n_1} \cdot n) g_{n_2} + (g_{n_2} \cdot n) g_{n_1} \bigr] \bigr|^4
		\lesssim \bigl( \EE \bigl| \Pi_n \bigl[ (g_{n_1} \cdot n) g_{n_2} + (g_{n_2} \cdot n) g_{n_1} \bigr] \bigr|^2 \bigr)^2
	\end{equation}
	(and similarly for the other indices) and so, by two applications of H\"older's inequality, each covariance in \eqref{eqn:varZj simplified} is
	\begin{equation} \begin{aligned}
		& \lesssim \bigl( \EE \bigl| \Pi_n \bigl[ (g_{n_1} \cdot n) g_{n_2} + (g_{n_2} \cdot n) g_{n_1} \bigr] \bigr|^2 \EE \bigl| \Pi_n \bigl[ (g_{n_3} \cdot n) g_{n_4} + (g_{n_4} \cdot n) g_{n_3} \bigr] \bigr|^2 \bigr)^{\frac12} \\
		& \hskip15pt \bigl( \EE \bigl| \Pi_{n'} \bigl[ (g_{n_1'} \cdot n') g_{n_2'} + (g_{n_2'} \cdot n') g_{n_1'} \bigr] \bigr|^2 \\
		& \hskip30pt \EE \bigl| \Pi_{n'} \bigl[ (g_{n_3'} \cdot n') g_{n_4'} + (g_{n_4'} \cdot n') g_{n_3'} \bigr] \bigr|^2 \bigr)^{\frac12}.
	\end{aligned} \end{equation}
	Moreover, the sums in \eqref{eqn:varZj simplified} keep only pairings with some $n_i = \pm n'_{i'}$ since otherwise the two factors are functions of independent Gaussians. In particular, writing $n_1,n_2 = \frac n2 \mp m_1$ etc., we have $\rho(n,n') \coloneq \min\{ |n - n'| , |n + n'| \} \le 2 ( |m_1| + \cdots + |m_4| )$, and then $|m_1|^2 + \cdots + |m_4|^2 \ge \frac1{16} \rho(n,n')^2$. By \eqref{eqn:Egn1gn2 bd} and its proof,
	\begin{equation} \label{eqn:varZj bd} \begin{aligned}
		& \var_{u \sim \mu_t} \bigl( \re^{\lambda t j} \norm{ P_j B(u) }{ \dot H^s }^2 \bigr) \\
		& \lesssim \re^{ 2 (\lambda - 1) t j } j^{2s - 4\alpha} \sum_{|n|^2,|n'|^2 = j} \sum_{m_1,\ldots,m_4} |m_1| \cdots |m_4| \, \re^{ - 2t ( |m_1|^2 + \cdots + |m_4|^2 ) } \\
		& \hskip 200pt \times \1_{ \{ \rho(n,n') \le 2 ( |m_1| + \cdots + |m_4| ) \} } \\
		& \lesssim \re^{ 2 (\lambda - 1) t j } j^{2s - 4\alpha} \sum_{|n|^2,|n'|^2 = j} \re^{ - \frac t{16} \rho(n,n')^2 }.
	\end{aligned} \end{equation}
	Since $\sum_{n'} \re^{ - \frac t{16} \rho(n,n')^2 } \le 2 \sum_{v \in \ZZ^d} \re^{ - \frac t{16} |v|^2 } \lesssim 1$, we arrive precisely at \eqref{eqn:varZj} for $d \ge 3$.
	\emph{The case $d = 2$.} The identities \eqref{eqn:EZj simplified1}, \eqref{eqn:EZj simplified2} and \eqref{eqn:varZj simplified} hold in every dimension $d \ge 2$, and the argument above gives the upper bounds in \eqref{eqn:EZj} and \eqref{eqn:varZj} with $\beta_d = 0$. In $d = 2$, however, the argument for the lower bound fails, since $d - 2 + \cos^2 \angle(n,n_2) < 1$, and there is an additional cancellation, which improves both bounds by a power of $j$.
	In $d = 2$, we can write $g_{n_i} = \gamma_{n_i} n_i^\perp / |n_i|$, where $\gamma_{n_i} \coloneq g_{n_i} \cdot n_i^\perp / |n_i|$ is a standard complex Gaussian, and $\gamma_{n_1}$, $\gamma_{n_2}$ are independent when $n_1 \ne \pm n_2$. With $n_1 = \frac n2 - m$ and $n_2 = \frac n2 + m$, we have $n_1^\perp \cdot n = n^\perp \cdot m = - n_2^\perp \cdot n$ (recall that $a^\perp \cdot b = - a \cdot b^\perp$), $n_2^\perp - n_1^\perp = 2 m^\perp$, and $\Pi_n m^\perp = \frac{n \cdot m}{|n|^2} n^\perp$. Hence
	\begin{equation} \label{eqn:Zj pairings 2d}
		\Pi_n \bigl[ (g_{n_1} \cdot n) g_{n_2} + (g_{n_2} \cdot n) g_{n_1} \bigr]
		= \frac{ 2 \gamma_{n_1} \gamma_{n_2} }{ |n_1| |n_2| } \frac{ (n^\perp \cdot m) (n \cdot m) }{ |n|^2 } n^\perp,
	\end{equation}
	and, since $\EE |\gamma_{n_1} \gamma_{n_2}|^2 = 1$ for $m \ne 0$,
	\begin{equation} \label{eqn:Zj pairings 2d moment}
		\begin{aligned}
			\EE \bigl| \Pi_n \bigl[ (g_{n_1} \cdot n) g_{n_2} + (g_{n_2} \cdot n) g_{n_1} \bigr] \bigr|^2
			& = \frac{ 4 (n^\perp \cdot m)^2 (n \cdot m)^2 }{ |n|^2 |n_1|^2 |n_2|^2 } \\
			& = \frac{ 4 |n|^2 |m|^4 \sin^2 \angle(n,m) \cos^2 \angle(n,m) }{ |n_1|^2 |n_2|^2 }.
		\end{aligned}
	\end{equation}
	If $|m| \le \frac14 |n|$, then $|n_1|, |n_2| \ge \frac14 |n|$, and \eqref{eqn:Zj pairings 2d moment} is $\lesssim |m|^4 |n|^{-2} \lesssim |m|^4 j^{-1}$. If $|m| > \frac14 |n|$, then \eqref{eqn:Zj pairings 2d moment} is at most $4 |n|^2 |m|^4$, while the weight in \eqref{eqn:EZj simplified2} is at most $\re^{-tj - 4t|m|^2} \le \re^{-tj - 2t|m|^2} \re^{-\frac t8 |n|^2}$, and the last factor absorbs any power of $|n|$. Arguing as in \eqref{eqn:Egn1gn2 bd}, we obtain
	\begin{equation} \label{eqn:Egn1gn2 bd 2d}
		\begin{aligned}
			& \sum_{n_1 + n_2 = n} \frac{ \re^{ - 2 t ( |n_1|^2 + |n_2|^2 ) } }{ |n_1|^{2\alpha} |n_2|^{2\alpha} } \EE \bigl| \Pi_n \bigl[ (g_{n_1} \cdot n) g_{n_2} + (g_{n_2} \cdot n) g_{n_1} \bigr] \bigr|^2 \\
			& \lesssim \re^{ - t j } j^{ - 2 \alpha - 1 } \sum_{m \in \ZZ^2 - \frac n2} (1 + |m|^4) \re^{ - 2t |m|^2}
			\lesssim \re^{ - t j } j^{ - 2 \alpha - 1 },
		\end{aligned}
	\end{equation}
	and inserting this into \eqref{eqn:EZj simplified2} gives the upper bound in \eqref{eqn:EZj} with $\beta_2 = 1$. For the variance, we argue exactly as in the case $d \ge 3$, the only difference being that, when $|m_i| \le \frac14 |n|$, each of the four second moments appearing in the bound for the covariances is now $\lesssim |m_i|^4 j^{-1}$ instead of $\lesssim |m_i|^2$. Therefore each covariance carries an additional factor $j^{-2}$, so that \eqref{eqn:varZj bd} holds with $j^{2s - 4\alpha}$ replaced by $j^{2s - 4\alpha - 2}$ (and $|m_1| \cdots |m_4|$ replaced by $|m_1|^2 \cdots |m_4|^2$, which does not affect the summation), while the terms with some $|m_i| > \frac14|n|$ are again controlled by the Gaussian weights. This gives \eqref{eqn:varZj} with $\beta_2 = 1$.

	 We now prove the lower bound. As in the case $d \ge 3$, we keep only the terms with $|m| \le C$, but now the second moment \eqref{eqn:Zj pairings 2d moment} carries an additional factor $j^{-1}$. By \eqref{eqn:good m} with $\theta = n / |n|$, there exists $m \in \ZZ^2 - \frac n2$ with $1 \le |m| \le C$ and $|m|^4 \sin^2 \angle(n,m) \cos^2 \angle(n,m) \ge \frac1{16}$. Let now $|n| \ge 4C$, so that $|m| \le \frac14 |n|$ and $\frac14 |n| \le |n_1|, |n_2| \le \frac34 |n|$. Then \eqref{eqn:Zj pairings 2d moment} gives
	\begin{equation*}
		\EE \bigl| \Pi_n \bigl[ (g_{n_1} \cdot n) g_{n_2} + (g_{n_2} \cdot n) g_{n_1} \bigr] \bigr|^2
		\ge \frac{ |n|^2 }{ 4 ( \frac34 |n| )^4 }
		\gtrsim j^{-1},
	\end{equation*}
	while the corresponding weight in \eqref{eqn:EZj simplified2} is $\re^{-tj - 4t|m|^2} |n_1|^{-2\alpha} |n_2|^{-2\alpha} \gtrsim \re^{-tj} j^{-2\alpha}$. Keeping only the terms defined by $\pm m$ in \eqref{eqn:EZj simplified2}, we obtain $\EE_{u \sim \mu_t} \re^{\lambda t j} \norm{ P_j B(u) }{ \dot H^s }^2 \gtrsim |A_j| \re^{(\lambda - 1) t j} j^{s - 2\alpha - 1}$ for $j \ge 16 C^2$, which is the lower bound in \eqref{eqn:EZj} with $\beta_2 = 1$. Finally, since the ratio between \eqref{eqn:varZj} and the square of \eqref{eqn:EZj} is $\lesssim |A_j|^{-1}$ in every dimension, the proof of Proposition~\ref{prop:Gaussian} applies verbatim in $d = 2$.

\end{proof}

\section{Local theory} \label{sec:local}
In this section we collect the properties of the flow of \eqref{eqn:NSE} that are needed in the proof of Theorem~\ref{thm:main precise}. Since \eqref{eqn:NSE} is autonomous, it is convenient to consider solutions on general time intervals: for $0 \le T_0 < T$, we say that $u \colon [T_0,T] \times \TT^d \to \RR^d$ is a solution to \eqref{eqn:NSE} on $[T_0,T]$ if $u(T_0 + \cdot)$ is a solution to \eqref{eqn:NSE} on $[0, T - T_0]$ in the sense of Definition~\ref{defn:solution}.

The relevant properties of the flow are summarised in the following proposition.
\begin{prop}[Local theory] \label{prop:local theory}
	Let $\sigma$ be as in Definition~\ref{defn:solution}, let $\frac d2 < s \le \sigma - 6$ and $J \ge 1$, and let $u$ be a solution to \eqref{eqn:NSE} on $[T_0,T]$, for some $0 \le T_0 < T$, with $u(T_0) = u_0$. Then the following hold.
	\begin{enumerate}[label=(\roman*)]
		\item \emph{(Semi-linear time of existence in $X_J^s(\lambda,t_0)$)} Let $1 < \lambda < 2$ and $t_0 > 0$. For every $\tau > 0$ there exists a constant $c > 0$, depending only on $d$, $s$, $t_0$ and $\tau$, and uniform for $t_0$ in compact subsets of $(0,\infty)$, such that, setting
		\begin{equation} \label{eqn:T ast}
			T_\ast(M, J) = c (1 + M )^{-2} \re^{ - (\frac\lambda4 + \tau) t_0 J }
		\end{equation}
		for $M \ge 0$, $J \ge 1$,
		for
		\[T_0 \le t \le T_0 + \min\{T_\ast(\norm{ u_0 }{ X_J^s(\lambda,t_0) }, J), T - T_0\}\]
		we have that
		\begin{align*}
			\norm{ u(t) }{ X_J^s(\lambda,t_0) } &\le 2 \norm{ u_0 }{ X_J^s(\lambda,t_0) }, \\
			\norm{ u(t) - S(t - T_0) u_0 }{ X_J^s(\lambda,t_0) }
			&\lesssim_\tau \norm{ u_0 }{ X_J^s(\lambda,t_0) }^2.
		\end{align*}
		\item \emph{(Second derivative bound)}
		There exist $1 < \lambda_0 < 2$ and $\delta_0 > 0$ such that the following holds for every $\lambda_0 < \lambda < 2$ and every $0 < \delta < \delta_0$. Let $K \ge 1$, $j \le J$ and $t_1 \in [T_0, T)$ with $t_1 > 0$, and suppose that $\norm{ u }{ C([T_0,T];\dot H^\sigma)} \le K$. Then there exist $0 < \eta = \eta(\lambda,\delta) \ll 1$, $A = A(\lambda,\delta) > 0$ and a constant $C = C(d,s,\lambda,\delta,t_1,T) > 0$, independent of $j$, $J$ and $K$, such that, setting $\lambda_J \coloneq (\lambda - 1 - \eta)J$,
		\begin{equation} \label{eqn:convexity}
			\begin{aligned}
				\frac{\rd^2}{\rd t^2} \bigl( \re^{\lambda t j} \norm{ P_j u(t) }{ \dot H^s }^2 \bigr)
				& \ge \re^{\lambda t j} \norm{ P_j B(u(t)) }{ \dot H^s }^2 - j^2 \re^{\lambda t j} \norm{ P_j u(t) }{ \dot H^s }^2 \\
				& \hskip 15pt - \re^{ - \delta tJ} \norm{ B(u(t)) }{ X_J^s(\lambda,t) }^2 - \norm{ u(t) }{ X_J^s(\lambda,t) }^{A} - C K^4 \re^{\lambda_J t}
			\end{aligned}
		\end{equation}
		for all $t \in [t_1,T]$.
	\end{enumerate}
\end{prop}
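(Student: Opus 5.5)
For part (i), I will run a standard energy/bootstrap argument in the fixed-weight space $X_J^s(\lambda,t_0)$. Write $u(t) = S(t-T_0)u_0 + w(t)$. Here $w$ solves the forced heat equation \eqref{eqn:forced heat} with zero data and forcing $F = -\Pi(u\otimes u)$. This uses $B(u) = -\Pi\divergence(u\otimes u)$, and $\Pi$ commutes with $\divergence$ and is a Fourier multiplier of norm at most $1$.

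By Lemma~\ref{lem:Pj}, the heat flow is a contraction in $X_J^s(\lambda,t_0)$. So it suffices to bound $\norm{F}{L^2_\tau X_J^s(\lambda,t_0)}$ on an interval of length $\tau_\ast$. To do this I split $u\otimes u = u\otimes P_{\le J}u + u\otimes P_{>J}u$:
\begin{itemize}
\item The high-frequency piece is handled by \eqref{eqn:X prod est high freq}, with no loss.
\item For the low-frequency piece, I do not put the divergence on $F$. Instead I keep $B(u,P_{\le J}u) = -\Pi((u\cdot\nabla)P_{\le J}u)$ as a non-divergence forcing and use \eqref{eqn:X prod est low freq}, which loses $\re^{(\frac\lambda4+\tau)t_0J}$.
\end{itemize}
Combining a Duhamel bound $\norm{\int_{T_0}^t S(t-t')G\,\rd t'}{X}\le \int\norm{G}{X}$ with Lemma~\ref{lem:forced heat a priori bd} for the high piece gives, for $M(t) = \sup_{[T_0,t]}\norm{u}{X}$,
\[
M(t) \le \norm{u_0}{X} + C\bigl((t-T_0)\re^{(\frac\lambda4+\tau)t_0J} + (t-T_0)^{1/2}\bigr) M(t)^2 .
\]
Continuity of $t\mapsto \norm{u(t)}{X}$ (finite, since $u\in C\dot H^\sigma$ and $\norm{\cdot}{X}\lesssim_J\norm{\cdot}{\dot H^s}$) and a continuity argument then give $M\le 2\norm{u_0}{X}$ for $t-T_0\le T_\ast$. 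The $(1+M)^{-2}$ in \eqref{eqn:T ast} absorbs the square-root term: we have $(t-T_0)^{1/2}M\le c^{1/2}$ there. Feeding $M\le 2\norm{u_0}{X}$ back into the same bound yields the difference estimate $\lesssim_\tau \norm{u_0}{X}^2$.

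For part (ii), I will compute directly. Let $r_j(t) = \re^{\lambda tj}\norm{P_ju}{\dot H^s}^2$. Since $P_j\Delta = -jP_j$,
\[
r_j' = \re^{\lambda tj}\bigl(\lambda j\norm{P_ju}{\dot H^s}^2 - 2\norm{P_ju}{\dot H^{s+1}}^2 + 2\innerprod{P_ju}{B(u)}{\dot H^s}\bigr) = \re^{\lambda tj}\bigl((\lambda-2)j\norm{P_ju}{\dot H^s}^2 + 2\innerprod{P_ju}{B(u)}{\dot H^s}\bigr).
\]
Differentiating once more and using $\partial_tP_ju = -jP_ju + P_jB(u)$ gives
\[
r_j'' = \re^{\lambda tj}\Bigl[(\lambda-2)^2j^2\norm{P_ju}{}^2 + 2(2\lambda-2)j\innerprod{P_ju}{B}{} + 2\norm{P_jB}{}^2 + 2\innerprod{P_ju}{\tfrac{\rd}{\rd t}B(u)}{}\Bigr],
\]
with all norms and inner products in $\dot H^s$. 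Young's inequality on the cross term $j\innerprod{P_ju}{B}{}$ costs a fraction of $\norm{P_jB}{}^2$ plus $O(j^2)\norm{P_ju}{}^2$. This accounts for the first two terms of \eqref{eqn:convexity}, after adjusting constants (the stated coefficient $j^2$ requires $\lambda$ close to $2$ and careful bookkeeping; otherwise one tolerates an absolute constant).

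The hard term is $\innerprod{P_ju}{2B(u,\partial_tu)}{}$ with $\partial_tu = \Delta u + B(u)$. I would split $\partial_t u$ into its frequency pieces and use Lemma~\ref{lem:Lambda}:
\begin{itemize}
\item The pairing of $P_ju$ with $B(u,\Delta u)$ is bounded, via the Fourier support constraint $(k,\ell)\in\Lambda(j)$, by the weighted norms. Because $\re^{\lambda tj}\norm{P_ju}{}\le\re^{\frac\lambda2 tj}\norm{u}{X}$ and the inputs decay like $\re^{-\frac\lambda2t(k+\ell)}$ with $k+\ell\ge \frac j2-2$, this gives a power of $\norm{u}{X_J^s(\lambda,t)}$. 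The polynomial factors $j,J$ are absorbed by the spare exponential, and this becomes the $\norm{u}{X}^A$ term.
\item The pairing of $P_ju$ with $B(u,B(u))$ is bounded using Cauchy--Schwarz as $\re^{\frac\lambda2tj}\norm{u}{X}\cdot\re^{\frac\lambda2tj}\norm{P_jB(u,B(u))}{}$. Young's inequality with weight $\re^{\mp\delta tJ}$ then produces $\re^{-\delta tJ}\norm{B(u)}{X}^2$ and a power of $\norm{u}{X}$.
\item Contributions with frequencies above $J$ are controlled by $\norm{u}{\dot H^\sigma}\le K$. Here I use the $6$ spare derivatives of $\sigma-s$, together with a Gaussian-free bound $\re^{\lambda tj}$ times the $P_{>J}$ decay, to obtain $CK^4\re^{\lambda_Jt}$.
\end{itemize}

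I expect the main obstacle to be this last bookkeeping. Specifically, the exponent $\lambda_J=(\lambda-1-\eta)J$ must actually appear, which requires choosing $\lambda_0,\delta_0$ so that the losses $\frac\lambda4$ from Lemma~\ref{lem:X prod est} and $\delta$ fit below the available exponential gains.
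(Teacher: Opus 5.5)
Your part (i) is the paper's proof of Proposition~\ref{prop:continuity}, with the same Duhamel split, the same use of \eqref{eqn:X prod est low freq}, Lemma~\ref{lem:forced heat a priori bd} and \eqref{eqn:X prod est high freq}, and the same continuity argument. It is fine.

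Part (ii) follows the paper's skeleton, but your estimate for the terms of $\frac{\rd}{\rd t}B(u)$ that contain $B(u)$ does not close. No choice of $\lambda_0,\delta_0$ rescues it. Write $x = \norm{u}{X_J^s(\lambda,t)}$ and $y = \norm{B(u)}{X_J^s(\lambda,t)}$. After your Cauchy--Schwarz step, the weighted norms give at best
\[
\re^{\lambda tj}\bigl|\innerprod{P_ju}{B(u,B(u))}{\dot H^s}\bigr|
\lesssim \sum_{(k,\ell)\in\Lambda(j)} \sqrt\ell\, \re^{\frac\lambda2 t(j-k-\ell)}x^2y
\lesssim \re^{(\frac\lambda4+\kappa)tJ}x^2y ,
\]
where $\kappa>0$ absorbs polynomial factors. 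This cannot be improved with first powers of $x,y$, because the configuration $k=\ell\approx\frac j4$ (that is, $n_1=n_2=\frac n2$) lies in $\Lambda(j)$.

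Young's inequality against $\re^{-\delta tJ}y^2$ then leaves $\re^{(\frac\lambda2+\delta+2\kappa)tJ}x^4$. This is not a power of $x$, and it is not dominated by $x^A+CK^4\re^{\lambda_Jt}$, since $\frac\lambda2>\lambda-1-\eta$ for every $\lambda<2$. In fact no splitting works. Test $x=1$, $y=\re^{btJ}$ with $2b-\delta=\lambda-1-\eta$: a bound $\re^{\frac\lambda4 tJ}x^2y\lesssim \re^{-\delta tJ}y^2+x^A+\re^{\lambda_Jt}$ would force $\frac\lambda2\le\lambda-1-\eta-\delta$, which is false. The same obstruction hits $B(B(u),u)$ and the $P_{>J}B(u)$ pieces.

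The missing idea is to use $y$ only to the power $\frac12$. By the algebra property $\norm{B(u)}{\dot H^s}\lesssim K^2$,
\[
\norm{P_kB(u)}{\dot H^s}\le\norm{B(u)}{\dot H^s}^{\frac12}\norm{P_kB(u)}{\dot H^s}^{\frac12}\lesssim K\re^{-\frac\lambda4 tk}y^{\frac12},
\]
and similarly $\norm{P_{>J}B(u)}{\dot H^{s+1}}\lesssim K\re^{-\frac\lambda4 tJ}y^{\frac12}$, using $\norm{B(u)}{\dot H^{s+2}}\lesssim K^2$.

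- The pointwise exponent gets worse, but the asymmetric inequalities $\frac k2+\ell,\ k+\frac\ell2\ge\frac j3-\frac32$ in \eqref{eqn:kljinequality} cap it at $\frac\lambda3 tJ$. This gives $K\re^{(\frac\lambda3+\kappa)tJ}x^2y^{\frac12}$.
- Young with exponents $4$ and $\frac43$ now only needs $\frac\lambda3+\kappa+\frac\delta4<\frac34(\lambda-1-\eta)$. This holds near $\lambda=2$ since $\frac23<\frac34$, and it is exactly \eqref{eqn:r'' bd param}, the condition that fixes $\lambda_0$ and $\delta_0$.

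There are three smaller corrections.

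\emph{The $\Delta u$ terms.} The $B(u,\Delta u)$ and $B(\Delta u,u)$ terms are not simply a power of $x$. The same configuration gives a net loss $\re^{(\frac\lambda4+\kappa)tJ}x^3$; there is no spare exponential. It is Young's inequality, with $\frac\lambda4<\lambda-1-\eta$, that turns this into $\eps x^A+CK^4\re^{\lambda_Jt}$. So the $CK^4\re^{\lambda_Jt}$ term is needed at low frequencies too, not only for the $P_{>J}$ pieces.

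\emph{The high-frequency pieces.} For the $P_{>J}$ pieces, $K$ alone is insufficient: for $j$ near $J$, the factor $\re^{\frac\lambda2 tj}$ from $P_ju$ already exceeds $\re^{\lambda_Jt}$. You need $\norm{P_{>J}u}{\dot H^{s+s'}}\lesssim K^{\frac12}\re^{-\frac\lambda4 tJ}x^{\frac12}$, obtained by interpolating the $X$-decay against $\norm{u}{\dot H^\sigma}\le K$, followed by Young.

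\emph{The cross-term coefficient.} In $r_j''$ the coefficient of $j\,\re^{\lambda tj}\innerprod{P_ju}{B(u)}{\dot H^s}$ is $4\lambda-6$, not $2(2\lambda-2)$. Also, $\partial_tB(u)=B(\partial_tu,u)+B(u,\partial_tu)$, not $2B(u,\partial_t u)$, since $B$ is not symmetric. With the correct coefficient, $(2-\lambda)^2-(2\lambda-3)^2\ge-1$ yields exactly the term $-j^2\re^{\lambda tj}\norm{P_ju}{\dot H^s}^2$ of the statement. Your coefficient would give roughly $-4j^2$ near $\lambda=2$.
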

The rest of this section is devoted to the proof of Proposition~\ref{prop:local theory}. Item (i) follows from Proposition~\ref{prop:continuity}, after replacing $0$ with $T_0$, and item (ii) is Proposition~\ref{lem:r'' bd}.

Let us comment on the two items. Item (i) is a standard semi-linear local well-posedness statement in the weighted spaces $X_J^s(\lambda,t_0)$. These spaces measure the decay $\norm{ P_j f }{ \dot H^s } \lesssim \re^{-\frac\lambda2 t_0 j}$, $j \le J$, which is satisfied by typical samples of $\mu_{t_0}$ since $\lambda < 2$ (see Section~\ref{sec:gaussian}). The crucial point is the length $\re^{-(\frac\lambda4 + \tau) t_0 J}$ of the time interval. Since $\frac\lambda4 < \frac12$, it is much longer than the time scale $\re^{-\frac12 t_0 J}$ on which the nonlinearity produces a detectable change in the frequencies $|n|^2 = J$, if one were to assume $u(t) \in \cG_t$ uniformly. Item (ii) is the quantitative convexity estimate that allows us to extend this argument outside of the times $t$ such that $u(t) \in \cG_t$. More precisely, its right-hand side contains the nonnegative term $\re^{\lambda t j} \norm{ P_j B(u(t)) }{ \dot H^s }^2$, which is large when $u(t)$ looks like a typical sample of $\mu_t$, and is bounded below by $0$ otherwise. The error terms will be controlled in Section~\ref{sec:sing} by means of item (i) and of Lemma~\ref{lem:Z>E}.

We now prove Proposition~\ref{prop:local theory}\,(i). The idea is to keep track of the first time at which \,(i) fails, and use the estimates leading to a standard Banach fixed point argument to carefully keep track of the existence time. Note that we do not actually need to perform the fixed point to show existence of the solution, in view of the assumption that $u$ is a solution on $[0,T]$ in Proposition~\ref{prop:continuity} (recall from \eqref{eqn:WP defn} that our results are conditional on the existence of mild solutions).
\begin{prop} \label{prop:continuity}
	Let $u : [0,T] \times \TT^d \to \RR^d$ be a solution to \eqref{eqn:NSE} with initial datum $u_0$. Fix some $\lambda,t_0,s,J$ defining $\norm{ \,\cdot\, }{X_J^s(\lambda,t_0)}$, with $t_0 > 0$. Then, for every $\tau > 0$, there exists a constant $c > 0$, depending only on $d$, $s$, $t_0$ and $\tau$, and uniform for $t_0$ in compact subsets of $(0,\infty)$, such that, with $T_\ast(M, J) = c (1 + M )^{-2} \re^{ - (\frac\lambda4 + \tau) t_0 J }$ for $M \ge 0$ and $J \ge 1$, it holds that
	\begin{align*}
		\norm{ u(t) }{ X_J^s(\lambda,t_0) } &\le 2 \norm{ u_0 }{ X_J^s(\lambda,t_0) }, \\
		\norm{ u(t) - S(t) u_0 }{ X_J^s(\lambda,t_0) }
		&\lesssim_\tau \norm{ u_0 }{ X_J^s(\lambda,t_0) }^2
	\end{align*}
	for $0 \le t \le \min\{T_\ast(\norm{ u_0 }{ X_J^s(\lambda,t_0) }, J), T\}$.
\end{prop}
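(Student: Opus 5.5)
We work directly with the mild formulation \eqref{eqn:LerayNSE int'd} and close a bootstrap on the first time at which the claimed bound fails. Write $M \coloneq \norm{ u_0 }{X_J^s(\lambda,t_0)}$, $X \coloneq X_J^s(\lambda,t_0)$, and split the Duhamel term as $w(t) \coloneq u(t) - S(t) u_0$.

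\textbf{Step 1: the linear part.} Since $S(t)$ is a Fourier multiplier commuting with every $P_j$ and $P_{>J}$, and contracts each piece, we have $\norm{ S(t) u_0 }{X} \le M$ for all $t \ge 0$.

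\textbf{Step 2: the nonlinear part.} The Duhamel term solves the forced heat equation \eqref{eqn:forced heat} with zero data and forcing $B(u) = -\Pi\,\nabla\cdot(u\otimes u)$. This follows since $u$ is smooth for positive times; near $t=0$ we use regularity in $\dot H^\sigma$ with $\sigma \ge s + 6$, so everything is justified, possibly after approximating by starting at $\eps>0$.

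\emph{High-frequency piece.} We write $u \otimes u = u\otimes P_{\le J}u + u\otimes P_{>J}u$ and treat the $P_{>J}$ part through $\nabla\cdot F$ with $F = -\Pi(u\otimes P_{>J}u)$. By Lemma~\ref{lem:forced heat a priori bd} and \eqref{eqn:X prod est high freq}, its contribution is bounded by $\norm{ u \otimes P_{>J}u }{L^2_t X} \lesssim t^{1/2}\sup_{t'\le t}\norm{ u(t') }{X}^2$. Since $\Pi$ is a multiplier of norm $\le 1$ commuting with the $P_j$, it is harmless.

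\emph{Low-frequency piece.} The remaining part $(u\cdot\nabla)P_{\le J}u$ is not in divergence form in a useful way, so we bound it directly with Duhamel. We use $\norm{ S(t-t')f }{X} \le \norm{ f }{X}$ and \eqref{eqn:X prod est low freq} with $\eta = \tau$:
\[
\Bigl\| \int_0^t S(t-t')\,\Pi\bigl((u\cdot\nabla)P_{\le J}u\bigr)\,\rd t' \Bigr\|_X \lesssim_\tau t\, \re^{(\frac\lambda4+\tau)t_0J}\sup_{t'\le t}\norm{ u(t') }{X}^2 .
\]
Combining the two pieces gives
\[
\norm{ w(t) }{X} \le C_\tau \bigl(t^{1/2} + t\,\re^{(\frac\lambda4+\tau)t_0J}\bigr)\sup_{t'\le t}\norm{ u(t') }{X}^2 .
\]

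\textbf{Step 3: bootstrap.} The map $t\mapsto\norm{ u(t) }{X}$ is continuous: the norm is equivalent to $\dot H^s$ with $J$-dependent constants, and $u\in C\dot H^\sigma$. Let $t^\ast$ be the first time at which $\norm{ u(t) }{X} = 2M$; if $M=0$ there is nothing to prove. On $[0,t^\ast]$ Step 2 gives
\[
\norm{ u(t) }{X} \le M + 4C_\tau M^2\bigl(t^{1/2}+t\,\re^{(\frac\lambda4+\tau)t_0J}\bigr).
\]
For $t\le T_\ast = c(1+M)^{-2}\re^{-(\frac\lambda4+\tau)t_0J}$ we have $t\le T_\ast\le c$, $t^{1/2}\le c^{1/2}(1+M)^{-1}$ and $t\,\re^{(\frac\lambda4+\tau)t_0J}\le c(1+M)^{-2}$. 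The correction is therefore at most $4C_\tau(c^{1/2}+c)M^2(1+M)^{-1} < M$ for $c$ small. This contradicts $\norm{ u(t^\ast) }{X}=2M$ if $t^\ast\le T_\ast$, which proves the first bound. The second bound then follows from Step 2: $\norm{ w(t) }{X}\lesssim_\tau M^2$. Uniformity of $c$ for $t_0$ in compact sets is inherited from Lemma~\ref{lem:X prod est}.

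\textbf{Main obstacle.} The delicate point is the $t^{1/2}$ versus $M$ bookkeeping in the high-frequency term. Using $(1+M)^{-2}$ in $T_\ast$ handles this, since $t^{1/2}M^2\lesssim c^{1/2}M$. The other point needing care is justifying the energy estimate of Lemma~\ref{lem:forced heat a priori bd} for the mild solution, including the $\Pi$ in front of the divergence; I expect regularity $\sigma\ge s+6$ to suffice via approximation.
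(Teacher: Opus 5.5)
Your proposal is correct and follows the paper's proof: you use the same Duhamel split into $B(u,P_{\le J}u)$ and $B(u,P_{>J}u)$. The first piece is bounded directly through the low-frequency bilinear estimate at cost $t\,\re^{(\frac\lambda4+\tau)t_0J}$, the second through the forced heat energy estimate and the high-frequency bilinear estimate at cost $t^{1/2}$, and both are closed by the same first-exit-time bootstrap. Your choice $t^{1/2}\le c^{1/2}(1+M)^{-1}$ is just a more direct version of the paper's lower bound on $T_\#$, and you also state the continuity of $t\mapsto\norm{u(t)}{X_J^s(\lambda,t_0)}$, which the paper uses without comment.
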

\begin{proof}
	Throughout the proof, we write $T_\ast$ for $T_\ast(\norm{ u_0 }{ X_J^s(\lambda,t_0) }, J)$. By the Duhamel formula, we have
	\begin{equation} \label{eqn:Duhamel}
		\begin{aligned}
			u(t) - S(t) u_0
			& = \int_0^t S(t - t') B( u(t') , P_{\le J} u(t') ) \, \rd t' \\
			& \hskip 15pt + \int_0^t S(t - t') B( u(t') , P_{> J} u(t') ) \, \rd t' \\
			& \eqcolon \mathrm{I} + \mathrm{II}.
		\end{aligned}
	\end{equation}
	Using the bilinear estimate \eqref{eqn:X prod est low freq} (Lemma~\ref{lem:X prod est}), for every $\tau > 0$ we have
	\begin{equation} \label{eqn:I bd}
		\norm{ \mathrm{I} }{ C_T X_J^s(\lambda,t_0) }
		\lesssim T \norm{ (u \cdot \nabla) P_{\le J} u }{ C_T X_J^s(\lambda,t_0) }
		\lesssim_\tau \re^{ (\frac\lambda4 + \tau) t_0 J } T \norm{ u }{ C_T X_J^s(\lambda,t_0) }^2.
	\end{equation}
	Meanwhile, the integral $\mathrm{II}$ solves the equation
	\begin{equation*}
		\begin{cases}
			& ( \partial_t - \Delta ) \mathrm{II} = - \Pi \nabla \cdot ( u \otimes P_{>J} u ), \\
			& \mathrm{II}(0) = 0,
		\end{cases}
	\end{equation*}
	and so, by Lemma~\ref{lem:forced heat a priori bd},
	\begin{equation} \label{eqn:II bd}
		\norm{ \mathrm{II} }{ C_T X_J^s(\lambda,t_0)}
		\lesssim \norm{ u \otimes P_{>J} u }{ L^2_T X_J^s(\lambda,t_0) }
		\lesssim \sqrt T \norm{ u }{ C_T X_J^s(\lambda,t_0) }^2.
	\end{equation}
	 Now, we set  $T_\# \in (0,T]$ to be the first time at which $\norm{ u }{ C_{T_\#} X_J^s(\lambda,t_0) } = 2 \norm{ u_0 }{ X_J^s(\lambda,t_0) }$. If $\norm{ u }{ C_T X_J^s(\lambda,t_0) } < 2 \norm{ u_0 }{ X_J^s(\lambda,t_0) }$, we set $T_\# \coloneq T$. If $T_\# = T$, the conclusion follows by \eqref{eqn:I bd} and \eqref{eqn:II bd}, with $T$ replaced by $\min\{T_\ast, T\}$. Otherwise, with $T$ replaced by $T_\#$, still from \eqref{eqn:Duhamel}, \eqref{eqn:I bd} and \eqref{eqn:II bd}, and since $\norm{ S(t) u_0 }{ X_J^s(\lambda,t_0) } \le \norm{ u_0 }{ X_J^s(\lambda,t_0) }$, we have that
	\begin{equation*}
		\begin{aligned}
			& 2 \norm{ u_0 }{ X_J^s(\lambda,t_0) } \\
			& \le \norm{ u_0 }{ X_J^s(\lambda,t_0) } + C_\tau \bigl( \re^{ (\frac\lambda4 + \tau) t_0 J } T_\# + \sqrt{ T_\# } \bigr) ( 2 \norm{ u_0 }{ X_J^s(\lambda,t_0)} )^2 \\
			& \le \norm{ u_0 }{ X_J^s(\lambda,t_0) } + C_\tau \bigl( \re^{ (\frac\lambda4 + \tau) t_0 J } T_\# + \sqrt{ \re^{ (\frac\lambda4 + \tau) t_0 J } T_\# } \bigr) ( 2 \norm{ u_0 }{ X_J^s(\lambda,t_0)} )^2 \\
			& \le \norm{ u_0 }{ X_J^s(\lambda,t_0) } + 2 C_\tau \max \bigl\{ \re^{ (\frac\lambda4 + \tau) t_0 J } T_\# , \sqrt{ \re^{ (\frac\lambda4 + \tau) t_0 J } T_\# } \bigr\} ( 2 \norm{ u_0 }{ X_J^s(\lambda,t_0) } )^2.
		\end{aligned}
	\end{equation*}
	From the above it follows that $ T_\# \gtrsim_\tau \min\{ \norm{ u_0 }{ X_J^s(\lambda,t_0) }^{-1} , \norm{ u_0 }{ X_J^s(\lambda,t_0) }^{-2} \} \re^{-(\frac\lambda4+\tau) t_0 J}$. Choosing $c$ small enough (depending on $C_\tau$, which is bounded uniformly for $t_0$ in compact subsets of $(0,\infty)$, by Lemma~\ref{lem:X prod est}) we deduce that $T_\# \ge T_\ast$, so that $\norm{ u(t) }{ X_J^s(\lambda,t_0) } \le 2 \norm{ u_0 }{ X_J^s(\lambda,t_0) }$ for $0 \le t \le T_\ast$. Inserting this bound into \eqref{eqn:I bd} and \eqref{eqn:II bd}, with $T$ replaced by $T_\ast$, we obtain $\norm{ u(t) - S(t) u_0 }{ X_J^s(\lambda,t_0) } \les_\tau  \norm{ u_0 }{ X_J^s(\lambda,t_0) }^2$, which concludes the proof.
\end{proof}

Fix $\lambda,s,J$. In the rest of this section, $u$ solves \eqref{eqn:NSE} on an interval $[T_0,T]$, and we consider the norm $\norm{ u(t) }{X_J^s(\lambda,t)}$, with $X_J^s(\lambda,t)$ as in \eqref{eqn:X defn}. Note that $t \mapsto \re^{\lambda t j} \norm{ P_j u(t) }{ \dot H^s }^2$ is $C^2$, since $u \in C^2([T_0,T]; \dot H^{\sigma - 4})$ by the equation and $s \le \sigma - 6$. We can compute directly
\begin{align}
	\notag & \frac{\rd}{\rd t} \bigl( \re^{\lambda t j} \norm{ P_j u(t) }{ \dot H^s }^2 \bigr) \\
	\label{eqn:rj'} & = \re^{\lambda t j} \bigl( \lambda j \norm{ P_j u(t)}{ \dot H^s }^2 - 2 \norm{ P_j u(t) }{\dot H^{s+1}}^2 + 2 \innerprod{ P_j u(t) }{ B(u(t)) }{ \dot H^s} \bigr), \\
	\notag & \frac{\rd^2}{\rd t^2} \bigl( \re^{\lambda t j} \norm{ P_j u(t) }{ \dot H^s }^2 \bigr) \\
	\label{eqn:rj''} & = \re^{ \lambda t j } \bigl( (\lambda j)^2 \norm{ P_j u(t)}{ \dot H^s }^2 - 4 \lambda j \norm{ P_j u(t) }{\dot H^{s+1}}^2 + 4 \lambda j \innerprod{ P_j u(t) }{ B(u(t)) }{ \dot H^s } \\
	\notag & \hskip 60pt + 4 \norm{ P_j u(t)}{\dot H^{s+2}}^2 - 6 \innerprod{  P_j u(t) }{ B(u(t)) }{ \dot H^{s+1} } + 2\norm{P_jB(u(t))}{\dot H^s}^2\\
	\notag & \hskip 60pt + 2 \innerprod{ P_j u(t) }{ B\big( \Delta u(t) + B(u(t)), u(t)\big) }{ \dot H^s } \\
	\notag & \hskip 60pt + 2 \innerprod{ P_j u(t) }{ B( u(t), \Delta u(t) + B(u(t)) ) }{ \dot H^s } \bigr) \\
	\notag &= \re^{\lambda t j} \Big( j^2 (\lambda -2)^2 \|P_j u(t) \|_{\dot H^s}^2 + 2\norm{P_jB(u(t))}{\dot H^s}^2 \\
	\notag &\phantom{= \re^{\lambda t j}()} + (4 \lambda - 6) j \innerprod{  P_j u(t) }{ B(u(t)) }{\dot H^s} \\
	\notag &\phantom{= \re^{\lambda t j}()} + 2 \innerprod{ P_j u(t) }{ B\big( \Delta u(t), u(t)\big) + B\big(  u(t), \Delta u(t)\big) }{ \dot H^s } \\
	\notag &\phantom{= \re^{\lambda t j}()} + 2 \innerprod{ P_j u(t) }{ B\big( B( u(t)), u(t)\big) + B\big(  u(t), B( u(t))\big) }{ \dot H^s } \Big).
\end{align}
In the last equality we used that $|n|^2 = j$ for every $n \in A_j$ (as $|n|^2 \in \ZZ$), so that, for example, $\norm{P_ju}{\dot H^{s+1}}^2 = j \norm{P_ju}{\dot H^s}^2$.
We have the following lower bound.
\begin{prop} \label{lem:r'' bd}
	There exist $1 < \lambda_0 < 2$ and $\delta_0 > 0$ such that the following holds. Let $\lambda_0 < \lambda < 2$ and $0 < \delta < \delta_0$. Let $K \ge 1$, let $j \le J$, and let $0 \le T_0 \le t_1 < T$ with $t_1 > 0$. Suppose that $u$ is a solution to \eqref{eqn:NSE} on $[T_0,T]$, such that $\norm{ u }{ C([T_0,T];\dot H^\sigma)} \le K$, and that $\frac d2 < s \le \sigma - 6$. Then there exist $\eta = \eta(\lambda,\delta) \in (0,1)$, $A = A(\lambda,\delta) > 0$ and $C = C(d,s,\lambda,\delta,t_1,T) > 0$, independent of $j$, $J$ and $K$, such that, setting $\lambda_J \coloneq (\lambda - 1 - \eta)J$,
	\begin{equation} \label{eqn:r'' bd}
		\begin{aligned}
			\frac{\rd^2}{\rd t^2} \bigl( \re^{\lambda t j} \norm{ P_j u(t) }{ \dot H^s }^2 \bigr)
			& \ge \re^{\lambda t j} \norm{ P_j B(u(t)) }{ \dot H^s }^2 - j^2 \re^{\lambda t j} \norm{ P_j u(t) }{ \dot H^s }^2 \\
			& \hskip 15pt - \re^{ - \delta tJ} \norm{ B(u(t)) }{ X_J^s(\lambda,t) }^2 - \norm{ u(t) }{ X_J^s(\lambda,t) }^{A} - C K^4 \re^{\lambda_J t}
		\end{aligned}
	\end{equation}
	for all $t \in [t_1,T]$.
\end{prop}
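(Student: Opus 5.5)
We start from the explicit formula \eqref{eqn:rj''} for the second derivative and bound each term from below. The term $j^2(\lambda-2)^2\re^{\lambda tj}\norm{P_ju}{\dot H^s}^2$ is nonnegative and can be dropped. We keep $2\re^{\lambda tj}\norm{P_jB(u)}{\dot H^s}^2$, and use half of it to absorb the cross term $(4\lambda-6)j\re^{\lambda tj}\innerprod{P_ju}{B(u)}{\dot H^s}$. By Cauchy--Schwarz and $ab\le \frac12a^2+\frac12b^2$, using $|4\lambda-6|\le 2$,
\[
|(4\lambda-6)j\innerprod{P_ju}{P_jB(u)}{\dot H^s}|\le \tfrac12\norm{P_jB(u)}{\dot H^s}^2+2j^2\norm{P_ju}{\dot H^s}^2 .
\]
The constant in front of $j^2$ may need to be adjusted, for instance by splitting with a different weight or by using that $\lambda$ is close to $2$, so that $|4\lambda-6|\approx 2$. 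Since the target only allows $-j^2\re^{\lambda tj}\norm{P_ju}{\dot H^s}^2$, I would choose the weights so that the $j^2$ coefficient is at most $1$. This is done by giving the square term the weight $\varepsilon$ and the cross term the weight $1/\varepsilon$ appropriately, while keeping at least one full copy of $\norm{P_jB(u)}{\dot H^s}^2$. This leaves the two trilinear/quartic terms
\[
2\re^{\lambda tj}\innerprod{P_ju}{B(\Delta u,u)+B(u,\Delta u)}{\dot H^s}
\]
and the analogous one with $B(u)$ in place of $\Delta u$.

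For these remaining terms we decompose $u=P_{\le J}u+P_{>J}u$ in each slot and use the frequency-support constraint of Lemma~\ref{lem:Lambda}. In pieces where all inputs are in $P_{\le J}$, the output at level $j$ forces $(k,\ell)\in\Lambda(j)$. As in Lemma~\ref{lem:X prod est}, we then gain $\re^{-\frac\lambda4 tj}$ from each of the two inputs relative to the weight $\re^{\frac\lambda2 tj}$ assigned to $P_ju$. Polynomial losses from $\Delta$ and derivatives are absorbed by small exponentials. The quartic piece $\innerprod{P_ju}{B(B(u),u)}{}$ is handled similarly. The factor $B(u)$ is controlled in $X_J^s(\lambda,t)$, and Cauchy--Schwarz, weighted by $\re^{-\delta tJ}$, splits off $\re^{-\delta tJ}\norm{B(u)}{X_J^s(\lambda,t)}^2$ plus a power of $\norm{u}{X_J^s(\lambda,t)}$. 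We need a net exponential gain to pay for $\re^{\delta tJ}$. This is where $\lambda$ close to $2$ is used: the total available decay of the three-fold product exceeds the weight $\re^{\lambda tj}$ when $\lambda>\lambda_0$ and $\delta<\delta_0$. All such low-frequency contributions are collected into $\norm{u}{X_J^s(\lambda,t)}^A$, with $A$ chosen large to dominate any product of norms (plus $1$).

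Pieces containing at least one $P_{>J}u$ factor are estimated instead using the $\dot H^\sigma$ bound $K$; since $s\le\sigma-6$, we can afford $\Delta$ and $B$ acting inside. Here $\re^{\lambda tj}\le\re^{\lambda tJ}$, while $P_{>J}u$ only enjoys the a priori decay coming from the $X$ norm, $\re^{-\frac\lambda2 tJ}\norm{u}{X}$. So I would use the $X$ norm for one high factor and $K$ for the others. Combining gives a bound of the form $K^4\re^{(\lambda-1-\eta)tJ}$, after accounting for the remaining low factors via $\Lambda(j)$, with the leftovers going into $\norm{u}{X}^A$. The resulting $C$ depends on $t_1,T$ through $J^p\lesssim\re^{\eta tJ}$ for $t\ge t_1$.

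The main obstacle I expect is this exponent bookkeeping for the mixed high/low terms, in particular the quartic term $B(B(u),u)$ with a high-frequency factor. The goal is to obtain exactly the exponent $\lambda_J=(\lambda-1-\eta)J$, rather than something like $\lambda J$. I would first try splitting off each $P_{>J}$ factor using $\norm{P_{>J}u}{\dot H^s}\le\re^{-\frac\lambda2 tJ}\norm{u}{X}$ twice whenever possible, and bounding it by $K$ only when a single high factor appears. If that fails, I would enlarge the remainder by Young's inequality into $\norm{u}{X}^A$.
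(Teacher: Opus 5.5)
\textbf{There is a genuine gap, in the quartic low-frequency terms.} Your treatment of the square terms and of the cross term $(4\lambda-6)j\re^{\lambda tj}\innerprod{P_ju}{B(u)}{\dot H^s}$ is fine once you absorb the cross term with one full copy of $\re^{\lambda tj}\norm{P_jB(u)}{\dot H^s}^2$. The $j^2$-coefficient is then $(2\lambda-3)^2\le 1$, exactly as in the paper.

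The gap concerns $\innerprod{P_ju}{B(B(u),u)+B(u,B(u))}{\dot H^s}$ with low-frequency inputs, and taking $\lambda$ close to $2$ does not repair it. Write $x\coloneq\norm{u}{X_J^s(\lambda,t)}$ and $y\coloneq\norm{B(u)}{X_J^s(\lambda,t)}$.

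\begin{itemize}
\item If you control $P_kB(u)$ by $\re^{-\frac\lambda2 tk}y$, the $(k,\ell)$-piece is bounded, up to polynomial factors, by $\re^{\frac\lambda2 t(j-k-\ell)}x^2y$.
\item $\Lambda(j)$ only gives $k+\ell\ge\frac j2-2$, and this is attained for $n_1\approx n_2\approx\frac n2$. So the three factors decay like $\re^{-\frac{3\lambda}{4}tj}$ against the weight $\re^{\lambda tj}$. There is a net \emph{loss} $\re^{\frac\lambda4 tJ}$, not a gain: the two inputs together gain $\re^{-\frac\lambda4 tj}$, not each of them.
\item Any splitting of $\re^{\frac\lambda4 tJ}x^2y$ that produces $\eps\re^{-\delta tJ}y^2$ leaves at least $\sup_{y\ge0}\bigl(\re^{\frac\lambda4 tJ}x^2y-\eps\re^{-\delta tJ}y^2\bigr)=\frac1{4\eps}\re^{(\frac\lambda2+\delta)tJ}x^4$.
\item This remainder cannot go into $\norm{u}{X_J^s(\lambda,t)}^A+CK^4\re^{\lambda_Jt}$. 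Already at $x=1$, a term $\re^{atJ}x^p$ fits there only if $a\le\lambda-1-\eta$. But $\frac\lambda2+\delta>\lambda-1-\eta$ for every $\lambda<2$.
\end{itemize}

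\textbf{The missing idea is to spend only half of the $X$-information on $B(u)$ and pay for the other half with $K$.} By the algebra property,
\[
\norm{P_kB(u)}{\dot H^s}\le\norm{B(u)}{\dot H^s}^{1/2}\norm{P_kB(u)}{\dot H^s}^{1/2}\les K\re^{-\frac\lambda4 tk}y^{1/2}.
\]
Since $\frac k4+\frac\ell2\ge\frac j6-\frac34$ on $\Lambda(j)$, these pieces become $\les K\re^{(\frac\lambda3+\kappa)tJ}x^2y^{1/2}$, with $\kappa$ absorbing polynomial factors.
\begin{itemize}
\item Young with exponents $4,\frac43$ gives $\eps\re^{-\delta tJ}y^2+CK^{4/3}\re^{\frac43(\frac\lambda3+\kappa+\frac\delta4)tJ}x^{8/3}$.
\item A second Young with exponents $\frac{A}{A-p},\frac Ap$ ($A$ large) puts the last term into $\eps x^A+CK^4\re^{\lambda_Jt}$, provided $\frac\lambda3+\kappa+\frac\delta4<\frac34(\lambda-1-\eta)$.
\item This condition holds at $\lambda=2$, $\kappa=\delta=\eta=0$, since $\frac23<\frac34$. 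This is where $\lambda_0$ near $2$ and $\delta_0$ small really enter.
\item A genuine interpolation is necessary. Using a power $\theta$ of $y$ gives the rate $\frac{\lambda}{(2-\theta)(1+\theta)}$ as $\delta\to0$, which equals the inadmissible $\frac\lambda2$ at both $\theta=0$ and $\theta=1$.
\end{itemize}

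\textbf{The same issue affects your other terms.} Nothing can be ``collected into $\norm{u}{X_J^s(\lambda,t)}^A$'' directly:
\begin{itemize}
\item The cubic low terms are of size $\re^{(\frac\lambda4+\kappa)tJ}x^3$.
\item A high factor such as $P_{>J}\Delta u$ must be interpolated, $\norm{P_{>J}u}{\dot H^{s+2}}\les K^{1/2}\re^{-\frac\lambda4 tJ}x^{1/2}$, because bounding it by $K$ alone leaves a loss $\re^{\frac\lambda2 tJ}$.
\end{itemize}
Every error term has to be brought to the form $K^b\re^{atJ}x^p$ or $K^b\re^{atJ}x^py^{1/2}$ with an explicit admissible rate $a$. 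The term $CK^4\re^{\lambda_Jt}$ is exactly the resulting Young remainder. It does not come out of the high-frequency terms directly, as your sketch suggests.
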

\begin{proof}
	Throughout the proof we write $u$ for $u(t)$.
	Let $\kappa \ll 1$ and $\eta = \eta(\lambda,\delta) \ll 1$ be such that
	\begin{equation} \label{eqn:r'' bd param}
		\tfrac\lambda3 + \kappa + \tfrac\delta4 < \tfrac34 ( \lambda - 1 - \eta ).
	\end{equation}
	This is possible if we take $\lambda_0 < 2$ close enough to $2$, and $\delta_0 \ll 1$, since \eqref{eqn:r'' bd param} holds for $\lambda = 2$ and $\kappa = \delta = \eta = 0$.
	Since $t \ge t_1$, polynomial factors in $J$ can be absorbed into exponentials, $J^p \le C(p,\kappa,t_1) \re^{\kappa t J}$, and we will do so without further comment. Finally, we set $\eps \ll 1$, and we allow the constant $C$ to depend on $d$, $s$, $\lambda$, $\delta$, $t_1$ and $T$.
	We will use Young's inequality in the following form. Let $C', \eps' > 0$, $p > 0$, $0 \le b \le \frac83$ and $0 \le a < \lambda - 1 - \eta$, and let $1 < q \le \frac32$ be such that $q a \le \lambda - 1 - \eta$ (which exists since $a < \lambda - 1 - \eta$). If $A \ge \frac{q p}{q - 1}$, so that $\frac{A}{A - p} \le q$, then Young's inequality with exponents $\frac{A}{A - p}$ and $\frac Ap$ gives
	\begin{equation} \label{eqn:Young1}
		C' K^b \re^{a t J} x^p
		\le \eps' x^A + C \bigl( K^b \re^{a t J} \bigr)^{\frac{A}{A - p}}
		\le \eps' x^A + C K^4 \re^{\lambda_J t}
		\qquad \text{for all } x \ge 0,
	\end{equation}
	since $K \ge 1$, $b q \le 4$ and $a q \le \lambda - 1 - \eta$. Here $C$ depends also on $C'$, $\eps'$, $a$, $b$, $p$ and $A$. Moreover, if $0 \le b \le 2$ and $a + \frac\delta4 < \frac34 ( \lambda - 1 - \eta )$, then, for $A$ large enough, Young's inequality with exponents $4$ and $\frac43$, followed by \eqref{eqn:Young1}, gives
	\begin{equation} \label{eqn:Young2}
		C' K^b \re^{a t J} x^p y^{\frac12}
		\le \eps' \re^{-\delta t J} y^2 + \eps' x^A + C K^4 \re^{\lambda_J t}
		\qquad \text{for all } x, y \ge 0.
	\end{equation}
	Below, we apply \eqref{eqn:Young1} and \eqref{eqn:Young2} with $x = \norm{ u }{ X_J^s(\lambda,t) }$, $y = \norm{ B(u) }{ X_J^s(\lambda,t) }$, $\eps' = \frac\eps2$, and finitely many values of $C'$, $a$, $b$ and $p$, and we fix $A = A(\lambda,\delta)$ large enough for all of them. In each case, the condition on $a$ follows from \eqref{eqn:r'' bd param}.
	By \eqref{eqn:rj''},
	\begin{equation*}
		\begin{aligned}
			\frac{\rd^2}{\rd t^2} \bigl( \re^{\lambda t j} \norm{ P_j u }{ \dot H^s }^2 \bigr)
			& = (2-\lambda)^2 j^2 \re^{\lambda t j} \norm{ P_j u }{ \dot H^s }^2 + 2 \re^{\lambda t j} \norm{ P_j B(u) }{ \dot H^s }^2 \\
			& \hskip 15pt + (4\lambda - 6) j\, \re^{\lambda t j} \innerprod{P_j u}{B(u)}{\dot H^s} + 2 \re^{\lambda tj} \innerprod{P_ju}{N(u)}{\dot H^s},
		\end{aligned}
	\end{equation*}
	where $N(u) \coloneq B(\Delta u, u) + B(u, \Delta u) + B(B(u),u) + B(u,B(u))$.

	By the Cauchy--Schwarz and Young inequalities,
	\[\bigl| (4\lambda - 6) j \re^{\lambda t j} \innerprod{P_j u}{B(u)}{\dot H^s} \bigr| \le \re^{\lambda t j} \norm{ P_j B(u) }{ \dot H^s }^2 + (2\lambda - 3)^2 j^2 \re^{\lambda t j} \norm{ P_j u }{ \dot H^s }^2.\]
	Since $(2 - \lambda)^2 - (2\lambda - 3)^2  \ge -1$ for $1 < \lambda < 2$, we obtain
	\begin{equation*}
		\begin{aligned}
			& (2 - \lambda)^2 j^2 \re^{\lambda t j} \norm{ P_j u }{ \dot H^s }^2 + 2 \re^{\lambda t j} \norm{ P_j B(u) }{ \dot H^s }^2 - \bigl| (4\lambda - 6) j \re^{\lambda t j} \innerprod{P_j u}{B(u)}{\dot H^s} \bigr| \\
		& \ge \re^{\lambda t j} \norm{ P_j B(u) }{ \dot H^s }^2 - j^2 \re^{\lambda t j} \norm{ P_j u }{ \dot H^s }^2,
		\end{aligned}
	\end{equation*}
	and, by the first display, it remains to prove that
	\begin{equation} \label{eqn:N bd}
		2\re^{\lambda tj}\bigl| \innerprod{P_ju}{N(u)}{\dot H^s} \bigr| \le \re^{-\delta t J} \norm{ B(u) }{ X_J^s(\lambda,t) }^2 + \norm{ u }{ X_J^s(\lambda,t) }^{A} + CK^4 \re^{\lambda_J t}.
	\end{equation}
	Let $(f,g)$ be one of $(\Delta u, u)$, $(u,\Delta u)$, $(B(u),u)$, $(u,B(u))$. Decomposing
	\[\mathrm{id} = \sum_{k \le J} P_k + P_{>J},\]
	and using the algebra property, we obtain
	\begin{equation} \label{eqn:N bd fg}
		\begin{aligned}
			& \re^{\lambda tj} \bigl| \innerprod{P_ju}{B(f,g)}{\dot H^s} \bigr| \\
			& \lesssim \re^{\frac\lambda2 t j} \bigl( \re^{\frac\lambda2 t j} \norm{ P_j u }{ \dot H^s } \bigr) \\
			&\phantom{\les \re^{\frac\lambda2 t j}} \times\Bigl( \sum_{\substack{k,\ell \le J \\ (k,\ell)\in\Lambda(j)}} \norm{P_kf}{\dot H^s}\norm{P_\ell g}{\dot H^{s+1}} + \sum_{\ell \le J}\norm{P_{>J}f}{\dot H^s}\norm{P_\ell g}{\dot H^{s+1}} \\
			& \phantom{\les \re^{\frac\lambda2 t j} \times\Bigl(\Bigr)} + \sum_{k\le J}\norm{P_k f}{\dot H^s}\norm{P_{>J}g}{\dot H^{s+1}}
			+ \norm{P_{>J}f}{\dot H^s}\norm{P_{>J}g}{\dot H^{s+1}} \Bigr).
		\end{aligned}
	\end{equation}
	Each sum has $\les J^2$ terms. We bound the possible contributions using
	the following bounds. Let $k \le J$ and $s' \ge 0$. For $(k,\ell) \in \Lambda(j)$, we have
	\begin{equation} \label{eqn:low bds}
		\begin{aligned}
			& \norm{P_k u}{\dot H^{s+s'}} = k^{\frac {s'}2} \re^{-\frac\lambda2 t k} \bigl( \re^{\frac\lambda2 t k} \norm{P_k u}{\dot H^s} \bigr), \\
			& \norm{P_k B(u)}{\dot H^{s+s'}} \le k^{\frac{s'}{2}}\norm{B(u)}{\dot H^s}^\frac12 \norm{P_k B(u)}{\dot H^{s}}^\frac12 \\
			& \hphantom{\norm{P_k B(u)}{\dot H^{s+s'}}} \les k^{\frac {s'}2} K \re^{-\frac\lambda4 t k} \bigl( \re^{\frac\lambda4 t k} \norm{P_k B(u)}{\dot H^s}^{\frac12} \bigr),
		\end{aligned}
	\end{equation}
	where for the last bound we used $\norm{B(u)}{\dot H^s} \lesssim K^2$ by the algebra property. Similarly, by interpolation (Lemma~\ref{lem:basics}\,(ii)) and $\sigma \ge s + 6$, for $0 \le s' \le 3$ we have that
	\begin{equation} \label{eqn:high bds}
		\begin{aligned}
			& \norm{P_{>J} u}{\dot H^{s+s'}} \lesssim \norm{P_{>J} u}{\dot H^s}^{\frac12} \norm{P_{>J} u}{\dot H^{s+2s'}}^{\frac12} \le K^{\frac12} \re^{-\frac\lambda4 t J} \bigl( \re^{\frac\lambda4 t J} \norm{P_{>J} u}{\dot H^s}^{\frac12} \bigr), \\
			& \norm{P_{>J} B(u)}{\dot H^{s+1}} \lesssim K \re^{-\frac\lambda4 tJ} \bigl( \re^{\frac\lambda4 tJ} \norm{P_{>J} B(u)}{\dot H^s}^{\frac12} \bigr),
		\end{aligned}
	\end{equation}
	where in the last bound we used $\norm{B(u)}{\dot H^{s+2}} \lesssim \norm{u}{\dot H^{s+3}}^2 \le K^2$.
	Using \eqref{eqn:low bds}, \eqref{eqn:high bds} and $j \le J$, the first sum in \eqref{eqn:N bd fg} is bounded by
	\begin{align*}
		& \lesssim \sum_{\substack{k,\ell \le J \\ (k,\ell)\in\Lambda(j)}} J^{\frac32} \Bigl( \re^{\frac\lambda2 t (j - k - \ell)} \norm{ u }{ X_J^s(\lambda,t) }^3 \\
		& \hskip 30pt + K \bigl( \re^{\lambda t (\frac j2 - \frac k4 - \frac \ell2)} + \re^{\lambda t (\frac j2 - \frac k2 - \frac \ell4)} \bigr) \norm{ u }{ X_J^s(\lambda,t) }^2 \norm{ B(u) }{ X_J^s(\lambda,t) }^{\frac12} \Bigr) \\
		& \lesssim \re^{(\frac\lambda4 + \kappa) t J} \norm{ u }{ X_J^s(\lambda,t) }^3 + K \re^{(\frac\lambda3 + \kappa) t J} \norm{ u }{ X_J^s(\lambda,t) }^2 \norm{ B(u) }{ X_J^s(\lambda,t) }^{\frac12} \\
		& \le \eps \re^{-\delta tJ}\norm{ B(u) }{ X_J^s(\lambda,t) }^2 + \eps \norm{ u }{ X_J^s(\lambda,t) }^{A} + CK^4\re^{\lambda_J t}
	\end{align*}
	since $(k,\ell) \in \Lambda(j)$ (recall \eqref{eqn:kljinequality} in Lemma~\ref{lem:Lambda}, which gives $j - k - \ell \le \frac j2 + 2$ and $\frac k4 + \frac\ell2, \frac k2 + \frac\ell4 \ge \frac j6 - \frac34$) and by \eqref{eqn:Young1} with $(a,b,p) = (\frac\lambda4 + \kappa, 0, 3)$ for the first term, and by \eqref{eqn:Young2} with $(a,b,p) = (\frac\lambda3 + \kappa, 1, 2)$ for the second term; the conditions on $a$ hold by \eqref{eqn:r'' bd param}, since $\frac\lambda4 + \kappa < \frac\lambda3 + \kappa + \frac\delta4 < \frac34 ( \lambda - 1 - \eta )$.
	Next, if one of $f$ or $g$ is equal to $P_{>J}u$ or $P_{>J}\Delta u$, by \eqref{eqn:high bds} with $\norm{ u }{C_T \dot H^\sigma} \le K$, the total contribution is
	\begin{equation*}
		\begin{aligned}
		& \lesssim K^{\frac52}\re^{(\frac\lambda4+\kappa)tJ}\norm{ u }{ X_J^s(\lambda,t) }^{\frac32} \\
		& \le \eps\re^{-\delta tJ}\norm{ B(u) }{ X_J^s(\lambda,t) }^2 + \eps\norm{ u }{ X_J^s(\lambda,t) }^{A} + CK^4\re^{\lambda_J t}
		\end{aligned}
	\end{equation*}
	by \eqref{eqn:Young1} with $(a,b,p) = (\frac\lambda4 + \kappa, \frac52, \frac32)$.
	The remaining terms are bounded by $\norm{ P_k u }{\dot H^{s + 1}} \norm{ P_{>J} B(u) }{\dot H^{s+1}}$ (note that $\norm{ P_{>J} B(u) }{\dot H^{s}} \le \norm{ P_{>J} B(u) }{\dot H^{s+1}}$) and similarly by \eqref{eqn:high bds} these contribute
	\begin{equation*}
		\begin{aligned}
		& \lesssim K^2\re^{(\frac\lambda4+\kappa)tJ} \norm{ u }{ X_J^s(\lambda,t) } \norm{ B(u) }{ X_J^s(\lambda,t) }^{\frac12} \\
		& \le \eps\re^{-\delta tJ}\norm{ B(u) }{ X_J^s(\lambda,t) }^2 + \eps\norm{ u }{ X_J^s(\lambda,t) }^{A} + CK^4\re^{\lambda_J t}
		\end{aligned}
	\end{equation*}
	by \eqref{eqn:Young2} with $(a,b,p) = (\frac\lambda4 + \kappa, 2, 1)$.

	Summing the contributions, we obtain \eqref{eqn:N bd}, and hence \eqref{eqn:r'' bd}.
\end{proof}

\section{Lower bound and proof of singularity} \label{sec:sing}
In this section we combine the deterministic estimates of Section~\ref{sec:local} with the Gaussian estimates of Section~\ref{sec:gaussian}.
To compare the law of the solution at time $t$ with $\mu_{s_0 + t}$, we will work with the time-translated solution $u(\cdot - s_0)$, which is a solution on $[s_0, s_0 + T]$ (in the sense of Section~\ref{sec:local}).
Let $u$ be such a solution, and suppose that the set $\cT$ of times at which $u(t)$ satisfies the Gaussian predictions of Proposition~\ref{prop:Gaussian} has positive measure. Let $t_0$ be a Lebesgue point of $\cT$. On an interval $[t_0 - h_0, t_0]$ with $h_0 \approx \re^{-(\frac\lambda4 + \tau) t_0 j_k}$, Proposition~\ref{prop:local theory}\,(i) keeps the weighted norm of $u$ bounded, and we can integrate twice the convexity estimate of Proposition~\ref{prop:local theory}\,(ii) with $j = J = j_k$. Since $t \in \cT$ for a positive proportion of times $t \in [t_0 - h_0, t_0 - \frac12 h_0]$, the nonnegative term contributes at least $|A_{j_k}| h_0^2 \re^{(\lambda - 1 - \eta_2) t_0 j_k}$ (up to constants), while the error terms are smaller (Proposition~\ref{prop:lower bound}). Since $\frac\lambda4 < \frac12$, this lower bound is much larger than the bound $M_0^2 |A_{j_k}| \re^{(\lambda - 2 + \eta_0) t_0 j_k}$, which holds at $t_0$ since $t_0 \in \cT$. This contradiction (Corollary~\ref{lem:soln not too smooth}) shows that $\cT$ has measure zero, and Theorem~\ref{thm:main precise} follows by the Fubini argument of Subsection~\ref{subsec:pf pf} (see \eqref{eqn:fub trick}).

\subsection{Lower bound at Lebesgue points} \label{subsec:lower bound}
 Throughout this section, we fix
\begin{equation} \label{eqn:param1}
	0 < \delta < \delta_0, \qquad
	\lambda_0 < \lambda < 2, \qquad
	2 - \lambda < \delta,
\end{equation}
where $\lambda_0$ and $\delta_0$ are given by Proposition~\ref{prop:local theory}\,(ii), and where, increasing $\lambda_0$ if necessary, we assume that $\lambda_0 \ge \frac32$.
We let $\eta = \eta(\lambda,\delta) \in (0,1)$ and $A = A(\lambda,\delta) > 0$ be given by Proposition~\ref{prop:local theory}\,(ii). We then fix $0< \tau, \eta_0, \eta_1, \eta_2 \ll 1$, such that
\begin{equation} \label{eqn:param2}
	2\tau + \eta_0 + 2\eta_1 + 2\eta_2 < \tfrac14 (2 - \lambda), \qquad \eta_2 < \eta,
\end{equation}
where $\eta_0, \eta_1, \eta_2$ are the parameters appearing in \eqref{eqn:G1 defn} -- \eqref{eqn:G defn}
in the definition of the sets $\cG^{(1)}_{t;\vec M,j}, \cG^{(2)}_{t;j}$. We can then apply Proposition~\ref{prop:Gaussian}, and have that
\[		\mu_t ( \cG_t ) = 1  \]
with these parameters.

For $\vec M = (M_0,M_1)$ and $k_0 \ge 1$, let
\begin{equation} \label{eqn:GMk0 defn}
	\cG_{t;{\vec M, k_0}}
	\coloneq \bigcap_{j \ge 1} \cG_{t;\vec M,j}^{(1)} \cap \bigcap_{k \ge k_0} \cG_{t; j_k}^{(2)},
	\end{equation}
so that
	\[\cG_t = \bigcup_{\vec M, k_0} \cG_{t;\vec M,k_0},\]
where the union is increasing in $\vec M$ (componentwise) and in $k_0$. Given $0 \le T_0 < T$ and a solution $u$ to \eqref{eqn:NSE} on $[T_0,T]$, we define the set of times
\begin{equation*}
	\cT_{M_0,M_1,k_0}
	\coloneq \bigl\{ t \in [T_0,T] \colon u(t) \in \cG_{t;\vec M,k_0} \bigr\}.
\end{equation*}
By \eqref{eqn:rj'}, the quantity bounded in the second condition of \eqref{eqn:G1 defn}, evaluated at $u(t)$, is exactly $\bigl| \frac{\rd}{\rd t} \bigl( \re^{\lambda t j} \norm{ P_j u(t) }{ \dot H^s }^2 \bigr) \bigr|$. Therefore, explicitly,
\begin{equation} \label{eqn:good times}
	\begin{aligned}
		\cT_{M_0,M_1,k_0}
		= \bigl\{ t \in [T_0,T] \colon
		& \re^{\lambda t j} \norm{ P_j u(t) }{ \dot H^s }^2 \le M_0^2 |A_j| \re^{(\lambda - 2 + \eta_0) t j}, \\
		& \Bigl| \frac{\rd}{\rd t} \bigl( \re^{\lambda t j} \norm{ P_j u(t) }{ \dot H^s }^2 \bigr) \Bigr| \le M_1^2 |A_j| \re^{(\lambda - \frac32 + \eta_1) t j} \ \text{for all} \ j \ge 1, \\
		& \re^{\lambda t j_k} \norm{ P_{j_k} B(u(t)) }{ \dot H^s }^2 \ge |A_{j_k}| \re^{(\lambda - 1 - \eta_2) t j_k} \ \text{for all} \ k \ge k_0 \bigr\},
	\end{aligned}
\end{equation}
where we recall that $(j_k)$ is the subsequence for which $|A_{j_k}| \ge 1$ and $\sum_k |A_{j_k}|^{-1} < \infty$, as in Lemma~\ref{lem:Aj count}.
Using Proposition~\ref{prop:local theory}, we are ready to prove the following lower bound, which holds at Lebesgue points $t_0$ of the set of times on which the Gaussian estimates of Section~\ref{sec:gaussian} hold. In this subsection, we regard $M_0,M_1$ and $k_0$ as fixed. The main idea is that if $t_0$ is a Lebesgue point of $\cT_{M_0,M_1,k_0}$, there must exist $h_0$ small (of size roughly $T_*(M_0, j)$) such that $t_0-h_0 \in \cT_{M_0,M_1,k_0}$, and so Proposition~\ref{prop:local theory}\,(i) applies to the whole interval $[t_0-h_0,t_0]$. In particular, in view of \eqref{eqn:convexity}, we have that
\[ \re^{\lambda t_0 j} \norm{ P_j u(t_0) }{ \dot H^s }^2 \ge \iint_{[t_0-h_0,t_0]^2} \re^{\lambda t j} \norm{ P_j B(u(t)) }{ \dot H^s }^2 \1_{t \le t'} \, \rd t \, \rd t' + \text{ error terms}.\]
We can then use the fact that $t_0$ is a Lebesgue point of $ \cT_{M_0,M_1,k_0}$ to bound the integrand from below on a large proportion of the interval $[t_0-h_0,t_0]^2$. In particular, we obtain the following.
\begin{prop}[Lower bound at Lebesgue points] \label{prop:lower bound}
	Let $K \ge 1$, $M_0 \ge 1$, $M_1 \ge 0$ and $k_0 \ge 1$, let $0 \le T_0 < T$, and let $u \colon [T_0,T] \times \TT^d \to \RR^d$ be a solution to \eqref{eqn:NSE} on $[T_0,T]$ such that $\norm{ u }{C([T_0,T];\dot H^\sigma)} \le K$. Suppose that $T_0 < t_0 < T$ is a Lebesgue point of $\cT \coloneq \cT_{M_0,M_1,k_0}$. Then there exists a constant $C_0 \ge 1$, depending only on $d$, $\lambda$, $\eta_0$ and $t_0$, and there exist constants $c' > 0$ and $C \ge 1$, independent of $k$, such that, with $T_\ast = T_\ast(C_0 M_0, j_k)$ as in \eqref{eqn:T ast} (where the constant $c$ is chosen uniformly for times in $[\frac{t_0}2, t_0]$), we have
	\begin{equation} \label{eqn:lower bound}
		\begin{aligned}
			\re^{\lambda t_0 j_k} \norm{ P_{j_k} u(t_0) }{ \dot H^s }^2 \ge c' T_\ast^2 \re^{(\lambda - 1 - \eta_2) t_0 j_k} - C T_\ast (M_0^{A+2} + M_1^2) \re^{(\lambda - \frac32 + 2\eta_1) t_0 j_k}
		\end{aligned}
	\end{equation}
	for every $k$ sufficiently large, depending on $K$, $M_0$, $M_1$, $k_0$, $T_0$, $t_0$ and $T$.
\end{prop}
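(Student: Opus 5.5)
Write $j = J = j_k$ and $r(t) \coloneq \re^{\lambda t j} \norm{ P_j u(t) }{ \dot H^s }^2$. The plan is to integrate the convexity estimate of Proposition~\ref{prop:local theory}\,(ii) twice over an interval $[t_0 - h_0, t_0]$ of length $h_0 \approx T_\ast$, with the left endpoint chosen inside $\cT$.

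\textbf{Step 1: choice of the interval.} Since $t_0$ is a Lebesgue point of $\cT$ and $T_\ast \to 0$ as $k \to \infty$, for $k$ large the set $\cT$ fills at least $\frac34$ of every interval $[t_0 - h, t_0]$ with $h \le 2T_\ast$. In particular we can pick $t_0 - h_0 \in \cT$ with $h_0 \in [T_\ast, 2T_\ast]$, where we first shrink $c$ in \eqref{eqn:T ast} by a harmless factor.

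\textbf{Step 2: Taylor expansion at the left endpoint.} Write
\[ r(t_0) = r(t_0 - h_0) + h_0 r'(t_0 - h_0) + \int_{t_0 - h_0}^{t_0} \int_{t_0 - h_0}^{t'} r''(t) \, \rd t \, \rd t'. \]
Since $r \ge 0$, the first term can be dropped. Because $t_0 - h_0 \in \cT$, \eqref{eqn:good times} gives
\[ |r'(t_0 - h_0)| \le M_1^2 |A_j| \re^{(\lambda - \frac32 + \eta_1)(t_0 - h_0) j}. \]
The factor $|A_j|$ is polynomial in $j$ and is absorbed into $\re^{\eta_1 t_0 j}$. This produces the $-C T_\ast M_1^2$ term in \eqref{eqn:lower bound}.

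\textbf{Step 3: norm control on the interval.} From $t_0 - h_0 \in \cT$ we get
\[ \norm{ u(t_0 - h_0) }{ X_J^s(\lambda, t_0) }^2 \le \sum_{j'} M_0^2 |A_{j'}| \re^{\lambda (t_0 - t_0 + h_0) j'} \re^{(\lambda - 2 + \eta_0)(t_0 - h_0) j'} \le C_0^2 M_0^2, \]
because the weight at $t_0$ differs from the one at $t_0 - h_0$ by at most $\re^{\lambda h_0 J}$, which is $O(1)$ since $h_0 J \to 0$. This explains why $C_0$ depends on $d, \lambda, \eta_0, t_0$. Proposition~\ref{prop:local theory}\,(i), started from $t_0 - h_0$ with datum norm $\le C_0 M_0$ and existence time $T_\ast(C_0 M_0, J)$, gives $\norm{ u(t) }{ X_J^s(\lambda, t_0) } \le 2 C_0 M_0$ on the whole interval. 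Since $X_J^s(\lambda, t) \le X_J^s(\lambda, t_0)$ for $t \le t_0$, the same bound holds for $\norm{ u(t) }{ X_J^s(\lambda, t) }$. The interval has length up to $2T_\ast$, so I will apply (i) twice or absorb the factor into $c$.

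\textbf{Step 4: inserting \eqref{eqn:convexity}.}
\begin{itemize}
\item \emph{Main term.} For $t \in \cT$, the nonnegative term is at least $|A_j| \re^{(\lambda - 1 - \eta_2) t j}$, and $\re^{(\lambda - 1 - \eta_2)(t - t_0) j} \ge c$ since $h_0 j \to 0$. The double integral over the region $\{t \le t'\}$ restricted to $t \in \cT$ has measure $\gtrsim h_0^2$ by the density $\frac34$. This gives $c' T_\ast^2 \re^{(\lambda - 1 - \eta_2) t_0 j}$.
\item \emph{The term $- j^2 r(t)$.} Use $r(t) \lesssim M_0^2 \re^{\lambda (t - t_0) j}\cdot$ bounds from Step 3. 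This gives a total of $h_0^2 j^2 M_0^2$, which is negligible.
\item \emph{The terms $\norm{ u }{ X }^A \le (2 C_0 M_0)^A$ and $C K^4 \re^{\lambda_J t}$.} These contribute $h_0^2 M_0^A$ and $h_0^2 K^4 \re^{(\lambda - 1 - \eta) t_0 j}$. The latter is dominated by the main term, since $\eta_2 < \eta$ and $k$ is large depending on $K$.
\item \emph{The term $\re^{-\delta t J} \norm{ B(u) }{ X_J^s(\lambda, t) }^2$.} Apply Lemma~\ref{lem:Z>E} with $K$ and the norm bound. This gives $\lesssim \re^{(\frac\lambda2 + \tau - \delta) t J} (M_0^4 + K M_0^3)$. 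Since $\delta > 2 - \lambda$, we have $\frac\lambda2 + \tau - \delta < \lambda - \frac32$, so after integrating it is $\le T_\ast \cdot M_0^{A+2} \re^{(\lambda - \frac32) t_0 j}$, possibly using $h_0 \le 1$ and $k$ large to kill the factor of $K$.
\end{itemize}
Collecting everything gives \eqref{eqn:lower bound}.

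The main obstacle is the bookkeeping of exponents in the error terms, especially the $B(u)$ term and making $K$ disappear. The plan is to let $k$ be large so that every $K$-dependent term is swamped either by the gap $\eta - \eta_2$ or by the gap between $\frac\lambda2 - \delta$ and $\lambda - \frac32$.
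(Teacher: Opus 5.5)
Your plan follows the paper's proof closely. You choose a good left endpoint $t_0-h_0\in\cT$ with $h_0\sim T_\ast$ and get norm control from Proposition~\ref{prop:local theory}\,(i); running it in $X^s_{j_k}(\lambda,t_0)$ and using monotonicity in the time parameter is a harmless variant. You then integrate \eqref{eqn:convexity} twice, use the $\cT$-bound on the first derivative, and absorb $CK^4\re^{\lambda_J t}$ via $\eta_2<\eta$. However, the step handling $\re^{-\delta tJ}\norm{B(u(t))}{X_J^s(\lambda,t)}^2$ fails as written. The inequality $\frac\lambda2+\tau-\delta<\lambda-\frac32$ is equivalent to $\delta>\frac{3-\lambda}2+\tau>\frac12$. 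This is impossible because $\delta<\delta_0<\frac13$, which is already forced by \eqref{eqn:r'' bd param}, and the hypothesis $2-\lambda<\delta$ does not imply it. Moreover, if you use only $h_0^2\le h_0\lesssim T_\ast$, the resulting error $T_\ast(M_0^4+KM_0^3)\re^{(\frac\lambda2+\tau-\delta)t_0j_k}$ grows exponentially faster than \emph{both} terms on the right-hand side of \eqref{eqn:lower bound}. So no choice of $k$ rescues it.

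The fix is to keep both powers of $h_0$. The main term and the $B$-error each carry a factor $h_0^2$, so you only need $\frac\lambda2+\tau-\delta<\lambda-1-\eta_2$, with $\tau$ the parameter in Lemma~\ref{lem:Z>E}. This reads $2-\lambda+2\tau+2\eta_2<2\delta$, which is exactly what $2-\lambda<\delta$ together with \eqref{eqn:param2} provides. The paper does this with $\frac\delta{100}$ in Lemma~\ref{lem:Z>E}, using $\lambda-1-\eta_2>\frac\lambda2-\frac{99}{100}\delta$. It then takes $k$ large (depending on $K$, $M_0$) to absorb $C(M_0^4+KM_0^3)$ into half of the main term. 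Alternatively, one factor $h_0\lesssim\re^{-(\frac\lambda4+\tau)t_0j_k}$ moves the error into the $T_\ast$-term, which needs $\frac34(2-\lambda)<\delta+2\eta_1$. Either way, it is the exponential smallness of $h_0$, not $h_0\le1$, that does the work.

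Two smaller points:
\begin{itemize}
\item Fix $t_1$ in Proposition~\ref{prop:local theory}\,(ii) independently of $k$ (e.g.\ $t_1=\frac{t_0+T_0}2$), since its constant depends on $t_1$.
\item ``Apply (i) twice'' would not cover an interval of length $2T_\ast$: the second application starts from norm $2C_0M_0$ and guarantees at most $\frac49T_\ast$ more. So rescaling $c$ --- equivalently choosing $h_0\in[\frac12T_\ast,T_\ast]$, as the paper does --- is the correct option.
\end{itemize}
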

\begin{proof}
	Since $t_0$ is a Lebesgue point of $\cT$, there exists $h_\ast \in (0, \frac{t_0 - T_0}2]$ such that $|\cT \cap [t_0 - h, t_0 - \frac12 h]| \ge \frac14 h$ for all $0 < h \le h_\ast$. Let
	\begin{equation*}
		C_1 \coloneq \max\Bigl\{ 1, \sup_{t' \in [t_0/2,t_0]} \sum_{j \ge 1} |A_j| \re^{(\lambda - 2 + \eta_0) t' j} \Bigr\},
		\qquad
		C_0 \coloneq (8 C_1)^{\frac12},
	\end{equation*}
	where $C_1 < \infty$ since $|A_j| \lesssim j^{\frac d2}$ and $\lambda - 2 + \eta_0 < 0$ by \eqref{eqn:param2}. If $t' \in \cT \cap [\frac{t_0}2, t_0]$, then, by \eqref{eqn:good times}, for every $J \ge 1$ we have
	\begin{equation} \label{eqn:deterministic bd}
		\norm{ u(t') }{ X_J^s(\lambda,t') }^2
		\le \sum_{j \ge 1} \re^{\lambda t' j} \norm{ P_j u(t') }{ \dot H^s }^2
		\le C_1 M_0^2.
	\end{equation}
	For $k$ large, $T_{\ast} \le h_\ast$, so that $| \cT \cap [t_0 - T_{\ast}, t_0 - \frac12 T_{\ast}] | \ge \frac14 T_{\ast} > 0$, and we can choose $h_0 = h_{0,k} \in [\frac12 T_{\ast}, T_{\ast}]$ with $t_0 - h_0 \in \cT$. Note that $t_0 - h_0 \ge \frac{t_0 + T_0}2 \ge \frac{t_0}2$, and that $h_0 \cdot j_k \le T_{\ast} j_k \to 0$ as $k \to \infty$. We apply Proposition~\ref{prop:local theory}\,(i) to $u$ on $[t_0 - h_0, T]$, with $J = j_k$ and with $t_0 - h_0$ in place of $t_0$ in the norm. Since the constant $c$ is the same for all times in $[\frac{t_0}2, t_0]$, and $\re^{-(\frac\lambda4 + \tau)(t_0 - h_0) j_k} \ge \re^{-(\frac\lambda4 + \tau) t_0 j_k}$, the time of existence given by this proposition is at least $T_{\ast} \ge h_0$, by \eqref{eqn:deterministic bd} and since $C_1^{\frac12} \le C_0$. Hence,
	\begin{equation*}
		\norm{ u(t) }{ X_{j_k}^s(\lambda,t_0 - h_0) }
		\le 2 \norm{ u(t_0 - h_0) }{ X_{j_k}^s(\lambda,t_0 - h_0) }
		\le 2 C_1^{\frac12} M_0
		\qquad \text{for all } t \in [t_0 - h_0, t_0].
	\end{equation*}
	For $t \in [t_0 - h_0, t_0]$ and $k$ large, the weights of $X_{j_k}^s(\lambda,t)$ and $X_{j_k}^s(\lambda,t_0 - h_0)$ differ by a factor at most $\re^{\lambda h_0 {j_k}} \le 2$. We conclude that
	\begin{equation} \label{eqn:X bd on interval}
		\norm{ u(t) }{ X_{j_k}^s(\lambda,t) }^2
		\le 8 C_1 M_0^2 = (C_0 M_0)^2 \qquad \text{for all } t \in [t_0 - h_0, t_0].
	\end{equation}
	We apply Proposition~\ref{prop:local theory}\,(ii) with $t_1 = \frac{t_0 + T_0}2$ and $j = J = j_k$, and we integrate the resulting bound \eqref{eqn:convexity} twice from $t_0 - h_0$ to $t_0$, using that $\int_a^b \int_a^t f(r) \, \rd r \, \rd t = \int_a^b (b - t) f(t) \, \rd t$. This gives
	\begin{equation} \label{eqn:r'' bd int'd}
		\begin{aligned}
			\re^{\lambda t_0 j_k} \norm{ P_{j_k} u(t_0) }{ \dot H^s }^2
			& \ge \re^{\lambda (t_0 - h_0) j_k} \norm{ P_{j_k} u(t_0 - h_0) }{ \dot H^s }^2 + \frac{\rd}{\rd t} \bigl( \re^{\lambda t j_k} \norm{ P_{j_k} u(t) }{ \dot H^s }^2 \bigr) \big|_{t = t_0 - h_0} h_0 \\
			& \hskip 15pt + \int_{t_0 - h_0}^{t_0} (t_0 - t) \re^{\lambda t j_k} \norm{ P_{j_k} B(u(t)) }{ \dot H^s }^2 \, \rd t \\
			& \hskip 15pt - \int_{t_0 - h_0}^{t_0} (t_0 - t) \bigl( j_k^2 \re^{\lambda t j_k} \norm{ P_{j_k} u(t) }{ \dot H^s }^2 + \re^{ - \delta t j_k} \norm{ B(u(t)) }{ X_{j_k}^s(\lambda,t) }^2 \\
			& \hskip 60pt + \norm{ u(t) }{ X_{j_k}^s(\lambda,t) }^{A} + C K^4 \re^{ \lambda_{j_k} t }\bigr) \, \rd t.
		\end{aligned}
	\end{equation}
	We estimate the terms in turn. Note $\re^{\lambda (t_0 - h_0) j_k} \norm{ P_{j_k} u(t_0 - h_0) }{ \dot H^s }^2 \ge 0$, and, since $t_0 - h_0 \in \cT$,
	\begin{equation} \label{eqn:deriv lb}
		\frac{\rd}{\rd t} \bigl( \re^{\lambda t j_k} \norm{ P_{j_k} u(t) }{ \dot H^s }^2 \bigr) \big|_{t = t_0 - h_0} h_0 \ge - M_1^2 |A_{j_k}| \re^{(\lambda - \frac32 + \eta_1)(t_0 - h_0)j_k} h_0.
	\end{equation}
	By \eqref{eqn:X bd on interval},
	\[\re^{\lambda t j_k} \norm{ P_{j_k} u(t) }{ \dot H^s }^2 \le \norm{ u(t) }{ X_{j_k}^s(\lambda,t) }^2 \le (C_0 M_0)^2\]
	for $t \in [t_0 - h_0, t_0]$. Since $C_0 M_0 \ge 1$,
	\begin{equation} \label{eqn:error bd}
		\int_{t_0 - h_0}^{t_0} (t_0 - t) \bigl( j_k^2 \re^{\lambda t j_k} \norm{ P_{j_k} u(t) }{ \dot H^s }^2 + \norm{ u(t) }{ X_{j_k}^s(\lambda,t) }^{A} \bigr) \, \rd t
		\le j_k^2 (C_0 M_0)^{A + 2} h_0^2.
	\end{equation}
	For the next two error terms, let $t \in [t_0 - h_0, t_0]$. By Lemma~\ref{lem:Z>E} with $\frac\delta{100}$ in place of $\tau$ and with $[t_1,t_2] = [\frac{t_0}2, t_0]$, since $\norm{ u(t) }{ \dot H^{s+2} } \le K$, and by \eqref{eqn:X bd on interval}, we have
	\[\norm{ B(u(t)) }{ X_{j_k}^s(\lambda,t) }^2 \le C ( M_0^4 + K M_0^3 ) \re^{ (\frac\lambda2 + \frac\delta{100}) t j_k},\]
	where $C$ depends on $d$, $s$, $\delta$, $C_0$ and $t_0$. Hence,
	\begin{equation} \label{eqn:Z survives and Z>E}
		\begin{aligned}
			& \int_{t_0 - h_0}^{t_0} (t_0 - t) \bigl( \re^{-\delta t j_k} \norm{ B(u(t)) }{ X_{j_k}^s(\lambda,t) }^2 + \re^{\lambda_{j_k} t} \bigr) \, \rd t \\
			& \le C ( M_0^4 + K M_0^3 ) h_0^2 \re^{ (\frac\lambda2 + \frac\delta{100}) t_0 j_k - \delta (t_0 - h_0) j_k} + h_0^2 \re^{\lambda_{j_k} t_0}.
		\end{aligned}
	\end{equation}

	For the integral in the second line of \eqref{eqn:r'' bd int'd} whose integrand is nonnegative, since $\lambda - 1 - \eta_2 > 0$, and since $t \in \cT$ implies $\re^{\lambda t j_k} \norm{ P_{j_k} B(u(t)) }{ \dot H^s }^2 \ge |A_{j_k}| \re^{(\lambda - 1 - \eta_2) t j_k}$ for $k \ge k_0$, we have
	\begin{equation} \label{eqn:Zjk lb}
		\begin{aligned}
			\int_{t_0 - h_0}^{t_0} (t_0 - t) \re^{\lambda t j_k} \norm{ P_{j_k} B(u(t)) }{ \dot H^s }^2 \, \rd t
			& \ge |A_{j_k}| \re^{(\lambda - 1 - \eta_2)(t_0 - h_0) j_k} \int_{t_0 - h_0}^{t_0} (t_0 - t) \1_{t \in \cT} \, \rd t \\
			& \ge \tfrac1{8} |A_{j_k}| \re^{(\lambda - 1 - \eta_2)(t_0 - h_0) j_k} h_0^2.
		\end{aligned}
	\end{equation}
	To obtain the first line we used nonnegativity of the integrand, and for the second we integrate on $[t_0 - h_0, t_0 - \frac12 h_0]$, using $| \cT \cap [t_0 - h_0,t_0 - \frac12 h_0] | \ge \frac14 h_0$.
	Inserting $\re^{\lambda (t_0 - h_0) j_k} \norm{ P_{j_k} u(t_0 - h_0) }{ \dot H^s }^2 \ge 0$ and the bounds \eqref{eqn:deriv lb}, \eqref{eqn:error bd}, \eqref{eqn:Z survives and Z>E} and \eqref{eqn:Zjk lb} into \eqref{eqn:r'' bd int'd}, we obtain, for $k$ large, and for constants $c' > 0$ and $C \ge 1$ which do not depend on $k$, and which may change from line to line,
	\begin{align*}
		& \re^{\lambda t_0 j_k} \norm{ P_{j_k} u(t_0) }{ \dot H^s }^2 \\
		& \ge \tfrac18 |A_{j_k}| \re^{(\lambda - 1 - \eta_2)(t_0 - h_0) j_k} h_0^2
		- C h_0^2 \bigl( \re^{ (\frac\lambda2 + \frac\delta{100}) t_0 j_k - \delta (t_0 - h_0) j_k} + \re^{\lambda_{j_k} t_0} \bigr) \\
		& \hskip 15pt - M_1^2 |A_{j_k}| \re^{(\lambda - \frac32 + \eta_1)(t_0 - h_0)j_k} h_0 - j_k^2 (C_0 M_0)^{A + 2} h_0^2 \\
		& \ge c' T_\ast^2 \re^{(\lambda - 1 - \eta_2) t_0 j_k} - C T_\ast (M_0^{A+2} + M_1^2) \re^{(\lambda - \frac32 + 2\eta_1) t_0 j_k},
	\end{align*}
	where we used that $\frac12 T_\ast \le h_0 \le T_\ast$, that $h_0 j_k \to 0$ and $h_0 \le 1$, and
	\[ \lambda - 1 - \eta_2 > \max \bigl\{\tfrac \lambda 2 - \tfrac{99}{100} \delta, \lambda - 1 - \eta \bigr\}, \qquad \lambda - \tfrac32 + 2\eta_1 > 0,  \]
	which follow from the choice of parameters in \eqref{eqn:param1}--\eqref{eqn:param2}, since $\lambda > \lambda_0 \ge \frac32$, and the fact that $1 \le |A_{j_k}| \les |\{ n \in \ZZ^d: |n| \le 2j_k^2\}|$, which is polynomial in $j_k$. This proves \eqref{eqn:lower bound}.
\end{proof}
As a consequence, solutions to \eqref{eqn:NSE} cannot be too smooth too often. This is key to proving Theorem~\ref{thm:main precise}.
\begin{cor} \label{lem:soln not too smooth}
	Let $0 \le T_0 < T$, and  $u \colon [T_0,T] \times \TT^d \to \RR^d$ be a solution to \eqref{eqn:NSE} on $[T_0,T]$, and let $M_0 \ge 1$, $M_1 \ge 0$ and $k_0 \ge 1$. Then the set of times $\cT_{M_0,M_1,k_0}$ defined in \eqref{eqn:good times} cannot contain any of its Lebesgue points, and therefore must have zero Lebesgue measure.
\end{cor}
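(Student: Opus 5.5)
We argue by contradiction: suppose $t_0 \in \cT \coloneq \cT_{M_0,M_1,k_0}$ is a Lebesgue point of $\cT$ with $T_0 < t_0 < T$. (The endpoints $T_0$ and $T$ form a null set and can be discarded, so this suffices for the measure statement.) Since $u$ is a mild solution on $[T_0,T]$ and $\sigma > \frac d2 + 6$, the norm $\norm{ u }{C([T_0,T];\dot H^\sigma)}$ is finite, so we may fix $K \ge 1$ bounding it. We then apply Proposition~\ref{prop:lower bound}. For all $k$ large enough, this gives \eqref{eqn:lower bound} with $T_\ast = T_\ast(C_0 M_0, j_k) = c(1 + C_0 M_0)^{-2} \re^{-(\frac\lambda4 + \tau) t_0 j_k}$.

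On the other hand, $t_0 \in \cT$, so the first condition in \eqref{eqn:good times} with $j = j_k$ gives the upper bound
\[
	\re^{\lambda t_0 j_k} \norm{ P_{j_k} u(t_0) }{ \dot H^s }^2 \le M_0^2 |A_{j_k}| \re^{(\lambda - 2 + \eta_0) t_0 j_k}.
\]
It remains to compare exponents in \eqref{eqn:lower bound}, after absorbing constants depending on $M_0$, $M_1$, $c$, $C_0$ and $t_0$.

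The main term is of order $T_\ast^2 \re^{(\lambda-1-\eta_2)t_0 j_k} \sim \re^{(\lambda - 1 - \eta_2 - \frac\lambda2 - 2\tau) t_0 j_k} = \re^{(\frac\lambda2 - 1 - \eta_2 - 2\tau) t_0 j_k}$. The error term is $\lesssim T_\ast \re^{(\lambda - \frac32 + 2\eta_1) t_0 j_k} \sim \re^{(\frac{3\lambda}4 - \frac32 + 2\eta_1 - \tau) t_0 j_k}$. The main exponent minus the error exponent is
\[
	\tfrac12 - \tfrac\lambda4 - \eta_2 - 2\eta_1 - \tau = \tfrac14(2 - \lambda) - \eta_2 - 2\eta_1 - \tau,
\]
which is positive by \eqref{eqn:param2}. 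So for large $k$ the right-hand side of \eqref{eqn:lower bound} is at least $\frac{c'}2 T_\ast^2 \re^{(\lambda - 1 - \eta_2) t_0 j_k}$.

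Comparing with the upper bound, we need the main exponent to beat $\lambda - 2 + \eta_0$ by more than the polynomial factor $|A_{j_k}|$. The difference is
\[
	\bigl(\tfrac\lambda2 - 1 - \eta_2 - 2\tau\bigr) - (\lambda - 2 + \eta_0) = 1 - \tfrac\lambda2 - \eta_2 - 2\tau - \eta_0 = \tfrac12(2-\lambda) - \eta_0 - \eta_2 - 2\tau,
\]
which is again positive by \eqref{eqn:param2}. Since $|A_{j_k}| \lesssim j_k^{d/2}$ grows only polynomially, it is beaten by this exponential gap. Hence for $k$ large the lower bound strictly exceeds $M_0^2|A_{j_k}|\re^{(\lambda-2+\eta_0)t_0 j_k}$, a contradiction. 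So no point of $\cT$ is a Lebesgue point of $\cT$.

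By the Lebesgue density theorem, almost every point of a measurable set is a Lebesgue point of it, so $|\cT| = 0$. This step needs $\cT$ to be Lebesgue measurable. That follows because $t \mapsto u(t)$ is continuous into $\dot H^\sigma$, $t\mapsto \frac{\rd}{\rd t}(\re^{\lambda t j}\norm{P_j u(t)}{\dot H^s}^2)$ is continuous, and $B(u(t))$ depends continuously on $t$. Each defining condition in \eqref{eqn:good times} is therefore a closed condition in $t$, and $\cT$ is a countable intersection of closed sets, hence closed.

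The only delicate step is the exponent bookkeeping, which rests entirely on the parameter choice \eqref{eqn:param2}. The crucial gain is the factor $\frac14$ coming from the time scale $\re^{-\frac\lambda4 t_0 j_k}$ of Proposition~\ref{prop:local theory}\,(i), as opposed to $\re^{-\frac12 t_0 j_k}$.
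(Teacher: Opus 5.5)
Your proof is correct and follows the paper's argument. You take a Lebesgue point $t_0 \in \cT \cap (T_0,T)$ and combine Proposition~\ref{prop:lower bound}, with $T_\ast = c(1+C_0M_0)^{-2}\re^{-(\frac\lambda4+\tau)t_0 j_k}$, with the upper bound from the first condition in \eqref{eqn:good times} at $j=j_k$; comparing exponents via \eqref{eqn:param2} then gives the contradiction. Your explicit computation of the two exponent gaps and your observation that $\cT$ is closed, so that the Lebesgue density theorem applies, are details the paper leaves implicit, and both are correct.
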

\begin{proof}
	Assume, for the sake of contradiction, that $\cT \coloneq \cT_{M_0,M_1,k_0}$ contains one of its Lebesgue points, say $t_0 \in \cT \cap (T_0,T)$. Let $K \coloneq \max\{1, \norm{ u }{ C([T_0,T];\dot H^\sigma) }\}$, and let $C_0$, $c'$, $C$ and $T_\ast = T_\ast(C_0 M_0, j_k)$ be as in Proposition~\ref{prop:lower bound}. By \eqref{eqn:good times}, $\re^{\lambda t_0 j_k} \norm{ P_{j_k} u(t_0) }{ \dot H^s }^2 \le M_0^2 |A_{j_k}| \re^{(\lambda - 2 + \eta_0)t_0 {j_k}}$. Together with Proposition~\ref{prop:lower bound}, this implies
	\begin{equation*}
		M_0^2 |A_{j_k}| \re^{ (\lambda - 2 + \eta_0) t_0 j_k}
		\ge c' T_\ast^2 \re^{(\lambda - 1 - \eta_2) t_0 j_k} - C T_\ast (M_0^{A+2} + M_1^2) \re^{(\lambda - \frac32 + 2\eta_1) t_0 j_k}
	\end{equation*}
	for $k$ sufficiently large. By \eqref{eqn:T ast}, $T_\ast = c_\ast \re^{-(\frac\lambda4 + \tau) t_0 j_k}$, where $c_\ast \coloneq c (1 + C_0 M_0)^{-2} > 0$ does not depend on $k$. Therefore, we obtain
	\[M_0^2 |A_{j_k}| \re^{ (\lambda - 2 + \eta_0) t_0 j_k}
		\ge c' \re^{(\lambda/2 - 1 - \eta_2 -2\tau) t_0 j_k} - C (M_0^{A+2} + M_1^2) \re^{(\frac34 \lambda - \frac32 + 2\eta_1 -\tau) t_0 j_k},
 \]
 for some different $c', C > 0$.
 By taking $k \to \infty$, and since $|A_{j_k}| \lesssim j_k^{\frac d2}$, we obtain that
 \[\lambda/2 - 1 - \eta_2 -2\tau \le \max \bigl\{ \lambda - 2 + \eta_0, \tfrac34 \lambda - \tfrac32 + 2\eta_1 -\tau \bigr\}, \]
 which is a contradiction with our choice $\lambda < 2$ and $\eta_0, \eta_1, \eta_2, \tau$ small enough in \eqref{eqn:param2}.
\end{proof}

\subsection{Proof of singularity} \label{subsec:pf pf}
In this subsection, we assemble the results of the previous sections to prove our main result and Corollary~\ref{cor:extension}.
We first point out that we will use several measurability properties within the proofs of Theorem~\ref{thm:main precise} and Corollary~\ref{cor:extension}, which we collect here. For $u_0 \in \dot H^\sigma(\TT^d)$, let $T^\ast(u_0) \coloneq \sup \{ t \ge 0 \colon u_0 \in \WP(t) \} \in (0,\infty]$ denote the maximal existence time, and recall the sets $\cG_{t;\vec M,k_0}$ defined in \eqref{eqn:GMk0 defn}, and $\cG_t = \bigcup_{\vec M, k_0} \cG_{t;\vec M,k_0}$. We use the following facts.
\begin{enumerate}[label=(M\arabic*)]
	\item \label{item:M1} $\dot H^\sigma(\TT^d)$ is a Borel subset of $\cD'$, and its Borel $\sigma$-algebra coincides with the trace on $\dot H^\sigma$ of the Borel $\sigma$-algebra of $\cD'$. Moreover, for every $t > 0$, $\vec M$ and $k_0$, the sets $\cG_{t;\vec M,k_0}$ and $\cG_t$ are Borel subsets of $\cD'$.
	\item \label{item:M3} For every $t \ge 0$, the set $\WP(t)$ is a Borel subset of $\cD'$, and the map $\Phi_t \colon \WP(t) \to \dot H^\sigma$ is Borel measurable. In particular, $\nu_t = (\Phi_t)_\#(\1_{\WP(t)} \, \nu_0)$ is a well-defined Borel measure on $\cD'$.
	\item \label{item:M4} For every $s_0 \ge 0$, $\vec M$ and $k_0$, the set
	\begin{equation*}
		E_{s_0;\vec M,k_0}
		\coloneq \bigl\{ (t,u_0) \in (0,\infty) \times \cD' \colon u_0 \in \WP(t), \ \Phi_t(u_0) \in \cG_{s_0 + t;\vec M,k_0} \bigr\}
	\end{equation*}
	is measurable with respect to the product $\sigma$-algebra $\mathcal{B}((0,\infty)) \otimes \mathcal{B}(\cD')$.
	\item \label{item:M5} For every $s_0 \ge 0$, $\vec M$ and $k_0$, the maps $t \mapsto \nu_t(\cG_{s_0 + t;\vec M,k_0})$ and $t \mapsto \nu_t(\cG_{s_0 + t})$ are Borel measurable on $(0,\infty)$.
\end{enumerate}
All of the above are well-known, or standard consequences of the locally uniform continuity of the flow up to the time of existence, together with the blowup alternative
\[ \lim_{t \uparrow T^*} \|u(t)\|_{\dot H^\sigma}  = \infty\]
if $T^* < \infty$ (see e.g.\ \cite{RRS16} for $d = 3$, and the discussion in Subsection~\ref{subsec:notation}), so we will use them without further comments.
\begin{proof}[Proof of Theorem~\ref{thm:main precise}]
	Fix $s_0 \ge 0$ and $T > 0$; it suffices to show that $|\cN_{s_0} \cap (0,T)| = 0$. Let $t \in \cN_{s_0}$. Since $\mu_{s_0 + t}(\cG_{s_0 + t}) = 1$ by Proposition~\ref{prop:Gaussian}, and $\nu_t \not\perp \mu_{s_0+t}$, we have $\nu_t(\cG_{s_0 + t}) > 0$. Therefore $\cN_{s_0} \subseteq \{ t > 0 \colon \nu_t(\cG_{s_0 + t}) > 0 \}$. Consequently, it is enough to show that
	\begin{equation*}
		\int_0^T \nu_t(\cG_{s_0 + t}) \, \rd t = 0,
	\end{equation*}
	where the integrand is measurable by \ref{item:M5}. Since $\cG_{s_0 + t}$ is the increasing union of the sets $\cG_{s_0 + t;\vec M,k_0}$, by monotone convergence it suffices to show that $\int_0^T \nu_t(\cG_{s_0 + t;\vec M,k_0}) \, \rd t = 0$ for every fixed $\vec M = (M_0,M_1)$ with $M_0 \ge 1$ and every fixed $k_0 \ge 1$. By definition of $\nu_t$, \ref{item:M4} and Tonelli's theorem,
	\begin{equation} \label{eqn:fub trick}
		\begin{aligned}
			\int_0^T \nu_t \bigl( \cG_{s_0 + t;\vec M,k_0} \bigr) \, \rd t
			& = \int_0^T \int \1_{\WP(t)}(u_0) \, \1_{\cG_{s_0 + t;\vec M,k_0}} \bigl( \Phi_t(u_0) \bigr) \, \rd \nu_0(u_0) \, \rd t \\
			& = \int \Bigl| \bigl\{ t \in (0, \min\{T, T^\ast(u_0)\}) \colon \Phi_t(u_0) \in \cG_{s_0 + t;\vec M,k_0} \bigr\} \Bigr| \, \rd \nu_0(u_0),
		\end{aligned}
	\end{equation}
	where we used that $\{ t \colon u_0 \in \WP(t) \} = [0, T^\ast(u_0))$. Therefore, it suffices to show that, for every $u_0 \in \dot H^\sigma$ and every $0 < T' < T^\ast(u_0)$, the set
	\begin{equation*}
		\cT' \coloneq \bigl\{ t \in [0,T'] \colon \Phi_t(u_0) \in \cG_{s_0 + t;\vec M,k_0} \bigr\}
	\end{equation*}
	has measure zero; the claim then follows by letting $T' \uparrow \min\{T, T^\ast(u_0)\}$. Fix such $u_0$ and $T'$, and let $v(r) \coloneq \Phi_{r - s_0}(u_0)$ for $r \in [s_0, s_0 + T']$. Since \eqref{eqn:NSE} is autonomous, $v$ is a solution to \eqref{eqn:NSE} on $[s_0, s_0 + T']$ in the sense of Section~\ref{sec:local}. Since $v(s_0 + t) = \Phi_t(u_0)$, the set $s_0 + \cT'$ is the set $\cT_{M_0,M_1,k_0}$ of \eqref{eqn:good times} associated with the solution $v$ on $[T_0,T] = [s_0, s_0 + T']$. By Corollary~\ref{lem:soln not too smooth}, $|s_0 + \cT'| = 0$, and so $|\cT'| = 0$. This concludes the proof.
\end{proof}
Before showing Corollary~\ref{cor:extension}, we need the following lemma.
\begin{lemma} \label{lem:J countable}
Let $\cE$ and $\tilde\Phi$ be as in Corollary~\ref{cor:extension}, let $u_0 \in \cE$, and define the set of times

\begin{equation*}
	\begin{aligned}
		\mathscr J(u_0) \coloneq \bigl\{ t > 0 \colon
		& \tilde\Phi_s(u_0) \notin \WP(t - s) \ \text{for every} \ 0<s<t, \\
		& \text{and there is} \ r > t  \ \text{such that} \\
		& \tilde\Phi_s(u_0) \in \WP(r - s) \ \text{for every} \ t < s < r \bigr\}.
	\end{aligned}
\end{equation*}
That is, the set of times $t$ such that the evolution of $u_0$ under $\tilde\Phi$ is given by the smooth flow $\Phi$ on an interval $(t,r]$, but not on any interval $[t - h, t]$, $h > 0$. Then $\mathscr J(u_0)$ is countable. Moreover, if $t > 0$ is such that $\tilde\Phi_t(u_0) \in \dot H^\sigma$ and $\tilde\Phi_s(u_0) \notin \WP(t - s)$ for every $s \in (0,t)$, then $t \in \mathscr J(u_0)$.
\end{lemma}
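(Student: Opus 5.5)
The plan for countability is to attach to each $t \in \mathscr J(u_0)$ a nonempty open interval $I_t = (t, r_t)$, with $r_t$ the number $r$ from the definition, and then show that these intervals are pairwise disjoint. Disjoint nonempty open subsets of $\RR$ each contain a distinct rational, so this gives countability.

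For disjointness, take $t < t'$ in $\mathscr J(u_0)$ and suppose, towards a contradiction, that $t' < r_t$. Pick any $s$ with $t < s < t'$. Since $s \in (t, r_t)$, the definition of $\mathscr J(u_0)$ at $t$ gives $\tilde\Phi_s(u_0) \in \WP(r_t - s)$. Next we use the elementary monotonicity $\WP(a) \subseteq \WP(b)$ for $b \le a$: restricting a unique mild solution on $[0,a]$ to $[0,b]$ yields a mild solution, and any other solution on $[0,b]$ is unique by the local theory. Because $t' - s < r_t - s$, we get $\tilde\Phi_s(u_0) \in \WP(t' - s)$ with $0 < s < t'$. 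This contradicts the first condition defining $t' \in \mathscr J(u_0)$. Hence $t' \ge r_t$, and so $I_t \cap I_{t'} = \emptyset$.

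For the second claim, let $t$ satisfy the hypotheses and set $v_0 \coloneq \tilde\Phi_t(u_0) \in \dot H^\sigma$. The first condition of $\mathscr J(u_0)$ holds by assumption, so it remains to find $r > t$. By classical local well-posedness in $\dot H^\sigma$ (Fujita--Kato type, as recalled in Subsection~\ref{subsec:notation}), there is $\epsilon > 0$ with $v_0 \in \WP(\epsilon)$. Set $r \coloneq t + \epsilon$ and take $t < s < r$. By \eqref{eqn:consistency} with $s$ replaced by $t$ and $r$ replaced by $s - t$ (which is allowed because $v_0 \in \WP(\epsilon) \subseteq \WP(s-t)$), we have $\tilde\Phi_s(u_0) = \Phi_{s-t}(v_0) = v(s-t)$, where $v$ is the solution on $[0,\epsilon]$. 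The translate $v(s - t + \cdot)$ is a mild solution on $[0, r - s]$ with datum $\tilde\Phi_s(u_0)$, and it is unique by uniqueness of mild solutions in this class, using the semigroup property for Duhamel's formula. Therefore $\tilde\Phi_s(u_0) \in \WP(r-s)$, and so $t \in \mathscr J(u_0)$.

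The main point requiring care is the monotonicity and restriction/translation property of $\WP$, that is, uniqueness of mild solutions on subintervals. Everything else is elementary ordering of intervals. I would justify uniqueness by the standard contraction estimate in $C_T\dot H^\sigma$, since $\sigma > d/2$ makes $\dot H^\sigma$ an algebra.
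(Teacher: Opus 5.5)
Your proposal is correct and follows essentially the same route as the paper. The paper picks a rational $q_t\in(t,r)$ and uses the same inclusion $\WP(q_t-s)\subseteq\WP(t'-s)$ to show $t\mapsto q_t$ is strictly increasing, which is equivalent to your disjoint-intervals argument. It proves the second claim exactly as you do, via local well-posedness at $\tilde\Phi_t(u_0)$, \eqref{eqn:consistency} and uniqueness of mild solutions.
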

\begin{proof}
 For every $t \in \mathscr J(u_0)$, we can pick $q_t \in \QQ$, $q_t > t$ such that the properties of the definition hold with $q_t$ replacing $r$.   Therefore, we can define the map $\iota: \mathscr J(u_0) \to \QQ$ given by $\iota(t) = q_t$. If $t'>t$ are elements of $\mathscr J$, then we must have $t' > q_t$, since otherwise $t' \in (t, q_t]$, and, choosing $t < s < t'$, we would have $\tilde\Phi_s(u_0) \in \WP(q_t - s) \subseteq \WP(t' - s)$, which contradicts the definition of $\mathscr J$. Therefore, the map $\iota$ is increasing on $\mathscr J$, hence injective. Therefore, $\mathscr J$ is countable.

For the second claim, let $v \coloneq \tilde\Phi_t(u_0) \in \dot H^\sigma$. In view of the standard local well-posedness for \eqref{eqn:NSE} in $\dot H^\sigma$, we have $T^\ast(v) > 0$.  Let $r \in (t, t + T^\ast(v))$. If $s \in (t,r)$, then $v \in \WP(r - t)$, so that, by \eqref{eqn:consistency} and the uniqueness of mild solutions, $\tilde\Phi_s(u_0) = \Phi_{s - t}(v) \in \WP(r - s)$. Hence, $t \in \mathscr J(u_0)$.
\end{proof}
Now we are ready to prove Corollary~\ref{cor:extension}.
\begin{proof}[Proof of Corollary~\ref{cor:extension}]
	The argument is analogous to the implication from Proposition 1.6 to Theorem 1.1 in \cite{CT26}.  For $0 < s < t$, let $\cR_{s,t} \coloneq \tilde\Phi_s^{-1}\bigl( \WP(t - s) \bigr)$, i.e.\ the set of initial data $u_0$ such that $\tilde \Phi_s(u_0)$ can be further extended to time $t$ using the smooth flow $\Phi$. By \ref{item:M3}, $\WP(t - s)$ is a Borel subset of $\cD'$, so $\cR_{s,t}$ is a Borel set. If $u_0 \in \cR_{s,t}$ and $s < s' < t$, then, by \eqref{eqn:consistency} and the uniqueness of mild solutions, $\tilde\Phi_{s'}(u_0) = \Phi_{s' - s}(\tilde\Phi_s(u_0)) \in \WP(t - s')$, that is, $u_0 \in \cR_{s',t}$. Therefore, the set $\cR_t \coloneq \bigcup_{s \in (0,t)} \cR_{s,t} = \bigcup_{s \in \QQ \cap (0,t)} \cR_{s,t}$ is a Borel set.
Informally, this is the set of initial data $u_0$ such that the solution $u$ is smooth on a short interval $[t-h, t]$, where $h$ is allowed to depend on $u_0$. Moreover, if $u_0 \in \cR_{s,t}$, then $T^\ast(\tilde\Phi_s(u_0)) > t - s$, since $\Phi_{t-s}(\tilde\Phi_s(u_0)) \in \dot H^\sigma$ can be further evolved by the local theory, and so $u_0 \in \cR_{s,q}$ for every $q \in (t, s + T^\ast(\tilde\Phi_s(u_0)))$. Hence,
	\begin{equation} \label{eqn:Rt rect}
		\cR_t = \bigcup_{s, q \in \QQ, \ 0 < s < t < q} \cR_{s,q},
	\end{equation}
	and in particular the set $\{ (t,u_0) \colon t > 0, \ u_0 \in \cR_t \} = \bigcup_{s,q \in \QQ, \ 0 < s < q} (s,q) \times \cR_{s,q}$ is measurable with respect to $\mathcal{B}((0,\infty)) \otimes \mathcal{B}(\cD')$.

	Now fix $s \in \QQ_{>0}$, and let $\rho_s \coloneq (\tilde\Phi_s)_\# \bigl( \1_{\tilde\Phi_s^{-1}(\dot H^\sigma)} \, \nu_0 \bigr)$, which is well defined by \ref{item:M1}. This is a finite measure, concentrated on divergence-free fields in $\dot H^\sigma$. If $\rho_s \ne 0$, Theorem~\ref{thm:main precise}, applied with initial distribution $\rho_s / \rho_s(\cD')$ and with $s_0 = s$, gives a Lebesgue null set $\cN'_s \subseteq (0,\infty)$ such that $(\Phi_r)_\#(\1_{\WP(r)} \, \rho_s) \perp \mu_{s + r}$ for every $r \in (0,\infty) \setminus \cN'_s$; if $\rho_s = 0$, we let $\cN'_s = \emptyset$. By \eqref{eqn:consistency}, for every $t > s$ and every Borel set $A \subseteq \cD'$,
	\begin{equation*}
		\begin{aligned}
			(\Phi_{t-s})_\#(\1_{\WP(t-s)} \, \rho_s)(A)
			& = \nu_0 \bigl( \bigl\{ u_0 \in \cR_{s,t} \colon \Phi_{t-s}\bigl( \tilde\Phi_s(u_0) \bigr) \in A \bigr\} \bigr) \\
			& = (\tilde\Phi_{t})_\#(\1_{\cR_{s,t}} \, \nu_0)(A).
		\end{aligned}
	\end{equation*}
	Hence, $(\tilde\Phi_t)_\#(\1_{\cR_{s,t}} \, \nu_0) \perp \mu_t$ for every $t \in (s,\infty) \setminus (s + \cN'_s)$.
Since
	\[\bigcup_{s\in \QQ\cap(0,t)} \cR_{s,t} = \cR_t,\]
	then, for every
	\[t \not\in \bigcup_{s\in \QQ\cap(0,t)} s + \cN'_s,  \]
	we have that $(\tilde\Phi_t)_\#(\1_{\cR_t} \nu_0) \perp \mu_t$.
	Moreover, since $\mu_t(\dot H^
	\sigma) = 1$ for $t > 0$ (see Section~\ref{sec:gaussian}), we have
	\[(\tilde\Phi_t)_\#((\1-\1_{\tilde \Phi_t^{-1}(\dot H^\sigma)}) \nu_0) \perp \mu_t\]
	for every $t > 0$.
It remains to consider the Borel set (see \ref{item:M1})
	\begin{equation*}
		\cS_t \coloneq \tilde\Phi_t^{-1}(\dot H^\sigma) \setminus \cR_t
	\end{equation*}
	of data whose evolution is in $\dot H^\sigma$ at time $t$, but is not smooth on any interval $[t - h, t]$, $h > 0$. For $t > 0$, let
	\begin{equation*}
		\mathscr J_t
		\coloneq \bigl\{ u_0 \in \cE \colon t \in \mathscr J(u_0) \bigr\}
		= \bigcup_{q \in \QQ \cap (t,\infty)} \Bigl( \bigcap_{s \in \QQ \cap (t,q)} \cR_{s,q} \Bigr) \setminus \cR_t.
	\end{equation*}
	Here, in the second equality, we used that, if $\tilde\Phi_s(u_0) \in \WP(r - s)$ for every $s \in (t,r)$, then the same holds with $r$ replaced by any $q \in (t,r)$, and that $\cR_{s',q} \subseteq \cR_{s,q}$ for $s' < s < q$, so that it suffices to consider rational $q$ and $s$. By Lemma~\ref{lem:J countable}, $\cS_t \subseteq \mathscr J_t$. Moreover, the set $\{ (t,u_0) \colon t > 0, \ u_0 \in \mathscr J_t \}$ is obtained from the sets $(0,s) \times \cD'$, $(0,q) \times \cD'$, $(0,\infty) \times \cR_{s,q}$, $s, q \in \QQ$, and $\{ (t,u_0) \colon t > 0, \ u_0 \in \cR_t \}$ by countably many unions, intersections and differences, so it is measurable with respect to $\mathcal{B}((0,\infty)) \otimes \mathcal{B}(\cD')$ by \eqref{eqn:Rt rect}, and its sections $\mathscr J(u_0)$ are countable by Lemma~\ref{lem:J countable}. Therefore, by Tonelli's theorem,
	\begin{equation*}
		\int_0^\infty \nu_0(\mathscr J_t) \, \rd t
		= \int \bigl| \mathscr J(u_0) \bigr| \, \rd \nu_0(u_0)
		= 0,
	\end{equation*}
	and so $\nu_0(\cS_t) \le \nu_0(\mathscr J_t) = 0$ for almost every $t > 0$.

	Finally, by \eqref{eqn:consistency}, $\cR_t \subseteq \tilde\Phi_t^{-1}(\dot H^\sigma)$, so that $\cE$ is the disjoint union of $\cR_t$, $\cS_t$ and $\cE \setminus \tilde\Phi_t^{-1}(\dot H^\sigma)$. For almost every $t > 0$ we have $t \notin \bigcup_{s \in \QQ_{>0}} (s + \cN'_s)$ and $\nu_0(\cS_t) = 0$, and therefore
	\begin{equation*}
		(\tilde\Phi_t)_\# \nu_0
		= (\tilde\Phi_t)_\# ( \1_{\cR_t} \, \nu_0 ) + (\tilde\Phi_t)_\# \bigl( (\1 - \1_{\tilde\Phi_t^{-1}(\dot H^\sigma)}) \, \nu_0 \bigr)
	\end{equation*}
	is the sum of two measures which are singular with respect to $\mu_t$. Hence, $(\tilde\Phi_t)_\# \nu_0 \perp \mu_t$ for almost every $t > 0$, which concludes the proof.

\end{proof}

\end{document}